\documentclass{amsart}
\usepackage{amsthm,amssymb}
\usepackage{graphicx}
\usepackage{pgf,tikz}


\newtheorem{theorem}{Theorem}[section]
\newtheorem{corollary}[theorem]{Corollary}
\newtheorem{lemma}[theorem]{Lemma}
\newtheorem{proposition}[theorem]{Proposition}

\theoremstyle{definition}
\newtheorem{definition}[theorem]{Definition}
\newtheorem{remark}[theorem]{Remark}
\newtheorem{example}[theorem]{Example}
\newtheorem{notation}[theorem]{Notation}

\setcounter{MaxMatrixCols}{20}


\newcommand{\F}{{\mathbb{F}}}
\newcommand{\X}{{\mathbb{X}}}
\newcommand{\Y}{{\mathbb{Y}}}
\newcommand{\V}{{\mathbb{V}}}
\newcommand{\W}{{\mathbb{W}}}
\newcommand{\Z}{{\mathbb{Z}}}
\newcommand{\N}{{\mathbb{N}}}

\renewcommand{\P}{{\mathbb{P}}}


\newcommand{\popo}{\mathbb{P}^1\times\mathbb{P}^1}

\newcommand{\supp}{\operatorname{Supp}}

\newcommand{\HF}{\operatorname{HF}}
\newcommand{\im}{\operatorname{im}}

\newcommand{\svdots}{\raisebox{3pt}{\scalebox{0.75}{$\vdots$}}}
\newcommand{\sddots}{\raisebox{3pt}{\scalebox{0.75}{$\ddots$}}}

\begin{document}

\title{K\"{a}hler Differentials for Fat Point Schemes
in~$\popo$}

\author{Elena Guardo}
\address{Dipartimento di Matematica e Informatica\\
Viale A. Doria, 6 \\
95100 - Catania, Italy}
\email{guardo@dmi.unict.it}

\author{Martin Kreuzer}
\address{Fakult\"{a}t f\"{u}r Informatik und Mathematik \\
Universit\"{a}t Passau, D-94030 Passau, Germany}
\email{martin.kreuzer@uni-passau.de}

\author{Tran N. K. Linh}
\address{Department of Mathematics\\
Hue University's College of Education\\
34 Le Loi, Hue, Vietnam}
\email{tnkhanhlinh141@gmail.com}

\author{Le Ngoc Long}
\address{Fakult\"{a}t f\"{u}r Informatik und Mathematik \\
Universit\"{a}t Passau\\
D-94030 Passau, Germany,\ \
\textrm{and} Department of Mathematics \\
Hue University's College of Education\\
34 Le Loi, Hue, Vietnam}
\email{nglong16633@gmail.com}

\date{\today}
\subjclass{Primary 13N05, Secondary
13D40,14N05,13C40} 
\keywords{fat point scheme, ACM fat point scheme,
separators, K\"{a}hler differentials, K\"{a}hler different,
Hilbert function, complete intersection}

\begin{abstract}
Let $\X$ be a set of $K$-rational points in~$\popo$
over a field~$K$ of characteristic zero,
let $\Y$ be a fat point scheme supported at~$\X$,
and let $R_{\Y}$ be the bihomogeneus coordinate ring of~$\Y$.
In this paper we investigate the module of K\"{a}hler
differentials $\Omega^1_{R_{\Y}/K}$.
We describe this bigraded $R_\Y$-module
explicitly via a homogeneous short exact sequence and
compute its Hilbert function in a number of special cases,
in particular when the support~$\X$ is a complete intersection
or an almost complete intersection in $\popo$.
Moreover, we introduce a K\"ahler different for~$\Y$
and use it to characterize ACM reduced schemes
in~$\popo$ having the Cayley-Bacharach property.
\end{abstract}

\maketitle

%
%

\section{Introduction}

It is a well-known fact that, over a field~$K$ of characteristic zero, 
the solutions of the Hermite interpolation problem for polynomials 
in two indeterminates correspond to the elements of the vanishing ideal 
of a fat point scheme in the plane. Here a 0-dimensional scheme~$\mathbb{Y}$
in $\mathbb{P}^2$ is called a {\it fat point scheme}\/ if its vanishing 
ideal~$I_{\mathbb{Y}}$ is of the form 
$$
I_{\Y} \; = \; I_{P_1}^{m_1} \cap \cdots \cap I_{P_r}^{m_r}
$$
with $m_i\in\mathbb{N}_+$ and points $P_i$ in $\mathbb{P}^2$.
The Hilbert function of $K[X_0,X_1,X_2]/I_{\Y}$ measures the number
of linearly independent solutions of the interpolation problem.
This is one of the reasons why Hilbert functions of fat point schemes
in~$\mathbb{P}^2$ have received a lot of attention. In spite of these
efforts, a complete classification of these Hilbert functions has been 
achieved only in some very special cases (such as the theorem of
J.\ Alexander and A.\ Hirschowitz for $m_1 = \cdots = m_r=2$, 
cf.~\cite{AH95}).

In another vein, M.\ Nagata conjectured in his 1959 solution of the
14th problem of Hilbert (cf.~\cite{Nag59}) that the initial degree~$d$ 
of~$I_{\Y}$ satisfies $d>(m_1+\cdots + m_r)/\sqrt{r}$ for $r\ge 9$
generically chosen points $P_1,\dots,P_r$
and proved it is some special cases. By blowing up the points
$P_1,\dots,P_r$, one can interpret the values of the Hilbert 
function of~$\Y$ as dimensions of linear series on the blown-up surface.
Thus the Nagata conjecture is related to the Harbourne-Hirschowitz
conjecture on the dimension of these linear systems and
can be extended to the Nagata-Biran conjecture for ample line bundles
on smooth algebraic surfaces (see for instance~\cite{SS04}).

Except for the projective plane, the simplest smooth algebraic
surface is $\popo$. It can be embedded into~$\mathbb{P}^3$ as a smooth
quadric surface, and it is also the simplest non-trivial example
of a multiprojective space $\mathbb{P}^{n_1} \times\cdots \times
\mathbb{P}^{n_k}$. Its bigraded coordinate ring $S=K[X_0,X_1,Y_0,Y_1]$
satisfies $\deg(X_0)=\deg(X_1)=(1,0)$ and $\deg(Y_0)=
\deg(Y_1)=(0,1)$, and the bigraded coordinate ring of a subscheme~$\Y$
of~$\popo$ is of the form $R_{\Y}=S/I_{\Y}$ with a bihomogeneous
ideal $I_{\Y}$ of~$S$. Hence the Hilbert function of~$\Y$ 
is given by an infinite matrix.

So far, not even the Hilbert functions of
reduced 0-dimensional schemes of~$\popo$ have been classified completely,
and the knowledge about Hilbert functions of fat point subschemes
of~$\popo$ is still rudimentary (see~\cite{GV15}, Ch.~6 and~\cite{VT05}).
This is unfortunate, since these Hilbert functions have many applications, e.g., 
in the study of secant varieties (see~\cite{CGG05}), complexity theory
(see~\cite{BCS97}), graphical models (see~\cite{GHKM01}), 
and Bayesian networks (see~\cite{GSS05}). 

To advance the study of fat point schemes in~$\popo$ and their Hilbert
functions further, new tools are needed.
In~\cite{DK99}, G. de Dominicis and the second author
introduced some methods using algebraic differential forms
into the study of 0-dimensional subschemes of~$\mathbb{P}^n$.
More precisely, given a 0-dimensional subscheme~$\X$
of the projective $n$-space $\P^n$ over a field~$K$
of characteristic zero with homogeneous vanishing
ideal~$I_\X$ in $R = K[X_0,\dots,X_n]$
and homogeneous coordinate ring $R_\X = R/I_\X$, let~$J$ be the
kernel of the multiplication map $\mu: R_{\X}\otimes_K R_{\X}
\rightarrow R_{\X}$.
Then the module of {\it K\"{a}hler differentials}
of $R_{\X}/K$ is the $R_{\X}$-module $\Omega^1_{R_{\X}/K} =
J/J^2$. The structure of this module can be described using
the canonical exact sequence
$$
0\rightarrow I^{(2)}_{\X}/I^2_{\X} \rightarrow I_{{\X}}/I^2_{\X}
\rightarrow R^{n+1}_{\X}(-1)\rightarrow
\Omega^1_{R_{\X}/K}\rightarrow 0
$$
which follows from~\cite[Prop.~4.13]{Ku86}.
For instance, if~$\X$ is the complete intersection of
hypersurfaces of degrees $d_1,\dots,d_n$
then it follows that the Hilbert function
of $\Omega^1_{R_{\X}/K}$ is given by
$\HF_{\Omega^1_{R_{\X}/K}}(i)
= (n + 1)\HF_{\X}(i-1)-\sum_{j=1}^n \HF_{\X}(i- d_j)$
for all $i\in \Z$ (see \cite[Prop.~4.3]{DK99}).
Later, in~\cite{KLL15}, these differential algebra techniques
were extended to fat point schemes in~$\mathbb{P}^n$.

In this paper we examine the natural question of whether these
differential algebraic methods can be applied to study
0-dimensional subschemes $\Y$ of~$\popo$. In particular, if
$\Y$ is a fat point scheme, if $S=K[X_0,X_1,Y_0,Y_1]$,
and if $R_{\Y}=S/I_{\Y}$ is the bihomogeneous coordinate
ring of~$\Y$, we show that the module of K\"ahler differentials
$\Omega^1_{R_{\Y}/K}$ contains a significant amount
of information about~$\Y$.

The paper is structured as follows. In  Section~2 we fix the
notation and recall a number of results about 0-dimensional
subschemes of~$\popo$ which we use later on. 
In particular, we recall
the definitions of arithmetically Cohen-Macaulay (ACM) 
subschemes, separators,
minimal separators, and the degree tuple of a set of
minimal separators for a fat point subscheme in~$\popo$.

In Section~3 we introduce the main object of the
study of this paper, namely the K\"{a}hler differential module
$\Omega^1_{R_{\Y}/K}$ for the bihomogeneous coordinate ring~$R_{\Y}$
of a fat point scheme~$\Y$ in~$\popo$. Based on the general theory
in~\cite{Ku86}, one can describe this module via generators and
relations. Our first main result is Theorem~\ref{generpropSec2.5}
which contains a more explicit presentation of the module
$\Omega^1_{R_{\Y}/K}$ via an exact sequence
$$
0\longrightarrow I_{\Y}/I_{\V} \longrightarrow
R_{\Y}^2(-1,0)\oplus R_{\Y}^2(0,-1)
\longrightarrow \Omega^1_{R_{\Y}/K}\longrightarrow 0
$$
where $\V$ is the fat point scheme obtained by increasing the
multiplicities of all points in~$\Y$ by one. This exact
sequence shows that one can compute the Hilbert function of the
K\"ahler differential module of $R_{\Y}/K$ from the Hilbert
functions of $\Y$ and $\V$. As Example~\ref{counterex}
shows, the exactness of this sequence depends on the hypothesis
that~$\Y$ is a fat point scheme and does not hold in general.

Section~4 contains a more detailed study of the Hilbert function
of $\Omega^1_{R_{\Y}/K}$ for a fat point scheme~$\Y$ in~$\popo$.
In particular, if~$\X$ is a set of
reduced points in~$\popo$ and~$\Y$ is a fat point scheme
$\Y = \{ (P_{ij}, m_{ij}) \mid P_{ij}\in \X\}$
supported at~$\X$, 
we can associate to~$\Y$ two tuples
$\alpha_{\Y}$ and $\beta_{\Y}$ as in~\cite{GV04}, \S~3.
Using these, we define two new tuples
$\hat{\alpha}_{\Y}$ and $\hat{\beta}_{\Y}$ and give formulas
for the Hilbert function of  $\Omega^1_{R_{\Y}/K}$
depending only on $\hat{\alpha}_{\Y}$ and $\hat{\beta}_{\Y}$
(see Prop.~\ref{PropSec3.3}). These formulas allow us to
describe the Hilbert function in large degrees explicitly
and to show that it is determined by
finitely many of its values.
If the corresponding fat point scheme
$\V = \{ (P_{ij}, m_{ij}+1) \mid P_{ij}\in \X\}$
is arithmetically Cohen-Macaulay (and hence,
if $m_{ij}>1$ for some $i,j$, also~$\Y$ is
arithmetically Cohen-Macaulay), we can describe the
Hilbert function of the K\"ahler differential module of~$\Y$
in terms of $\alpha_{\Y}$ and $\hat{\alpha}_{\Y}$
(see Prop.~\ref{ACMsupport}).

For a fat point scheme~$\Y$ whose support~$\X$
is a complete or an almost complete intersection, we give
even more detailed descriptions of the Hilbert function
of $\Omega^1_{R_{\Y}/K}$ in Section~5.
In particular, if~$\X$ is a reduced complete intersection
in~$\popo$ and~$\Y$ is an equimultiple
fat point scheme supported at~$\X$, we determine this Hilbert
function explicitly (see Prop.~\ref{equimultipleoverCI}).
Furthermore, if $m\geq 2$ and if $\Y =m\X$ is supported at a
complete intersection~$\X$, we show that the Hilbert function of
the K\"ahler differential module $\Omega^1_{R_{\Y_{ij}}/K}$ of
$\Y_{ij}=\sum_{(k,l) \neq (i,j)}mP_{kl}+(m-1)P_{ij}$
does not depend on the choice of $(i,j)$ (see Prop.~\ref{indip1}).
In the case $m=1$, the same result holds if $\X$ is a complete
intersection of type~$(h,h)$ (see Prop.~\ref{indip2}).
Of course, this independence on $(i,j)$ is reminiscent of the
Cayley-Bacharach Property (CBP) in~$\popo$ which we study in the
last section.

But before, in Section~6, we look at the K\"{a}hler different
of a fat point scheme~$\Y$ in~$\popo$.
For this we restrict our attention to arithmetically
Cohen-Macaulay fat
point schemes. Thus we may assume that
$x_0,y_0$ give rise to a regular sequence in~$R_\Y$.
Then the initial Fitting ideal
$\vartheta_\Y=F_0(\Omega^1_{R_\Y/K[x_0,y_0]})$
is also called the {\it K\"{a}hler
different} of~$\Y$ w.r.t.~$\{x_0,y_0\}$.
In Prop.~\ref{propSec5.12} we provide some basic properties
of the Hilbert function of~$\vartheta_{\Y}$ and show that
$\HF_{\vartheta_\Y} = 0$ if and only if~$\Y$ contains
no reduced point.
For an ACM set of reduced points, even more properties
of the Hilbert function of~$\vartheta_{\Y}$ are shown in
Prop.~\ref{HFofThetaForPoints}.

Finally, in the last section, we define the
Cayley-Bacharach property for a set~$\X$ of reduced
points in~$\popo$. If~$\X$ is ACM,
we characterize this property using the K\"ahler different, and
we write down the Hilbert function of~$\vartheta_{\X}$.

Unless explicitly mentioned otherwise, we use the
definitions and notation regarding K\"ahler differential modules,
K\"ahler differents, arithmetically Cohen-Ma\-cau\-lay
sets of points in $\popo$ and their properties found
in the books \cite{GV15, KR00, KR05, Ku86}.
All examples were computed using the
computer algebra system ApCoCoA (see \cite{ApCoCoA}).

%
%

\bigbreak
\section{Zero-Dimensional Schemes in $\popo$}

Let $K$ be a field of characteristic zero, and let
$S= K[X_0,X_1,Y_0,Y_1]$ be the bigraded polynomial ring
equipped with the $\Z^2$-grading defined by
$\deg X_0=\deg X_1=(1,0)$ and
$\deg Y_0=\deg Y_1=(0,1)$.
For $(i,j)\in \Z^2$, we let $S_{i,j}$
be the homogeneous component of degree $(i,j)$ of~$S$, i.e.,
the $K$-vector space with basis
$\{X_0^{\alpha_0}X_1^{\alpha_1}Y_0^{\beta_0}Y_1^{\beta_1}
\mid \alpha_0+\alpha_1=i,\, \beta_0+\beta_1=j,\,
\alpha_k,\beta_\ell\in\N\}$.

Note that $0\in S_{i,j}$ for all $i,j$,
and all elements of~$K$ have degree $(0,0)$.
Given two tuples $(i_1,j_1), (i_2,j_2) \in \Z^2$
we write $(i_1,j_1) \preceq (i_2,j_2)$ if $i_1\le i_2$
and $j_1\le j_2$.

Given an ideal $I \subseteq S$, we set $I_{i,j} : = I \cap
S_{i,j}$ for all $(i,j) \in \Z^2$.  Clearly, $I_{i,j}$ is a
$K$-vector subspace of~$S_{i,j}$ and $I \supseteq \bigoplus_{(i,j)
\in \Z^2} I_{i,j}$.  The ideal $I$ is called {\it bihomogeneous}
if $I = \bigoplus_{(i,j) \in \Z^2} I_{i,j}$. If~$I$ is a
bihomogeneous ideal of~$S$ then the quotient ring $S/I$
also inherits the structure of a bigraded ring via
$(S/I)_{i,j}:= S_{i,j}/I_{i,j}$ for all $(i,j) \in \Z^2$.

A finitely generated $S$-module $M$ is a {\it bigraded $S$-module}
if it has a direct sum decomposition
\[M = \bigoplus_{(i,j) \in \Z^2} M_{i,j}\]
with the property that $S_{i,j}M_{k,l} \subseteq M_{i+k,j+l}$
for all $(i,j),(k,l) \in \Z^2$.  For a bihomogeneous ideal~$I$
of~$S$, both~$I$ and $S/I$ are examples of bigraded $S$-modules.
Another example is the polynomial ring~$S$ equipped
with a {\it shifted grading}.
Specifically, for $(a,b) \in \Z^2$,
we let $S(-a,-b)_{i,j} := S_{i-a,j-b}$.
Notice that $S_{i,j} = 0$ if $(i,j)\nsucceq (0,0)$.

\begin{definition}
Let $M$ be a finitely generated bigraded $S$-module.
The {\it Hilbert function} of~$M$ is the numerical function
$\HF_{M}: \Z^2 \rightarrow \N$ defined by
\[
\HF_{M}(i,j) := \dim_K M_{i,j}
\quad \mbox{for all $(i,j)\in \Z^2$}.
\]
In particular, for a bihomogeneous ideal $I$ of $S$,
the Hilbert function of $S/I$ satisfies
\[
\HF_{S/I}(i,j) := \dim_k (S/I)_{i,j}
= \dim_k S_{i,j} - \dim_k I_{i,j}
\quad \mbox{for all $(i,j)\in \Z^2$}.\]
\end{definition}

If~$M$ is a finitely generated bigraded $S$-module such that
$\HF_M(i,j) =0$ for $(i,j) \nsucceq (0,0)$, we write
the Hilbert function of~$M$ as an infinite matrix,
where the initial row and column are indexed by~0.

\begin{example}
In the polynomial ring $S$, the set
of terms $\{X_0^aX_1^bY_0^cY_1^d \mid a+b =i, c+d=j\}$
forms a basis of the $K$-vector space $S_{i,j}$ for all
$(i,j)\succeq (0,0)$. So, the Hilbert function of $S$ satisfies
$\HF_{S}(i,j) = 0$ for $(i,j)\nsucceq (0,0)$ and
$$
\HF_{S}(i,j)=\dim_K S_{i,j} = (i+1)(j+1)
$$
for all $(i,j)\succeq (0,0)$.
In this case, we can write
$$
\HF_S =
\left[ \begin{smallmatrix}
1&2&3&4 & \cdots\\
2&4&6&8 & \cdots\\
3&6&9&12 & \cdots\\
4&8&12&16 & \cdots\\
\svdots & \svdots&
\svdots & \svdots & \sddots\\
\end{smallmatrix}\right].
$$
Next let us consider the monomial ideal
$I = \langle X_1,Y_1\rangle$ in $S$.
We have $\dim_K I_{0,0} =0$, and
$\dim_K I_{i,j} =(i+1)(j+1)-1$ for
$(i,j)\in \N^2\setminus\{(0,0)\}$
since every term of degree $(i,j)$
is an element of~$I_{i,j}$, except for $X_0^iY_0^j$.
So, the Hilbert function of $S/I$ is given by
$$
\HF_{S/I} =
\left[ \begin{smallmatrix}
1&1&1&1 & \cdots\\
1&1&1&1 & \cdots\\
1&1&1&1 & \cdots\\
1&1&1&1 & \cdots\\
\svdots & \svdots & \svdots & \svdots & \sddots\\
\end{smallmatrix}\right].
$$
\end{example}

Recall that a point in $\popo$ is of the form
$$
P= [a_0:a_1] \times [b_0:b_1] \in \popo
$$
where $[a_0:a_1],\, [b_0:b_1] \in \P^1$.
Its vanishing ideal is the bihomogeneous prime ideal
$$
I_P = \langle a_1X_0-a_0X_1, b_1Y_0-b_0Y_1\rangle.
$$
Let $\pi_1 : \popo \rightarrow \P^1$ be the projection morphism
given by $P_1\times P_2 \mapsto P_1$ and let
$\pi_2 : \popo \rightarrow \P^1$ be
the projection morphism given by $P_1\times P_2 \mapsto P_2$.

Let $\X$ be a set of~$s$ distinct points in~$\popo$. The set
$\pi_1(\X) = \{Q_1,\dots,Q_r\}$ is the set of $r\le s$
distinct first components of the points of~$\X$.
Similarly, the set $\pi_2(\X) = \{R_1,\dots,R_t\}$ is the
set of $t\le s$ distinct second components.
For $i=1,...,r$, let $L_{Q_i}$ denote the $(1,0)$-form
(i.e., the linear form in $K[X_0,X_1]$) which
vanishes at all points of~$\popo$ whose first component is~$Q_i$.
Similarly, for $j=1,...,t$, let $L_{R_j}$
denote the $(0,1)$-form that vanishes at all
the points of $\popo$ with second component~$R_j$.
Furthermore, we let
$$
D_\X := \{\, (i,j) \,\mid\, P_{ij}=Q_i\times R_j\in \X \,\}.
$$

\begin{definition}
For $(i,j)\in D_\X$, let $m_{ij}$ be a positive integer,
and let $\wp_{ij}$ be the vanishing ideal of the point
$P_{ij}\in \X$. Let~$\Y$ be the subscheme of~$\popo$
defined by the saturated bihomogeneous ideal
$$
I_{\Y} = \bigcap_{(i,j)\in D_\X} \wp_{ij}^{m_{ij}}.
$$
\begin{itemize}
\item[(a)]
The scheme $\Y$ is called a {\it fat point scheme}
of~$\popo$ and written as
    $$
    \Y = \{(P_{ij}; m_{ij}) \mid (i,j)\in D_\X\}
    \quad \mbox{or} \quad
    \Y = \sum_{(i,j)\in D_\X} m_{ij} P_{ij}.
    $$

\item[(b)]
  The integer $m_{ij}$ is called the {\it multiplicity}
  of the point $P_{ij}$ in~$\Y$.

\item[(c)]
  If $m_{ij}=m$ for all $(i,j)\in D_\X$, we denote $\Y$
  also by $m\X$ and call it an {\it equimultiple}
  (or {\it homogeneous}) fat point scheme.
\end{itemize}
\end{definition}

The bihomogeneous coordinate ring of
a fat point scheme~$\Y$ in~$\popo$ is given
by $R_\Y := S/I_\Y$ and its Hilbert function
will be denoted by~$\HF_\Y$. Also, the support
of~$\Y$ is $\X =\supp(\Y)$.

\begin{example} \label{examS2}
Let $i\ge 0$, let $Q_i = R_i = [1:i] \in \P^1$, let
$P_{ij}$ denote the point $Q_i\times R_j$ in $\popo$,
and let
$\X=\{P_{11},P_{12},P_{23},P_{31},P_{32}\}\subseteq\popo$.
The point $P_{ij}$ can be viewed as the intersection of
the horizontal ruling defined by $L_{Q_i} = X_1-iX_0$
and the vertical ruling defined by $L_{R_j} = Y_1-jY_0$.
Using the lines of the two rulings on $\popo$ passing through
the points of~$\X$, we can sketch the set~$\X$ as in the 
following figure (see \cite[Sec.~3.2]{GV15}).
\begin{center}
\begin{tikzpicture}[scale=.6, transform shape]
\filldraw[black] (1,3) circle (3pt);
\filldraw[black] (2,3) circle (3pt);
\filldraw[black] (3,2) circle (3pt);
\filldraw[black] (1,1) circle (3pt);
\filldraw[black] (2,1) circle (3pt);
\node (a1) at (0,3) {$L_{Q_1}$};
\node (a2) at (0,2) {$L_{Q_2}$};
\node (a3) at (0,1) {$L_{Q_3}$};
\node (a4) at (1,4) {$L_{R_1}$};
\node (a5) at (2,4) {$L_{R_2}$};
\node (a6) at (3,4) {$L_{R_3}$};
\draw[-] (3,0.5) -- (3, 3.5);
\draw[-] (2,0.5) -- (2, 3.5);
\draw[-] (1,0.5) -- (1, 3.5);
\draw[-] (0.5,3) -- (3.5,3);
\draw[-] (0.5,2) -- (3.5,2);
\draw[-] (0.5,1) -- (3.5,1);
\end{tikzpicture}
\end{center}
Let $\Y$ and $\V$ be the two fat point schemes
supported at~$\X$
$$
\Y = 2P_{11}+2P_{12}+P_{23}+P_{31}+2P_{32}
~\mbox{and}~
\V = 3P_{11}+3P_{12}+2P_{23}+2P_{31}+3P_{32}.
$$
Then, using ApCoCoA, we find that the Hilbert functions
of $\Y$ and of $\V$ are
$$
\HF_{\Y} =
\left[\begin{smallmatrix}
1 & 2 & 3 & 4 & 5 & 5 & \dots \\
2 & 4 & 6 & 8 & 8 & 8 & \dots \\
3 & 6 & 9 & 10 & 10 & 10 & \dots \\
4 & 8 & 10 & 11 & 11 & 11 & \dots \\
5 & 8 & 10 & 11 & 11 & 11 & \dots \\
5 & 8 & 10 & 11 & 11 & 11 & \dots \\
\svdots & \svdots & \svdots & \svdots &
\svdots & \svdots & \sddots
\end{smallmatrix}\right]
\hbox{\ and\ }
\HF_{\V} =
\left[\begin{smallmatrix}
1 & 2 & 3 & 4 & 5 & 6 & 7 & 8 & 8 & \dots \\
2 & 4 & 6 & 8 & 10 & 12 & 14 & 14 & 14 & \dots \\
3 & 6 & 9 & 12 & 15 & 18 & 18 & 18 & 18 & \dots \\
4 & 8 & 12 & 16 & 20 & 21 & 21 & 21 & 21 & \dots \\
5 & 10 & 15 & 20 & 22 & 23 & 23 & 23 & 23 & \dots \\
6 & 12 & 18 & 21 & 23 & 24 & 24 & 24 & 24 & \dots \\
7 & 14 & 18 & 21 & 23 & 24 & 24 & 24 & 24 & \dots \\
8 & 14 & 18 & 21 & 23 & 24 & 24 & 24 & 24 & \dots \\
8 & 14 & 18 & 21 & 23 & 24 & 24 & 24 & 24 & \dots \\
\svdots & \svdots & \svdots &\svdots & \svdots &
\svdots & \svdots & \svdots & \svdots & \sddots
\end{smallmatrix}\right]
$$
\end{example}

Now we introduce a special class of fat point schemes
in~$\popo$.

\begin{definition}
A fat point scheme $\Y$ of $\popo$ is called
{\it arithmetically Cohen-Macaulay} (or ACM for short)
if its bihomogeneous coordinate ring $R_{\Y}$
is a Cohen-Macaulay ring.
\end{definition}

In the case that $\Y$ is an ACM fat point scheme
in~$\popo$, we can find a regular sequence of length two
in~$R_\Y$ of the following form
(see \cite[Thm.~4.4]{GV04}).

\begin{theorem}\label{thmSec1.4}
Let $\Y$ be an ACM fat point scheme in~$\popo$.
Then there exist bihomogeneous polynomials $L_1,L_2 \in S$ with
$\deg(L_1)=(1,0)$ and $\deg(L_2)=(0,1)$
such that $L_1,\, L_2$ is a regular
sequence for~$R_{\Y}$.
\end{theorem}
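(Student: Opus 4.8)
The plan is to exploit the Cohen--Macaulay hypothesis and build the regular sequence by a prime-avoidance argument, choosing each form generically in its bidegree. Since $\Y$ is $0$-dimensional, $R_\Y$ has Krull dimension~$2$, and the ACM assumption gives $\operatorname{depth}(R_\Y)=2$, so a homogeneous regular sequence of length~$2$ certainly exists; the content of the statement is that it can be taken with the prescribed bidegrees $(1,0)$ and $(0,1)$. Throughout I would use that $K$ is infinite (as $\operatorname{char}K=0$), so that a finite union of proper $K$-subspaces of a finite-dimensional $K$-vector space never exhausts it.

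\emph{Step 1 (the $(1,0)$-form).} Because $I_\Y=\bigcap_{(i,j)\in D_\X}\wp_{ij}^{m_{ij}}$ is an intersection of the $\wp_{ij}$-primary ideals $\wp_{ij}^{m_{ij}}$, the associated primes of $R_\Y$ are exactly the point ideals $\wp_{ij}$. A form of bidegree $(1,0)$ lies in $\wp_{ij}$ if and only if it is a scalar multiple of $L_{Q_i}$, so $\wp_{ij}\cap S_{1,0}=K\cdot L_{Q_i}$ is a line in the plane $S_{1,0}$. Avoiding the finitely many lines $K\cdot L_{Q_1},\dots,K\cdot L_{Q_r}$, I would pick a $(1,0)$-form $L_1$ vanishing at none of the $Q_i$; then $L_1$ lies in no associated prime of $R_\Y$, hence is a nonzerodivisor on $R_\Y$.

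\emph{Step 2 (the $(0,1)$-form).} Set $\bar R=R_\Y/L_1R_\Y$. By Step~1 together with the ACM hypothesis this ring is Cohen--Macaulay of dimension~$1$, hence unmixed: every associated prime $\mathfrak q$ of $\bar R$ satisfies $\dim(S/\mathfrak q)=1$. I would show that no such $\mathfrak q$ contains $\langle Y_0,Y_1\rangle$. Indeed, since $L_1$ vanishes at no $Q_i$, the ruling it defines is disjoint from $\supp(\Y)$, so $I_\Y+\langle L_1\rangle$ cuts out no point of $\popo$; consequently every associated prime $\mathfrak q$ of $\bar R$ contains $\langle X_0,X_1\rangle$ or $\langle Y_0,Y_1\rangle$. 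If $\mathfrak q\supseteq\langle Y_0,Y_1\rangle$, then by unmixedness $\mathfrak q=\langle \ell,Y_0,Y_1\rangle$ for some $(1,0)$-form $\ell$, and $L_1\in\mathfrak q$ forces $\ell$ to be a scalar multiple of $L_1$. But the pure form $G=\prod_{i=1}^{r} L_{Q_i}^{\,m}$ with $m=\max_{ij}m_{ij}$ lies in $I_\Y\subseteq\mathfrak q$, whereas $G\in\langle L_1,Y_0,Y_1\rangle$ would force $L_1\mid G$ in $K[X_0,X_1]$ and hence $L_1$ to vanish at some $Q_i$ --- a contradiction. Therefore each $\mathfrak q\in\operatorname{Ass}(\bar R)$ meets $S_{0,1}$ in a proper subspace, and avoiding these finitely many subspaces I would choose a $(0,1)$-form $L_2$ lying in no associated prime of $\bar R$. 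Then $L_2$ is a nonzerodivisor on $\bar R$, so $L_1,L_2$ is the desired regular sequence.

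The main obstacle is Step~2, and precisely the exclusion of associated primes containing $\langle Y_0,Y_1\rangle$: this is exactly where the ACM hypothesis is indispensable. Without it, $\bar R$ could acquire an embedded prime such as the irrelevant maximal ideal $\langle X_0,X_1,Y_0,Y_1\rangle$, which contains every positive-degree form and in particular every $(0,1)$-form, so no generic choice of $L_2$ would be regular. The unmixedness coming from Cohen--Macaulayness is what pins the offending primes down to the shape $\langle \ell,Y_0,Y_1\rangle$ and lets the explicit element $G\in I_\Y$ rule them out.
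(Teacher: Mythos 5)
The paper does not actually prove this statement: Theorem~\ref{thmSec1.4} is quoted from \cite[Thm.~4.4]{GV04} and used as a black box, so there is no internal proof to compare against. Judged on its own, your argument is correct. Step~1 is the standard prime-avoidance observation (and, as the paper notes via \cite[Lemma~1.2]{GV04}, does not need the ACM hypothesis); Step~2 correctly isolates where Cohen--Macaulayness enters, namely in guaranteeing that $\bar R=R_{\Y}/L_1R_{\Y}$ is unmixed of dimension one, so that no associated prime can be the irrelevant maximal ideal and swallow all of $S_{0,1}$. Two small remarks. First, since $K$ is only assumed to be of characteristic zero, a height-three bihomogeneous prime containing $\langle Y_0,Y_1\rangle$ has the form $\langle f,Y_0,Y_1\rangle$ with $f$ an \emph{irreducible} form in $K[X_0,X_1]$, not a priori linear; this is harmless in your argument because $L_1\in\mathfrak{q}$ forces $f\mid L_1$ and hence $f$ linear anyway, but the phrase ``by unmixedness $\mathfrak{q}=\langle\ell,Y_0,Y_1\rangle$'' should be justified this way rather than by unmixedness alone. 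Second, the case analysis in Step~2 can be streamlined and the appeal to the biprojective Nullstellensatz avoided: any prime containing $I_{\Y}+\langle L_1\rangle$ contains $\prod_{(i,j)}\wp_{ij}^{m_{ij}}$, hence some $\wp_{ij}$, hence $\langle L_{Q_i},L_1\rangle=\langle X_0,X_1\rangle$ together with $L_{R_j}$; so the minimal primes of $\bar R$ are among the height-three primes $\langle X_0,X_1,L_{R_j}\rangle$, none of which contains $\langle Y_0,Y_1\rangle$, and unmixedness then says these are \emph{all} the associated primes. Either way, choosing $L_2\in S_{0,1}$ outside the finitely many lines $K\cdot L_{R_j}$ finishes the proof exactly as you describe.
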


Given a set of distinct points $\X$ in $\popo$,
and any point $P \in \X$, we sometimes want to
compare the properties of $\X$ with those
of~$\X \setminus \{P\}$.
A separator gives us a tool to compare and
contrast these two sets of points.

\begin{definition}\label{separator}
Let $\X$ be a set of distinct points in~$\popo$
and let $(i,j) \in D_\X$.
A bihomogeneous element $F\in S$ is called
a {\it separator} \index{separator} of~$P_{ij}$
if $F(P_{ij})\neq 0$, but $F(P_{kl})=0$ for
all $(k,l)\in D_\X\setminus\{(i,j)\}$.
We call $F$ a {\it minimal separator} of $P_{ij}$ if
there does not exist a separator $G$ of $P_{ij}$ with
$\deg G \prec \deg F$. In this case we also call
$\overline{F}\in R_\X$ a minimal separator of~$P_{ij}$.
\end{definition}

\begin{remark}
Note that if $F\in S$ is a separator of a point $P_{ij}$
of a set of distinct points $\X$ in $\popo$,
then the ideal $I_\X + \langle F\rangle$ is
the vanishing ideal of the set~$\X\setminus\{P_{ij}\}$.
\end{remark}

More generally, let $\Y = \sum_{(i,j)\in D_\X} m_{ij}P_{ij}$
be a fat point scheme in~$\popo$ (with support $\X =\supp(\Y)$),
and let $\wp_{ij}$ be the vanishing ideal of $P_{ij}$
for every $(i,j)\in D_\X$.

\begin{definition}
Let $(i,j)\in D_{\X}$, and let
$G\in S$ be a bihomogeneous element such that
$G \in \wp_{ij}^{m_{ij}-1} \setminus \wp_{ij}^{m_{ij}}$
and $G \in \wp_{kl}^{m_{kl}}$ for all
$(k,l)\in D_\X\setminus\{(i,j)\}$.
Then $G$ is called a {\it separator} of~$P_{ij}$ in~$\Y$.
\end{definition}

In this setting the preceding remark generalizes as follows.

\begin{proposition}
Let $\Y$ be a fat point subscheme of~$\popo$ as above,
let $(i,j)\in D_{\X}$, and let $G\in S$ be a separator
of~$P_{ij}$.
Then the ideal $I_\Y + \langle G\rangle$ defines
a subscheme of~$\Y$ of degree $\deg(\Y)-1$.
\end{proposition}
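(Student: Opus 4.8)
The plan is to realize the degree of the subscheme $\Y'$ cut out by $I_{\Y'}=I_\Y+\langle G\rangle$ as a sum of local contributions and to show that the separator hypothesis forces exactly one of these contributions to drop by~$1$ while the rest are unchanged.

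First I would note that $I_{\Y'}\supseteq I_\Y$, so $\Y'$ is a closed subscheme of~$\Y$ and only its degree needs attention. Since $\Y$ and $\Y'$ are zero-dimensional, their degrees are the sums of the $K$-dimensions of the local rings at the support points,
\[
\deg(\Y)=\sum_{(k,l)\in D_\X}\dim_K \mathcal O_{\Y,P_{kl}},
\]
and likewise for~$\Y'$. As localization is flat, it commutes with the finite intersection $I_\Y=\bigcap_{(k,l)}\wp_{kl}^{m_{kl}}$; localizing at $P_{kl}$ turns every factor with index different from $(k,l)$ into the unit ideal, so the stalk of $I_\Y$ at $P_{kl}$ equals $\mathfrak m_{kl}^{m_{kl}}$, where $\mathfrak m_{kl}$ is the maximal ideal of the two-dimensional regular local ring $\mathcal O_{P_{kl}}=\mathcal O_{\popo,P_{kl}}$. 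This reduces everything to a local computation at each point.

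At every $P_{kl}$ with $(k,l)\neq(i,j)$ the condition $G\in\wp_{kl}^{m_{kl}}$ shows that the image of $G$ already lies in the stalk $\mathfrak m_{kl}^{m_{kl}}$, so the local ideal, and hence the contribution $\binom{m_{kl}+1}{2}$, is unchanged. The entire effect is therefore concentrated at $P_{ij}$. Writing $\mathcal O=\mathcal O_{P_{ij}}$, $\mathfrak m=\mathfrak m_{ij}$ and $m=m_{ij}$, the remaining condition $G\in\wp_{ij}^{m-1}\setminus\wp_{ij}^{m}$ says that the image $g$ of $G$ lies in $\mathfrak m^{m-1}\setminus\mathfrak m^{m}$, and the key point is the local length identity
\[
\dim_K \mathcal O/(\mathfrak m^{m}+\langle g\rangle)=\binom{m+1}{2}-1 .
\]
Its proof is immediate: because $g\in\mathfrak m^{m-1}$ we have $\mathfrak m\,g\subseteq\mathfrak m^{m}$, so modulo $\mathfrak m^{m}$ the ideal $\langle g\rangle$ reduces to the one-dimensional space $K\bar g$, which is nonzero since $g\notin\mathfrak m^{m}$; thus the colength drops by exactly~$1$ from $\dim_K\mathcal O/\mathfrak m^{m}=\binom{m+1}{2}$.

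Summing the local contributions gives $\deg(\Y')=\deg(\Y)-1$, as desired. I expect the main obstacle to be conceptual rather than computational: the ideal $I_\Y+\langle G\rangle$ need not be saturated, so I would stress that the degree of a zero-dimensional scheme is read off from the stalks and is therefore insensitive to saturation, which is precisely what legitimizes the localization argument. A cleaner-looking alternative avoids localization altogether: from the exact sequence $0\to\big(S/(I_\Y:G)\big)(-\deg G)\xrightarrow{\cdot\bar G}R_\Y\to R_{\Y'}\to 0$ one identifies the saturation of $I_\Y:G$ with the vanishing ideal $\wp_{ij}$ of the single reduced point $P_{ij}$ (again a local check resting on the separator property, since orders multiply in the associated graded ring of~$\mathcal O$), so the kernel has eventual Hilbert-function value~$1$; comparing the stable values of $\HF_\Y$ and $\HF_{\Y'}$ then yields the claim.
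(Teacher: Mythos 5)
Your main argument is correct, and it takes a genuinely different route from the paper. The paper works globally: it sets up the multiplication sequence
$0\to \bigl(S/(I_\Y:\langle G\rangle)\bigr)(-\deg G)\xrightarrow{\times G} S/I_\Y\to S/(I_\Y+\langle G\rangle)\to 0$
and proves the exact equality $I_\Y:\langle G\rangle=\wp_{ij}$ by a direct computation with Macaulay's Basis Theorem (writing a bihomogeneous $H$ as $aX_0^kY_0^l+H'$ with $H'\in\wp_{ij}$ after moving $P_{ij}$ to $[1:0]\times[1:0]$, and showing $a=0$); the degree drop of $1$ is then read off because $S/\wp_{ij}$ has degree $1$. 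You instead compute the degree as a sum of local lengths, observe that the separator condition leaves every stalk away from $P_{ij}$ untouched, and isolate the whole effect in the one-line local identity $\dim_K\mathcal O/(\mathfrak m^m+\langle g\rangle)=\binom{m+1}{2}-1$, which you prove correctly from $\mathfrak m g\subseteq\mathfrak m^m$ and $g\notin\mathfrak m^m$. Your approach buys transparency about where the drop happens and makes the insensitivity to saturation explicit (a point the paper glosses over); the paper's approach buys the stronger and independently useful fact that the colon ideal is exactly $\wp_{ij}$ on the nose, not merely up to saturation. The alternative you sketch in your last sentences is essentially the paper's proof, except that the paper avoids any appeal to saturation or to the associated graded ring by running the colon computation globally. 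One small point worth making explicit in your localization argument: to conclude that the germ $g$ lies in $\mathfrak m^{m-1}\setminus\mathfrak m^m$ you should note that $\wp_{ij}^{m}$ is $\wp_{ij}$-primary, so membership of $G$ in $\wp_{ij}^{m}$ can be tested after localizing at $P_{ij}$; this is standard but is the one place where the passage from the bihomogeneous ideal to the stalk needs a word.
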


\begin{proof}
It suffices show that
$I_\Y : \langle G\rangle = \wp_{ij}$
and to apply the short exact sequence
$$
0\longrightarrow S/(I_\Y : \langle G\rangle)(-\deg(G))
\stackrel{\times G}{\longrightarrow} S/I_\Y
\longrightarrow  S/(I_\Y + \langle G\rangle)
\longrightarrow 0.
$$
Clearly, we have $\wp_{ij} \subseteq I_\Y:\langle G\rangle$.
For the other inclusion, let $H \in I_\Y:\langle G\rangle$
be a bihomogeneous element.
Then $GH \in I_\Y \subseteq \wp_{ij}^{m_{ij}}$.
Using a change of coordinates, we may assume that
$P_{ij} = [1:0]\times[1:0]$, and so
$\wp_{ij}=\langle X_1,Y_1\rangle$.
We write $H = aX_0^kY_0^l+H'$, where $H'\in \wp_{ij}$
with $\deg(H')=\deg(H)=(k,l)$ and $a\in K$.
We get $GH = aX_0^kY_0^lG + H'G$,
and so $aX_0^kY_0^lG \in \wp_{ij}^{m_{ij}}$.
Since $G\notin \wp_{ij}^{m_{ij}}$, Macaulay's Basis Theorem
(cf. \cite[Thm.~1.5.7]{KR00}) implies $a = 0$,
and hence $H = H'\in\wp_{ij}$.
\end{proof}

Next we recall the notion of a minimal separator
of a fat point in a fat point scheme in~$\popo$.

\begin{definition}\label{fatseparator}
Let $\Y = \sum_{(i,j)\in D_\X} m_{ij}P_{ij}$ be a fat
point scheme in~$\popo$, let $(i,j)\in D_\X$, and let
$\Y'= \sum_{(k,l)\in D_\X\setminus\{(i,j)\}} m_{kl}P_{kl}
+ (m_{ij}-1)P_{ij}$ be the fat point scheme
obtained by reducing the multiplicity of $P_{ij}$ by one.
(If $m_{ij}= 1$, then the point $P_{ij}$ does
not appear in the support of~$\Y'$.)

\begin{enumerate}
\item[(a)] A set $\{F_1,\dots,F_t\}$ is called a set
  of {\it minimal separators of $P_{ij}$ in~$\Y$} if
  $I_{\Y'}/I_{\Y}=
  \langle \overline{F}_1,\dots,\overline{F}_t\rangle$
  and if there does not exist a set $\{G_1,\dots,G_u\}$
  with $u<t$ such that
  $I_{\Y'}/I_{\Y} =
  \langle \overline{G}_1,\dots, \overline{G}_u\rangle$.
  In this case we also say that
  $\{\overline{F}_1,\dots, \overline{F}_t\} \subseteq R_\Y$
  is a set of minimal separators of $P_{ij}$ in~$\Y$.

\item[(b)] The {\it degree tuple} of a minimal separators
  of $P_{ij}$ in~$\Y$ is the tuple
  $$
  \deg_\Y(P_{ij}) = (\deg(F_1),\dots, \deg(F_t))
  $$
  where $\{F_1,\dots,F_t\}$ is any set of minimal
  separators of~$P_{ij}$ in~$\Y$, relabeled such that
  $\deg(F_1) \le_{\texttt{Lex}} \cdots
  \le_{\texttt{Lex}} \deg(F_t)$.
\end{enumerate}
\end{definition}

Basic information about minimal separators of fat points
in~$\popo$ can be found in~\cite{GV12}.
In particular, we note that the definition of $\deg_\Y(P_{ij})$ 
does not depend on the choice of $(F_1,\dots, F_t)$
and that these numbers are related to the graded Betti numbers
of~$I_{\Y}$ as follows.

\begin{theorem}\label{pnmsepfrombetti}
Let $\Y,\Y'$ be fat point schemes in $\popo$ as in
Definition~\ref{fatseparator}, and let
$\deg_\Y(P_{ij}) = (d_1,\ldots,d_t)$.
Suppose that $\Y$ is ACM, so that the minimal $\N^2$-graded
free resolution of $S/I_\Y$ has the form
\[ 0 \rightarrow \F_2=
\bigoplus_{(k,l) \in \Sigma} S(-k,-l)
\rightarrow \F_1 \rightarrow S
\rightarrow S/I_{\Y} \rightarrow 0\]
with a finite set $\Sigma \subset\N^2$.
If $\Y'$ is ACM then we have $t=m_{ij}$ and
$d_k + (1,1) \in \Sigma$ for $k=1,\dots, m_{ij}$.

In particular, if $\Y$ is an ACM set of distinct points
in~$\popo$, then the subset $\Y'=\Y \setminus \{P_{ij}\}$
is ACM if and only if
$\deg_\Y(P_{ij}) + (1,1)\in \Sigma$.
\end{theorem}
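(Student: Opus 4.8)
The plan is to analyze the separator module $M := I_{\Y'}/I_{\Y}$ through the short exact sequence
$$0\longrightarrow I_{\Y'}/I_{\Y}\longrightarrow R_{\Y}\longrightarrow R_{\Y'}\longrightarrow 0,$$
and to recognize $M$ as a module over the coordinate ring of the single point $P_{ij}$. First I would check that $\wp_{ij}\cdot I_{\Y'}\subseteq I_{\Y}$: since $I_{\Y'}\subseteq\wp_{ij}^{m_{ij}-1}$ and $I_{\Y'}\subseteq\wp_{kl}^{m_{kl}}$ for $(k,l)\neq(i,j)$, multiplying by $\wp_{ij}$ lands in $\wp_{ij}^{m_{ij}}\cap\bigcap_{(k,l)\neq(i,j)}\wp_{kl}^{m_{kl}}=I_{\Y}$. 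Hence $\wp_{ij}M=0$, so $M$ is a finitely generated bigraded module over $S/\wp_{ij}$, which (as $\wp_{ij}=\langle L_{Q_i},L_{R_j}\rangle$) is a two-dimensional regular graded ring, namely a polynomial ring in two variables of degrees $(1,0)$ and $(0,1)$.

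Next, using that both $\Y$ and $\Y'$ are ACM, I would show $M$ is free and compute its rank. Localizing at the height-two prime $\wp_{ij}$ kills the other components, giving $M_{\wp_{ij}}\cong\mathfrak n^{m_{ij}-1}/\mathfrak n^{m_{ij}}$, where $\mathfrak n$ is the maximal ideal of the regular local ring $S_{\wp_{ij}}$; this space is nonzero of dimension $m_{ij}$, so $M$ is supported on all of $V(\wp_{ij})$ and $\dim M=2$. The depth lemma applied to the sequence above, together with $\operatorname{depth}R_{\Y}=\operatorname{depth}R_{\Y'}=2$, yields $\operatorname{depth}M\geq\min(2,2+1)=2$. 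Thus $M$ is a maximal Cohen--Macaulay module over the regular ring $S/\wp_{ij}$ and is therefore free, of rank equal to $\dim_{\kappa(\wp_{ij})}M_{\wp_{ij}}=m_{ij}$. Since the degrees of a free basis are exactly the degrees of a minimal set of separators, this gives $t=m_{ij}$ and $M\cong\bigoplus_{k=1}^{m_{ij}}(S/\wp_{ij})(-d_k)$.

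Then I would read off the degrees. Each summand $(S/\wp_{ij})(-d_k)$ is resolved by the Koszul complex on the regular sequence $L_{Q_i},L_{R_j}$ cutting out $\wp_{ij}$, whence $\operatorname{Tor}_2^S(M,K)=\bigoplus_{k=1}^{m_{ij}}K(-d_k-(1,1))$. Applying $-\otimes_S K$ to the short exact sequence produces a long exact sequence in which $\operatorname{Tor}_3^S(R_{\Y},K)=0$ (as $\Y$ is ACM) and $\operatorname{Tor}_3^S(R_{\Y'},K)=0$ (as $\Y'$ is ACM), so that the connecting map $\operatorname{Tor}_3^S(R_{\Y'},K)\to\operatorname{Tor}_2^S(M,K)$ vanishes and the induced map $\gamma\colon\operatorname{Tor}_2^S(M,K)\to\operatorname{Tor}_2^S(R_{\Y},K)$ is injective. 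Because $\operatorname{Tor}_2^S(R_{\Y},K)$ is concentrated in the degrees recorded by $\Sigma$, this forces $d_k+(1,1)\in\Sigma$ for $k=1,\dots,m_{ij}$, proving the first assertion.

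Finally, for the reduced case $m_{ij}=1$, the forward implication is precisely what was just proved: $t=1$, so $\deg_\Y(P_{ij})=(d_1)$ is a single bidegree with $d_1+(1,1)\in\Sigma$. For the converse I would run the same $\operatorname{Tor}$-sequence in reverse. Since the condition $\deg_\Y(P_{ij})+(1,1)\in\Sigma$ only makes sense when $M$ is cyclic, we have $M\cong(S/\wp_{ij})(-d_1)$ and $\operatorname{Tor}_2^S(M,K)=K(-d_1-(1,1))$; the exact sequence then gives $\operatorname{Tor}_3^S(R_{\Y'},K)\cong\ker\gamma$, so $\Y'$ is ACM if and only if $\gamma\neq0$, i.e.\ if and only if the single class in degree $d_1+(1,1)$ maps to a nonzero element of $\operatorname{Tor}_2^S(R_{\Y},K)$. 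I expect the main obstacle to be exactly this nonvanishing: while $\gamma\neq0$ trivially implies $d_1+(1,1)\in\Sigma$, the reverse implication requires showing that the top Koszul syzygy of the minimal separator really contributes a \emph{minimal} second syzygy of $I_{\Y}$ rather than being a combination of lower ones. This is the only non-formal step, and it is where the minimality of the separator degree $d_1$ must be used.
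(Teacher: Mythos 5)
A preliminary remark: the paper states Theorem~\ref{pnmsepfrombetti} without proof, recalling it from \cite{GV12} (see also \cite{GV08}), so there is no internal proof to measure you against; I assess your argument on its own terms. Your proof of the first assertion is correct and essentially complete: the inclusion $\wp_{ij}I_{\Y'}\subseteq I_{\Y}$ makes $M=I_{\Y'}/I_{\Y}$ a module over the bigraded polynomial ring $S/\wp_{ij}$; the depth lemma applied to $0\to M\to R_{\Y}\to R_{\Y'}\to 0$, together with $\dim M=2$, shows $M$ is maximal Cohen--Macaulay and hence free over $S/\wp_{ij}$, of rank $\dim_{\kappa(\wp_{ij})}\bigl(\mathfrak n^{m_{ij}-1}/\mathfrak n^{m_{ij}}\bigr)=m_{ij}$; and the vanishing of $\operatorname{Tor}_3^S(R_{\Y'},K)$ makes $\gamma\colon\operatorname{Tor}_2^S(M,K)\to\operatorname{Tor}_2^S(R_{\Y},K)$ injective, which forces $d_k+(1,1)\in\Sigma$. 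This is a clean homological route, and it also delivers the ``only if'' half of the final statement.

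The genuine gap is the ``if'' direction of the final equivalence, precisely where you flag it. Your exact sequence gives $\operatorname{Tor}_3^S(R_{\Y'},K)\cong\ker\gamma$, so the claim ``$d_1+(1,1)\in\Sigma$ implies $\Y'$ is ACM'' is \emph{equivalent} to ``$d_1+(1,1)\in\Sigma$ implies $\gamma\neq 0$''; reformulating the statement is not proving it. Nothing formal rules out that $\operatorname{Tor}_2^S(R_{\Y},K)$ is nonzero in bidegree $d_1+(1,1)$ while the Koszul class of the separator still maps to zero there (i.e.\ the corresponding second syzygy of $I_{\Y}$ fails to be minimal), in which case $\Y'$ would not be ACM. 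Closing this requires input that your argument never uses, namely the explicit structure of minimal free resolutions of ACM sets of points in $\popo$: the description of $\Sigma$ in terms of the tuple $\alpha_{\X}$ (cf.\ \cite[Ch.~4--5]{GV15}) and the formula $\deg_{\X}(P_{ij})=(\beta_j-1,\alpha_i-1)$ coming from Theorem~\ref{degP1xP1}, from which the equivalence is checked combinatorially as in \cite{GV08}. A minor additional point: in this direction you cannot invoke freeness of $M$ via the depth lemma (that presupposes $\Y'$ ACM); you should instead observe that $M\cong(I_{\Y'}+\wp_{ij})/\wp_{ij}$ is a nonzero, hence torsion-free, ideal of the domain $S/\wp_{ij}$, so that cyclic does imply free of rank one.
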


Finally, we recall the following formula to compute the degree tuple
of a fat point in an ACM fat point scheme in~$\popo$
(see \cite[Thm.~3.4]{GV12}).

\begin{theorem} \label{degP1xP1}
Let $\Y =\sum_{(k,l)\in D_\X}m_{kl}P_{kl}$ be an ACM fat point
scheme in~${\P^1\times\P^1}$, let $(i,j)\in D_\X$.
For every $\ell \in\{ 0,\ldots,m_{ij} -1\}$, we set
$$
a_{\ell} =
{\textstyle\sum\limits_{(e,j)\in D_\X}} \max\{m_{ej} - \ell,0\}
\quad \mbox{and} \quad
b_{\ell} =
{\textstyle\sum\limits_{(i,p)\in D_\X}} \max\{m_{ip} - \ell,0\}.
$$
Then we have
\[
\deg_\Y(P_{ij}) =
((a_{m_{ij}-1}-1,b_0-1),(a_{m_{ij}-2}-1,b_1-1),
\dots, (a_{0}-1,b_{m_{ij}-1}-1)).
\]
\end{theorem}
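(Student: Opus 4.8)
The plan is to compute the degrees of a set of minimal separators of $P_{ij}$ in $\Y$ directly, by realizing them as the degrees of a minimal bihomogeneous generating set of the module $M = I_{\Y'}/I_\Y$, where $\Y'$ is the scheme of Definition~\ref{fatseparator}; that these degrees do not depend on the choices made is exactly the well-definedness noted before Theorem~\ref{pnmsepfrombetti}. After a change of coordinates we may assume $P_{ij} = [1:0]\times[1:0]$, so that $\wp_{ij} = \langle X_1, Y_1\rangle$. By the defining conditions, a bihomogeneous $G$ is a separator precisely when $G \in \wp_{ij}^{m_{ij}-1}\setminus \wp_{ij}^{m_{ij}}$ and $G \in \wp_{kl}^{m_{kl}}$ for all $(k,l)\ne (i,j)$, i.e. when the image of $G$ in $R_\Y$ generates a nonzero part of $M$. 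Since $\Y$ is ACM, Theorem~\ref{thmSec1.4} provides a regular sequence $L_1, L_2$ with $\deg L_1=(1,0)$ and $\deg L_2 = (0,1)$, and the first step is to reduce the determination of the generator degrees of $M$ to the Artinian reduction $\bar R = R_\Y/\langle L_1, L_2\rangle$, where the Hilbert-function bookkeeping is finite.

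Second, I would exploit the product structure of $\popo$ to separate the two coordinate directions. Because $G \notin \wp_{ij}^{m_{ij}}$, its lowest-degree part at the origin is a nonzero form of degree $m_{ij}-1$ in $X_1, Y_1$; after taking $K$-linear combinations one may assume this leading part is a single monomial $X_1^{\,m_{ij}-1-s}Y_1^{\,s}$ with $s\in\{0,\dots,m_{ij}-1\}$. This splits the minimal separators into $m_{ij}$ families, one per value of $s$, accounting both for $t = m_{ij}$ and for the $m_{ij}$ entries of the degree tuple. For fixed $s$, I would show that the vanishing conditions along the vertical ruling $L_{R_j}$ (the points $P_{ej}$, $(e,j)\in D_\X$) control the $X$-degree, while those along the horizontal ruling $L_{Q_i}$ (the points $P_{ip}$, $(i,p)\in D_\X$) control the $Y$-degree. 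The factor $X_1^{\,m_{ij}-1-s}$ already absorbs part of the order of vanishing at each $P_{ej}$, leaving a residual requirement whose total is $\sum_e \max\{m_{ej}-\ell,0\}=a_\ell$ with $\ell = m_{ij}-1-s$; subtracting the one degree spent to keep $G\notin\wp_{ij}^{m_{ij}}$ yields the minimal admissible $X$-degree $a_\ell - 1$. The symmetric analysis in the $Y$-variables gives $Y$-degree $b_s - 1$, producing the pair $(a_{m_{ij}-1-s}-1,\,b_s-1)$.

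Third, I would establish two inequalities. For the upper bound it suffices to write down, for each $s$, an explicit separator of the claimed bidegree: a product of suitable powers of the ruling forms $L_{Q_e}$ and $L_{R_p}$ times $X_1^{\,m_{ij}-1-s}Y_1^{\,s}$, corrected by lower-order terms so that the vanishing at the points lying off both rulings is also met. For the lower bound (minimality) I would count, in the Artinian reduction, the dimension of the space of forms of a given bidegree meeting all vanishing conditions and compare it with $\dim_K M$; here the ACM hypothesis is what guarantees that the constraints from the two rulings are independent and that the off-ruling points force no extra degree. Relabelling the $m_{ij}$ families so that the $X$-degrees increase, equivalently the $Y$-degrees decrease, then puts the tuple into the order prescribed by Definition~\ref{fatseparator}(b) and reproduces the stated formula.

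The main obstacle is the lower bound together with this decoupling: one must prove that no separator of smaller bidegree exists and that the $m_{ij}$ families give genuinely distinct minimal generators, so that $t=m_{ij}$. This is where the ACM assumption is essential, since it lets the Hilbert function of $M$, and hence the generator degrees, be read off from the row and column multiplicity profiles alone, neutralizing the influence of the points off both rulings. Technically the cleanest route is to argue at the level of the minimal free resolution, relating the shifts in $\Sigma$ from Theorem~\ref{pnmsepfrombetti} to the numbers $a_\ell$ and $b_\ell$; checking that this numerical translation persists in full generality, including when $\Y'$ itself fails to be ACM, is the delicate point.
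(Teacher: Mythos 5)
The paper offers no proof of this statement: Theorem~\ref{degP1xP1} is quoted from \cite[Thm.~3.4]{GV12}, so there is no internal argument to compare yours against. Judged on its own terms, your text is an outline rather than a proof: every step that carries the mathematical weight is announced (``I would show'', ``I would count'') and then deferred, and you yourself flag the decisive points --- minimality of the constructed separators, the independence of the $m_{ij}$ families so that $t=m_{ij}$, and the decoupling of the two degree components --- as ``the delicate point''. These are exactly where the ACM hypothesis must enter, via the combinatorial characterization of ACM fat point schemes in \cite[Thm.~4.2]{GV04}; without invoking that structure, neither the existence of the ``lower-order corrections'' making your candidate separators vanish at the points off both rulings, nor the dimension count in the Artinian reduction, is established. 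In particular, nothing in the sketch rules out a separator of strictly smaller bidegree, which is the whole content of the theorem.

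There is also a concrete slip in the bookkeeping of your second paragraph. With $P_{ij}=[1:0]\times[1:0]$ one has $X_1=L_{Q_i}$, which vanishes at the points $P_{ip}$ of the horizontal ruling but at \emph{no} point $P_{ej}$ with $e\ne i$; so the factor $X_1^{m_{ij}-1-s}$ absorbs nothing of the required vanishing along the vertical ruling. It is the factor $Y_1^{s}=L_{R_j}^{s}$ that contributes order $s$ at every $P_{ej}$, leaving a residual $\max\{m_{ej}-s,0\}$ to be supplied by $(1,0)$-forms; together with the order $m_{ij}-1-s$ spent at $P_{ij}$ itself this forces $X$-degree at least $\sum_{(e,j)\in D_\X}\max\{m_{ej}-s,0\}-1=a_s-1$, not $a_{m_{ij}-1-s}-1$. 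Thus the separator with leading form $X_1^{m_{ij}-1-s}Y_1^{s}$ should have bidegree $(a_s-1,\,b_{m_{ij}-1-s}-1)$; indeed, in the equimultiple complete intersection case one checks that a separator of bidegree $(a_0-1,b_{m_{ij}-1}-1)$ with leading form $Y_1^{m_{ij}-1}$ does not exist. Your pairing of leading monomials with bidegrees is therefore reversed; the resulting multiset of degrees happens to coincide with the correct one, so the stated tuple survives, but the error shows that the heuristic driving the whole construction has not been made precise.
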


%
%

\bigbreak
\section{A Presentation of the K\"ahler Differential Module}

In the following we let $\Y$ be a fat point scheme
in~$\popo$ supported at $\X$.
For $i=0,1$, we denote the image of $X_i$ (respectively, $Y_i$)
in $R_{\Y}$ by $x_i$ (respectively, $y_i$).

By \cite[Lemma~1.2]{GV04}, there exist bihomogeneous elements
$L_1\in S$ of degree $(1,0)$ and $L_2\in S$ of degree $(0,1)$
which are non-zerodivisors for~$R_{\Y}$.
After linear changes of coordinates in $\{X_0,X_1\}$
and in $\{Y_0,Y_1\}$, we may
assume that $L_1 = X_0$ and $L_2 =Y_0$, and hence that
$x_0,y_0$ are non-zerodivisors in~$R_{\Y}$.
In the bigraded algebra
$$
R_{\Y}\otimes_{K}R_{\Y} =
{\textstyle\bigoplus\limits_{(i,j)\in\N^2}}
({\textstyle \bigoplus\limits_{
\begin{subarray} \ i_1+i_2 =i \\ j_1+j_2=j\end{subarray}}}
(R_{\Y})_{i_1,j_1}\otimes (R_{\Y})_{i_2,j_2})
$$
we have the bihomogeneous ideal $J=\ker(\mu)$,
where $\mu:\ R_{\Y}\otimes_{K} R_{\Y}\rightarrow R_{\Y}$
is the bihomogeneous $R_{\Y}$-linear map given by
$\mu(f\otimes g)=fg$.
Notice that
$$
J = \langle x_i\otimes1-1\otimes x_i,
y_i\otimes1-1\otimes y_i \mid i=0,1\rangle.
$$

\begin{definition}
The bigraded $R_{\Y}$-module $\Omega^1_{R_{\Y}/K}= J/J^2$
is called the {\it module of K\"{a}hler differentials}
of~$R_{\Y}/K$. The bihomogeneous $K$-linear map
$d_{R_{\Y}/K}:R_{\Y}\rightarrow \Omega^1_{R_{\Y}/K}$
given by $f\mapsto f\otimes 1-1\otimes f+J^2$ is called
the {\it universal derivation} of $R_{\Y}/K$.
\end{definition}

We collect some useful properties of the module
of K\"{a}hler differentials.
For a proof of these properties, see
\cite[Props.~4.12 and 4.13]{Ku86}.

\begin{proposition}\label{propSec2.2}
\begin{enumerate}
\item[(a)] There is an isomorphism of bigraded
    $R_{\Y}$-modules
    $$
    \Omega^1_{R_{\Y}/K} \cong
    \Omega^1_{S/K}/(dI_{\Y}+ I_{\Y}\Omega^1_{S/K})
    $$
    where $\deg(dX_i)=(1,0)$ and $\deg(dY_i)=(0,1)$,
    and where
    $$
    \qquad \quad d I_{\Y} =
    \langle \tfrac{\partial F}{\partial X_0}dX_0 +
    \tfrac{\partial F}{\partial X_1}dX_1+
    \tfrac{\partial F}{\partial Y_0}dY_0
    +\tfrac{\partial F}{\partial Y_1}dY_1
    \mid F \in I_{\Y}\rangle.
    $$

\item[(b)] The elements $\{dx_0,dx_1,dy_0,dy_1\}$ form
  a bihomogeneous system of generators
  of the bigraded $R_{\Y}$-module $\Omega^1_{R_{\Y}/K}$.
\end{enumerate}
\end{proposition}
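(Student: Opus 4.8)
The plan is to deduce both statements from the general theory of Kähler differentials applied to the tower $K \to S \to R_{\Y}$, specializing the two fundamental exact sequences of \cite{Ku86} to our bigraded situation. The starting point is that for the polynomial ring $S = K[X_0,X_1,Y_0,Y_1]$ the module $\Omega^1_{S/K}$ is free on the symbols $dX_0,dX_1,dY_0,dY_1$. I would equip it with the $\Z^2$-grading in which $\deg(dX_i)=(1,0)$ and $\deg(dY_i)=(0,1)$; this is exactly the grading that makes the universal derivation $d_{S/K}\colon S \to \Omega^1_{S/K}$ bihomogeneous of degree $(0,0)$, since $F \mapsto \tfrac{\partial F}{\partial X_0}dX_0 + \tfrac{\partial F}{\partial X_1}dX_1 + \tfrac{\partial F}{\partial Y_0}dY_0 + \tfrac{\partial F}{\partial Y_1}dY_1$ sends a form of bidegree $(a,b)$ to an element of bidegree $(a,b)$.

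For part (a), I would invoke the second fundamental (conormal) exact sequence attached to the surjection $S \twoheadrightarrow R_{\Y}$ with kernel $I_{\Y}$, namely
$$ I_{\Y}/I_{\Y}^2 \xrightarrow{\ \delta\ } R_{\Y} \otimes_S \Omega^1_{S/K} \longrightarrow \Omega^1_{R_{\Y}/K} \longrightarrow 0, $$
where $\delta(\overline{F}) = 1 \otimes d_{S/K}(F)$; this is the bigraded form of the conormal sequence in \cite[Prop.~4.12]{Ku86}. Since all the maps are built from $d_{S/K}$ and the projection $S \to R_{\Y}$, they are bihomogeneous of degree $(0,0)$, so this is a sequence of bigraded $R_{\Y}$-modules and $\Omega^1_{R_{\Y}/K}$ is the cokernel of $\delta$. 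The next step is to rewrite the middle term: because $\Omega^1_{S/K}$ is free, $R_{\Y} \otimes_S \Omega^1_{S/K} \cong \Omega^1_{S/K}/I_{\Y}\Omega^1_{S/K}$. Under this identification the image of $\delta$ is the submodule generated by the classes of $d_{S/K}(F)$ for $F \in I_{\Y}$, which is exactly $(dI_{\Y} + I_{\Y}\Omega^1_{S/K})/I_{\Y}\Omega^1_{S/K}$. Passing to cokernels then yields the claimed isomorphism $\Omega^1_{R_{\Y}/K} \cong \Omega^1_{S/K}/(dI_{\Y} + I_{\Y}\Omega^1_{S/K})$.

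For part (b), I would read the statement off from the surjectivity in the conormal sequence: the images $dx_0,dx_1,dy_0,dy_1$ of the free generators $dX_0,dX_1,dY_0,dY_1$ generate $R_{\Y} \otimes_S \Omega^1_{S/K}$ over $R_{\Y}$, and the surjection onto $\Omega^1_{R_{\Y}/K}$ carries them to the corresponding differentials. By the grading fixed above they are bihomogeneous of bidegrees $(1,0),(1,0),(0,1),(0,1)$, so they form a bihomogeneous system of generators.

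The step requiring the most care is the identification of $\im(\delta)$ in part (a): a priori $dI_{\Y}$ involves $d_{S/K}(F)$ for \emph{all} $F \in I_{\Y}$, whereas I want $dI_{\Y} + I_{\Y}\Omega^1_{S/K}$ to be controlled by the differentials of a finite generating set. This is handled by the Leibniz rule: if $F = \sum_k G_k F_k$ with $F_1,\dots,F_s$ generating $I_{\Y}$, then $d_{S/K}(F) = \sum_k \big(G_k\, d_{S/K}(F_k) + F_k\, d_{S/K}(G_k)\big)$, and each term $F_k\, d_{S/K}(G_k)$ lies in $I_{\Y}\Omega^1_{S/K}$. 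Hence modulo $I_{\Y}\Omega^1_{S/K}$ the submodule $dI_{\Y}$ is generated by $d_{S/K}(F_1),\dots,d_{S/K}(F_s)$, which simultaneously pins down $\im(\delta)$ and shows that the construction descends correctly to the bigraded quotient. Everything else is a formal consequence of the freeness of $\Omega^1_{S/K}$ and the universal property of the Kähler differential module, so no genuinely hard estimate is involved.
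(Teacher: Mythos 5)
Your proposal is correct and follows essentially the same route as the paper, which simply cites the conormal sequence results of Kunz (\cite[Props.~4.12 and 4.13]{Ku86}) that you spell out: the second fundamental exact sequence for $S\twoheadrightarrow R_{\Y}$, the freeness of $\Omega^1_{S/K}$ to identify the middle term with $\Omega^1_{S/K}/I_{\Y}\Omega^1_{S/K}$, and the identification of $\im(\delta)$ with $(dI_{\Y}+I_{\Y}\Omega^1_{S/K})/I_{\Y}\Omega^1_{S/K}$. Your additional remarks on the bigrading and the Leibniz-rule reduction to a finite generating set are accurate and consistent with how the paper uses these facts later.
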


\begin{remark}\label{remSec2.3}
\begin{itemize}
\item[(a)] The bigraded $S$-module $\Omega^1_{S/K}$ has
  the representation
  $$
  \Omega^1_{S/K} =
   SdX_0\oplus SdX_1 \oplus SdY_0
   \oplus SdY_1 \cong S^2(-1,0)\oplus S^2(0,-1).
  $$

\item[(b)] If $I_1,I_2$ are bihomogeneous ideals of $S$, then
  $$
  I_1\Omega^1_{S/K} \cap I_2\Omega^1_{S/K}
  = (I_1\cap I_2)\Omega^1_{S/K}
  $$
  (see \cite[Ch.~3, \S 7, Thm.~7.4(i)]{Mat87}).
\end{itemize}
\end{remark}

For convenience, we introduce the following notion.

\begin{definition}
Let $\Y = \sum_{(i,j)\in D_\X} m_{ij}P_{ij}$
be a fat point scheme in~$\popo$.
The fat point scheme
$\V = \sum_{(i,j)\in D_\X} (m_{ij}+1)P_{ij}$
is called the {\it thickening} of~$\Y$.
\end{definition}

In analogy with~\cite[Thm.~1.7]{KLL15}, we have the
following presentation of the module of K\"{a}hler differentials
of $R_\Y/K$ when $\Y$ is a fat point scheme in $\popo$.

\begin{theorem} \label{generpropSec2.5}
Let $\Y = \sum_{(i,j)\in D_\X} m_{ij}P_{ij}$ be a fat point
scheme in~$\popo$, and let $\V$ be the thickening of~$\Y$.
Then the sequence of bigraded $R_{\Y}$-modules
$$
0\longrightarrow I_{\Y}/I_{\V} \longrightarrow
R_{\Y}^2(-1,0)\oplus R_{\Y}^2(0,-1)
\longrightarrow \Omega^1_{R_{\Y}/K}\longrightarrow 0
$$
is exact.
\end{theorem}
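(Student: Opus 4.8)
The plan is to realize all three modules as subquotients of $\Omega^1_{S/K}$ and then reduce the exactness to a single local statement about leading forms in characteristic zero. By Proposition~\ref{propSec2.2}(a) and Remark~\ref{remSec2.3}(a) I would identify $\Omega^1_{R_\Y/K}$ with $\Omega^1_{S/K}/(dI_\Y + I_\Y\Omega^1_{S/K})$ and the middle module with $M:=\Omega^1_{S/K}/I_\Y\Omega^1_{S/K}\cong R_\Y^2(-1,0)\oplus R_\Y^2(0,-1)$. The right-hand map $\phi\colon M\to\Omega^1_{R_\Y/K}$ is then the canonical surjection, so it is automatically surjective and $\ker(\phi)=(dI_\Y+I_\Y\Omega^1_{S/K})/I_\Y\Omega^1_{S/K}$. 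For the left-hand map I would take the map $\psi\colon I_\Y/I_\V\to M$ induced by the universal derivation, $\overline F\mapsto dF+I_\Y\Omega^1_{S/K}$. Before proceeding I would record two elementary facts: since $I_\Y\subseteq\wp_{ij}^{m_{ij}}$ and each $m_{ij}\ge 1$, we have $I_\Y^2\subseteq\wp_{ij}^{2m_{ij}}\subseteq\wp_{ij}^{m_{ij}+1}$, so $I_\Y^2\subseteq I_\V$ and $I_\Y/I_\V$ is genuinely an $R_\Y$-module; and for $F\in I_\Y$, $G\in S$ the Leibniz rule gives $d(GF)\equiv G\,dF\pmod{I_\Y\Omega^1_{S/K}}$, which shows at once that $\psi$ is $R_\Y$-linear and that $\operatorname{im}(\psi)$ is $R_\Y$-stable.

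Well-definedness of $\psi$ and exactness at $M$ are then formal. Using Remark~\ref{remSec2.3}(b) I would write $I_\Y\Omega^1_{S/K}=\bigcap_{(i,j)}\wp_{ij}^{m_{ij}}\Omega^1_{S/K}$, which reduces the claim $dF\in I_\Y\Omega^1_{S/K}$ for $F\in I_\V$ to the pointwise assertion that every partial derivative of an element of $\wp_{ij}^{m_{ij}+1}$ lies in $\wp_{ij}^{m_{ij}}$; after a linear change of the $X$- and of the $Y$-coordinates (under which $dF$ transforms coordinate-freely) this is immediate for $\wp_{ij}=\langle X_1,Y_1\rangle$. This makes $\psi$ well defined. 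For exactness at $M$, the Leibniz computation above shows that $\operatorname{im}(\psi)$ coincides with the image of the submodule $dI_\Y$ in $M$, which is exactly $\ker(\phi)$.

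The real content, and the step I expect to be the main obstacle, is the injectivity of $\psi$. Unwinding the definitions, $\psi(\overline F)=0$ means $F\in I_\Y$ and $dF\in I_\Y\Omega^1_{S/K}$, i.e.\ all four partial derivatives of $F$ lie in $I_\Y$; I must conclude $F\in I_\V$. Invoking Remark~\ref{remSec2.3}(b) once more, it suffices to treat one point at a time, and after a coordinate change the task becomes the following local claim: if $\wp=\langle X_1,Y_1\rangle$, $m\ge 1$, $F\in\wp^m$, and $\partial F/\partial X_1,\ \partial F/\partial Y_1\in\wp^m$, then $F\in\wp^{m+1}$. To prove this I would filter by $\wp$-adic order and write $F=F_m+(\text{order}>m)$, where $F_m$ collects the monomials of total degree $m$ in $X_1,Y_1$. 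The order-$(m-1)$ parts of $\partial F/\partial X_1$ and $\partial F/\partial Y_1$ are precisely $\partial F_m/\partial X_1$ and $\partial F_m/\partial Y_1$, so the hypothesis forces both of these to vanish. Regarding $F_m$ as homogeneous of degree $m$ in $X_1,Y_1$ over $K[X_0,Y_0]$ and applying Euler's identity $X_1\,\partial F_m/\partial X_1+Y_1\,\partial F_m/\partial Y_1=m\,F_m$ yields $m\,F_m=0$, hence $F_m=0$ because $\operatorname{char}K=0$ and $m\ge 1$; thus $F\in\wp^{m+1}$. Intersecting over all points of the support gives $F\in I_\V$, which establishes injectivity and, together with the previous paragraph, the asserted short exact sequence. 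The points that require care are the coordinate-reduction step (legitimate because $dF$ and the submodules $\wp_{ij}^k\Omega^1_{S/K}$ are coordinate-free and each $\wp_{ij}$ is a complete intersection of a $(1,0)$- and a $(0,1)$-form) and the observation that it is precisely the positivity of the multiplicities and the characteristic-zero Euler step that drive the argument, consistent with the failure of exactness for more general schemes noted in Example~\ref{counterex}.
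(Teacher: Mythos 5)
Your proposal is correct, and its overall architecture coincides with the paper's: the same identifications $\Omega^1_{R_\Y/K}\cong\Omega^1_{S/K}/(dI_\Y+I_\Y\Omega^1_{S/K})$ and $R_\Y^2(-1,0)\oplus R_\Y^2(0,-1)\cong\Omega^1_{S/K}/I_\Y\Omega^1_{S/K}$, the same map induced by the universal derivation, the same use of Remark~\ref{remSec2.3}(b) to reduce well-definedness and injectivity to one point of the support at a time, and the same recognition that injectivity is the only non-formal step. Where you genuinely diverge is in how that local injectivity statement is proved. The paper argues in the forward direction: it takes $F\in\wp^m\setminus\wp^{m+1}$, expands $F$ explicitly as a sum of monomials $X_1^iY_1^jG_{i,j}$ of $\wp$-order $m$ plus higher-order terms, computes $dF$ modulo $\wp^m\Omega^1_{S/K}$ term by term, and invokes Macaulay's Basis Theorem to certify that the surviving coefficient of $dX_1$ or of $dY_1$ is a nonzero polynomial outside $\wp^m$. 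You instead prove the contrapositive local claim (all partials in $\wp^m$ forces $F\in\wp^{m+1}$) by passing to the leading form $F_m$ in the $\wp$-adic filtration and applying Euler's identity $X_1\partial_{X_1}F_m+Y_1\partial_{Y_1}F_m=mF_m$, so that $mF_m=0$ and characteristic zero kills $F_m$. Your route is shorter and more conceptual: it isolates exactly where $\operatorname{char}K=0$ and $m\ge 1$ enter, and it avoids the monomial bookkeeping and the appeal to Macaulay's Basis Theorem. The paper's computation, on the other hand, produces an explicit witness (a specific nonzero coefficient of $dX_1$ or $dY_1$) showing $dF\notin\wp^m\Omega^1_{S/K}$, which is slightly more informative but less transparent. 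You also make explicit the small but necessary observation that $I_\Y^2\subseteq I_\V$, so that $I_\Y/I_\V$ really is an $R_\Y$-module, which the paper leaves implicit. Both arguments are complete; yours is a legitimate and arguably cleaner alternative for the key step.
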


In the proof of this theorem we use the following lemma.

\begin{lemma}\label{lemforpropSec2.5}
In the setting of Theorem~\ref{generpropSec2.5}, let
$\varphi: I_{\Y}/I_{\V} \rightarrow
\Omega^1_{S/K}/I_{\Y}\Omega^1_{S/K}$
be the map given by
$\varphi(F+I_{\V})=dF+I_{\Y}\Omega^1_{S/K}$
for all $F\in I_{\Y}$.
Then the map $\varphi$ is well-defined, bihomogeneous
of degree $(0,0)$,
and $R_{\Y}$-linear.
\end{lemma}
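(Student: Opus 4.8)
The plan is to realize $\varphi$ as the map induced on quotients by the universal derivation $d\colon S\to\Omega^1_{S/K}$, so that the content of the lemma reduces to controlling how $d$ interacts with the vanishing ideals $I_{\Y}$ and $I_{\V}$. Additivity is immediate from the additivity of $d$, and the bigraded structure is easy: since $\deg(dX_i)=(1,0)$ and $\deg(dY_i)=(0,1)$, for a bihomogeneous $F$ of degree $(p,q)$ the element $dF=\sum_i\tfrac{\partial F}{\partial X_i}dX_i+\sum_i\tfrac{\partial F}{\partial Y_i}dY_i$ is again bihomogeneous of degree $(p,q)$; hence $d$, and therefore $\varphi$, preserves bidegree, i.e.\ is bihomogeneous of degree $(0,0)$. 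So the two substantial points are well-definedness and $R_{\Y}$-linearity.

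For well-definedness I first note that $I_{\V}\subseteq I_{\Y}$ (because $\wp_{ij}^{m_{ij}+1}\subseteq\wp_{ij}^{m_{ij}}$), so $\varphi$ makes sense on representatives, and the issue is independence of the choice of representative: I must show $dG\in I_{\Y}\Omega^1_{S/K}$ for every $G\in I_{\V}$. The key step, and the main obstacle, is the differential estimate that differentiating an element of a power of the prime $\wp_{ij}$ drops the exponent by at most one, i.e.\ $G\in\wp_{ij}^{m_{ij}+1}$ implies $dG\in\wp_{ij}^{m_{ij}}\Omega^1_{S/K}$. Since $\wp_{ij}$ is generated by a $(1,0)$-form $\ell_1$ and a $(0,1)$-form $\ell_2$ (so that $d\ell_1,d\ell_2$ have constant coefficients), I would write $G=\sum_{a+b=m_{ij}+1}\ell_1^a\ell_2^b H_{ab}$ and apply the Leibniz rule: each resulting summand lies in $\wp_{ij}^{m_{ij}}\Omega^1_{S/K}$, the terms $H_{ab}\,d(\ell_1^a\ell_2^b)$ because $d(\ell_1^a\ell_2^b)\in\wp_{ij}^{m_{ij}}\Omega^1_{S/K}$, and the terms $\ell_1^a\ell_2^b\,dH_{ab}$ because $\ell_1^a\ell_2^b\in\wp_{ij}^{m_{ij}+1}\subseteq\wp_{ij}^{m_{ij}}$. (Equivalently one reduces to $\wp_{ij}=\langle X_1,Y_1\rangle$ via a linear change of coordinates.) Applying this for every $(i,j)\in D_{\X}$ to $G\in I_{\V}=\bigcap_{(i,j)}\wp_{ij}^{m_{ij}+1}$ gives $dG\in\bigcap_{(i,j)}\wp_{ij}^{m_{ij}}\Omega^1_{S/K}$, and Remark~\ref{remSec2.3}(b), iterated over the finitely many points, identifies this intersection with $\bigl(\bigcap_{(i,j)}\wp_{ij}^{m_{ij}}\bigr)\Omega^1_{S/K}=I_{\Y}\Omega^1_{S/K}$, as required.

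Finally, for the module structures and $R_{\Y}$-linearity, I would first check that source and target are genuine $R_{\Y}=S/I_{\Y}$-modules: for the target this is clear, and for $I_{\Y}/I_{\V}$ it follows from $I_{\Y}^2\subseteq I_{\V}$, which holds because $I_{\Y}\subseteq\wp_{ij}^{m_{ij}}$ forces $I_{\Y}^2\subseteq\wp_{ij}^{2m_{ij}}\subseteq\wp_{ij}^{m_{ij}+1}$ for every $(i,j)$ (using $m_{ij}\ge1$). Then $R_{\Y}$-linearity is a one-line Leibniz computation: for $g\in S$ and $F\in I_{\Y}$ one has $d(gF)=g\,dF+F\,dg$, and since $F\in I_{\Y}$ the term $F\,dg$ lies in $I_{\Y}\Omega^1_{S/K}$, so $\varphi\bigl(\overline{g}(F+I_{\V})\bigr)=g\,dF+I_{\Y}\Omega^1_{S/K}=\overline{g}\,\varphi(F+I_{\V})$. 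I expect the exponent-drop estimate for $d$ to be the only nontrivial ingredient; everything else is bookkeeping with the Leibniz rule and the intersection identity of Remark~\ref{remSec2.3}(b).
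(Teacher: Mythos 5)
Your proposal is correct and follows essentially the same route as the paper: the exponent-drop estimate $d(\wp_{ij}^{m_{ij}+1})\subseteq\wp_{ij}^{m_{ij}}\Omega^1_{S/K}$ via the Leibniz rule on the generators $\ell_1^a\ell_2^b$, the intersection identity of Remark~\ref{remSec2.3}(b) to conclude well-definedness, and the Leibniz rule again for $R_{\Y}$-linearity. Your extra check that $I_{\Y}^2\subseteq I_{\V}$ (so that $I_{\Y}/I_{\V}$ is genuinely an $R_{\Y}$-module) is a small point the paper leaves implicit, but it does not change the argument.
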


\begin{proof}
For $(i,j)\in D_\X$ we let $\wp_{ij}$ be the associated ideal
of~$P_{ij}\in \X=\supp(\Y)$.
First we check that the map~$\varphi$ is well defined.
Let $F_1, F_2\in I_{\Y}$ be such that $F_1-F_2\in I_{\V}$.
Then, for every $P_{ij}=Q_i\times R_j \in \X$,
we have $F_1-F_2\in \wp_{ij}^{m_{ij}+1}$.
Since $\wp_{ij} = \langle L_{Q_i}, L_{R_j}\rangle$, we have
$\wp_{ij}^{m_{ij}+1} = \langle L_{Q_i}^{m_{ij}+1},
L_{Q_i}^{m_{ij}}L_{R_j},\dots, L_{R_j}^{m_{ij}+1}\rangle$.
So, we get
\[
dF_1-dF_2 \in d(\wp_{ij}^{m_{ij}+1}) \subset
\langle L_{Q_i}^{m_{ij}},L_{Q_i}^{m_{ij}-1}L_{R_j},
\dots, L_{R_j}^{m_{ij}}\rangle\Omega^1_{S/K}
= \wp_{ij}^{m_{ij}}\Omega^1_{S/K}.
\]
Hence we see that
\[
dF_1 - dF_2 \in \bigcap_{(i,j)\in D_\X}
\wp_{ij}^{m_{ij}}\Omega^1_{S/K}.
\]
Because $\Omega^1_{S/K}$ is a free $S$-module of rank~$4$,
Remark~\ref{remSec2.3} yields that
\[
dF_1 - dF_2 \in \bigcap_{(i,j)\in D_\X}
\wp_{ij}^{m_{ij}}\Omega^1_{S/K}
= I_{\Y}\Omega^1_{S/K}.
\]

Moreover, it is clearly true that the map $\varphi$ is
bihomogeneous of degree $(0,0)$.
Now we prove that the map~$\varphi$ is $R_{\Y}$-linear.
For $F_1, F_2 \in I_{\Y}$ and $G_1,G_2 \in S$, we have
$$
\begin{aligned}
\varphi(G_1F_1+G_2F_2+I_{\V})
&= d(G_1F_1+G_2F_2) + I_{\Y}\Omega^1_{S/K}\\
&= G_1dF_1+G_2dF_2 + I_{\Y}\Omega^1_{S/K}\\
&= (G_1+I_{\Y})\cdot\varphi(F_1+I_{\V})+(G_2+I_{\Y})
\cdot\varphi(F_2+I_{\V}).
\end{aligned}
$$
Hence the conclusion follows.
\end{proof}

\begin{proof}[Proof of Theorem~\ref{generpropSec2.5}]
Let $\varphi: I_{\Y}/I_{\V} \rightarrow
\Omega^1_{S/K}/I_{\Y}\Omega^1_{S/K}$
be the $R_{\Y}$-linear map given in Lemma~\ref{lemforpropSec2.5}.
We see that
$$
\im(\varphi)=\varphi(I_{\Y}/I_{\V}) =
(dI_{\Y}+I_{\Y}\Omega^1_{S/K})/ I_{\Y}\Omega^1_{S/K},
$$
and so
\[
(\Omega^1_{S/K}/I_{\Y}\Omega^1_{S/K}) / \im(\varphi)
\cong \Omega^1_{S/K}/(dI_{\Y}+I_{\Y}\Omega^1_{S/K})
\cong \Omega^1_{R_{\Y}/K}.
\]
In addition, we have
$$
R_{\Y}^2(-1,0)\oplus R_{\Y}^2(0,-1)
\cong \Omega^1_{S/K}/I_{\Y} \Omega^1_{S/K}.
$$
Hence it is enough to prove that the $R_{\Y}$-linear
map $\varphi$ is an injection.
To this end, let $F \in I_{\Y}\setminus I_{\V}$ be
a bihomogeneous form of degree $(k,l) \in \N^2$.
We need to prove $\varphi(F)=dF+ I_{\Y}\Omega^1_{S/K}\ne 0$
or $dF\notin I_{\Y}\Omega^1_{S/K}$.
Since $I_{\Y}\Omega^1_{S/K} = \bigcap_{(i,j)\in D_\X}
\wp_{ij}^{m_{ij}}\Omega^1_{S/K}$,
it suffices to show that
$dF \notin \wp_{ij}^{m_{ij}}\Omega^1_{S/K}$
for some $(i,j)\in D_\X$.

We have $I_{\Y} = \bigcap_{(i,j)\in D_\X} \wp_{ij}^{m_{ij}}$ and
$I_{\V} = \bigcap_{(i,j)\in D_\X} \wp_{ij}^{m_{ij}+1}$, and hence
it follows from $F \in I_{\Y}\setminus I_{\V}$ that there is
a point $P_{i_0,j_0} \in \X$ such that $F\in
\wp_{i_0j_0}^{m_{i_0j_0}}$,
but $F \notin \wp_{i_0j_0}^{m_{i_0j_0}+1}$.
Since $x_0,y_0$ are non-zerodivisors of~$R_{\Y}$,
w.l.o.g. we may assume that
\[
P_{i_0j_0} = P = [1:0]\times [1:0] \in \popo.
\]
Set $m = m_{i_0j_0}$ and write
$\wp_{i_0j_0} = \wp = \langle X_1,Y_1\rangle \subseteq S$.
Notice that
$\wp^{t} = \langle X_1^{i}Y_1^{j} \mid i+j = t \rangle$
for all $t\ge 1$.
Since $F \in \wp^{m} \setminus \wp^{m+1}$ and $\deg(F)=(k,l)$,
we have the following representation of $F$:
$$
F =
\sum_{\begin{subarray}{l}\
        i+j=m+1\\
       (i,j)\preceq(k,l)
      \end{subarray}}
       X_1^iY_1^j F_{i,j} +
\sum_{\begin{subarray}{l} \
       i+j=m\\(i,j)\preceq(k,l)
      \end{subarray}}
       X_1^iY_1^j G_{i,j}
$$
where $F_{i,j} \in S$ with $i+j = m+1$ and
$\deg(F_{i,j}) = (k-i,l-j)$,
and where $G_{i,j} \in K[X_0,Y_0]$ with $i+j=m$
and $\deg(G_{i,j})=(k-i,l-j)$.
We write $G_{i,j} = a_{i,j} X_0^{k-i}Y_0^{l-j}$ with
$a_{i,j}\in K$.
Since $F \notin \wp^{m+1}$, not all elements
$a_{i,j}$ are zero. Setting
$\widetilde{F}=\sum_{i+j=m+1,(i,j)\preceq(k,l)} X_1^iY_1^j F_{i,j}$,
we have $d\widetilde{F} \in \wp^{m}\Omega^1_{S/K}$ and
$$
\begin{aligned}
dF &= d\widetilde{F} +
d({\textstyle \sum\limits_{
\begin{subarray}{l}\
i+j=m\\(i,j)\preceq(k,l)
\end{subarray}}}
 X_1^iY_1^j G_{i,j}) \\
&=  d\widetilde{F} +
{\textstyle \sum\limits_{
\begin{subarray}{l}\
i+j=m\\(i,j)\preceq(k,l)
\end{subarray}}}
d(a_{i,j}X_1^iY_1^j X_0^{k-i}Y_0^{l-j}) \\
&=  \big(d\widetilde{F} +
{\textstyle \sum\limits_{
\begin{subarray}{l}\
i+j=m\\(i,j)\preceq(k,l)
\end{subarray}}}
a_{i,j}X_1^iY_1^j d(X_0^{k-i}Y_0^{l-j})\big) +
{\textstyle \sum\limits_{
\begin{subarray}{l}\
i+j=m\\(i,j)\preceq(k,l)
\end{subarray}}}
a_{i,j}X_0^{k-i}Y_0^{l-j} d(X_1^iY_1^j).
\end{aligned}
$$
Note that
\[
d\widetilde{F} +
{\textstyle \sum\limits_{
\begin{subarray}{l}\
i+j=m\\(i,j)\preceq(k,l)
\end{subarray}}}
a_{i,j}X_1^iY_1^j d(X_0^{k-i}Y_0^{l-j})
\in \wp^{m}\Omega^1_{S/K}.
\]
So, in order to prove $dF \notin \wp^{m}\Omega^1_{S/K}$,
it suffices to prove that
\[
w :=
{\textstyle \sum\limits_{
\begin{subarray}{l}\
i+j=m\\(i,j)\preceq(k,l)
\end{subarray}}}
a_{i,j}X_0^{k-i}Y_0^{l-j} d(X_1^iY_1^j)
\notin \wp^{m}\Omega^1_{S/K}.
\]
Observe that
$$
\begin{aligned}
w &=
{\textstyle \sum\limits_{
\begin{subarray}{l}\
i+j=m\\(i,j)\preceq(k,l)
\end{subarray}}}
a_{i,j}X_0^{k-i}Y_0^{l-j}
(i X_1^{i-1}Y_1^jdX_1 + jX_1^{i}Y_1^{j-1} dY_1) \\
&= \big(
{\textstyle \sum\limits_{
\begin{subarray}{l}\
i+j=m\\(i,j)\preceq(k,l)
\end{subarray}}}
ia_{i,j}X_0^{k-i}Y_0^{l-j}X_1^{i-1}Y_1^j\big)dX_1+
\big(
{\textstyle \sum\limits_{
\begin{subarray}{l}\
i+j=m\\(i,j)\preceq(k,l)
\end{subarray}}}
ja_{i,j}X_0^{k-i}Y_0^{l-j}X_1^{i}Y_1^{j-1}\big) dY_1.
\end{aligned}
$$
The ideal $\wp^{m}$ is a monomial ideal of $S$ generated by
the set $\{ X_1^{i}Y_1^{j} \mid i+j = m\}$.
Macaulay's Basis Theorem yields that for all
$(i,j)\in\N^2$ with $i+j = m$ and $(i,j)\preceq(k,l)$ we have
$X_0^{k-i}Y_0^{l-j}X_1^{i-1}Y_1^{j} \notin \wp^{m}$
if $i>0$ and $X_0^{k-i}Y_0^{l-j}X_1^{i}Y_1^{j-1} \notin \wp^{m}$
if $j>0$, and that any non-zero polynomial which has support
contained in the set of these terms does not belong to $\wp^m$.
Because not all elements $a_{i,j}$ are zero
and $i+j=m\ge 1$, it follows that
\[
\big(
{\textstyle \sum\limits_{
\begin{subarray}{l}\
i+j=m\\(i,j)\preceq(k,l)
\end{subarray}}}
ia_{i,j}X_0^{k-i}Y_0^{l-j}X_1^{i-1}Y_1^j\big)dX_1
\notin \wp^{m}\Omega^1_{S/K}
\]
or
\[
\big(
{\textstyle \sum\limits_{
\begin{subarray}{l}\
i+j=m\\(i,j)\preceq(k,l)
\end{subarray}}}
ja_{i,j}X_0^{k-i}Y_0^{l-j}X_1^{i}Y_1^{j-1}\big)dY_1
\notin \wp^{m}\Omega^1_{S/K}
\]
Note that $\Omega^1_{S/K}$ is free $S$-module with
basis $\{dX_0,dX_1,dY_0,dY_1\}$.
So, we get $w \notin \wp^m\Omega^1_{S/K}$.
Hence we have shown that
$dF \notin \wp^m\Omega^1_{S/K} =
\wp_{i_0j_0}^{m_{i_0j_0}}\Omega^1_{S/K}$.
This means $\varphi(F)\ne 0$ for any
$F \in I_{\Y}\setminus I_{\V}$.
Therefore $\varphi$ is an injection, as wanted.
\end{proof}

Using Theorem \ref{generpropSec2.5}, we get a relation
between the Hilbert function of $\Omega^1_{R_{\Y}/K}$
and of~$\Y$ and $\V$.

\begin{corollary}\label{corSec2.6}
Let $\Y = \sum_{(i,j)\in D_\X} m_{ij}P_{ij}$
be a fat point scheme in~$\popo$, and let
$\V$ be the thickening of~$\Y$. Then the Hilbert
function of~$\Omega^1_{R_{\Y}/K}$ satisfies
$$
\HF_{\Omega^1_{R_{\Y}/K}}(i,j)=2\HF_{\Y}(i-1,j)+
2\HF_{\Y}(i,j-1)+\HF_{\Y}(i,j)-\HF_{\V}(i,j)
$$
for all $(i,j)\in\Z^2$.
\end{corollary}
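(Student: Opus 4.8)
The plan is to extract the Hilbert function identity directly from the short exact sequence of Theorem~\ref{generpropSec2.5} by passing to bigraded dimensions. For any short exact sequence $0 \to A \to B \to C \to 0$ of bigraded $R_{\Y}$-modules, restricting to the degree-$(i,j)$ component yields a short exact sequence of finite-dimensional $K$-vector spaces, whence $\dim_K B_{i,j} = \dim_K A_{i,j} + \dim_K C_{i,j}$; equivalently $\HF_C = \HF_B - \HF_A$ as functions on $\Z^2$. Applying this with $A = \Iy/\Iv$, $B = R_{\Y}^2(-1,0)\oplus R_{\Y}^2(0,-1)$, and $C = \Omega^1_{R_{\Y}/K}$ reduces the claim to computing the Hilbert functions of $A$ and $B$.

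For the middle term $B$, I would invoke the shifted-grading convention $\HF_{R_{\Y}(-a,-b)}(i,j) = \HF_{R_{\Y}}(i-a,j-b) = \HF_{\Y}(i-a,j-b)$ together with the additivity of $\HF$ over direct sums. This gives $\HF_B(i,j) = 2\,\HF_{\Y}(i-1,j) + 2\,\HF_{\Y}(i,j-1)$.

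For $A = \Iy/\Iv$, note that since $\V$ is the thickening of~$\Y$ we have $\Iv \subseteq \Iy$, so in each bidegree $(\Iy/\Iv)_{i,j} = (\Iy)_{i,j}/(\Iv)_{i,j}$ and hence $\HF_A(i,j) = \dim_K(\Iy)_{i,j} - \dim_K(\Iv)_{i,j}$. Writing $\dim_K(\Iy)_{i,j} = \dim_K S_{i,j} - \HF_{\Y}(i,j)$ and likewise for~$\V$, the common term $\dim_K S_{i,j}$ cancels, leaving $\HF_A(i,j) = \HF_{\V}(i,j) - \HF_{\Y}(i,j)$.

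Substituting these two computations into $\HF_C = \HF_B - \HF_A$ yields
$$
\HF_{\Omega^1_{R_{\Y}/K}}(i,j) = 2\,\HF_{\Y}(i-1,j) + 2\,\HF_{\Y}(i,j-1) - \bigl(\HF_{\V}(i,j) - \HF_{\Y}(i,j)\bigr),
$$
which is exactly the asserted formula after regrouping the last two terms. I do not expect a genuine obstacle here; the only delicate point is the bookkeeping, namely correctly tracking the degree shifts $(-1,0)$ and $(0,-1)$ in $B$ and fixing the sign in the identity $\HF_{\Iy/\Iv} = \HF_{\V} - \HF_{\Y}$, which hinges on the inclusion $\Iv \subseteq \Iy$ (and not the reverse).
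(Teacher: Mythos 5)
Your proposal is correct and follows exactly the route the paper intends: the corollary is stated as an immediate consequence of Theorem~\ref{generpropSec2.5}, obtained by additivity of Hilbert functions along the short exact sequence, the degree shifts in the middle term, and the identity $\HF_{I_{\Y}/I_{\V}}=\HF_{\V}-\HF_{\Y}$ coming from $I_{\V}\subseteq I_{\Y}$. Nothing is missing.
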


In the following example, we show that the exact sequence 
in Theorem~\ref{generpropSec2.5} does, in general, not hold true 
if $\Y$ is a 0-dimensional subscheme of~$\popo$ which is not
a fat point scheme.

\begin{example}\label{counterex}
Let $\Y$ be the 0-dimensional subscheme of~$\popo$ defined by the 
bihomogeneous ideal $I_\Y = \langle X_1^2,Y_0-Y_1\rangle$ of~$S$.
We have ${\rm Supp}(\Y)=\{[1:0]\times[1:1]\}$ and 
$\Y$ is neither a reduced scheme nor a fat point scheme in~$\popo$.
Let~$\V$ be the scheme defined by $(I_\Y)^2$. 
The Hilbert functions of $\Y$ and $\V$ are
$$
\HF_{\Y} =
\left[ \begin{smallmatrix}
1 & 1 & 1 & 1 & \dots \\
2 & 2 & 2 & 2 & \dots \\
2 & 2 & 2 & 2 & \dots \\
2 & 2 & 2 & 2 & \dots \\
\svdots & \svdots & \svdots & \svdots & \sddots
\end{smallmatrix}\right] \quad \mbox{and} \quad
\HF_{\V} =
\left[ \begin{smallmatrix}
1 & 2 & 2 & 2 & \dots \\
2 & 4 & 4 & 4 &  \dots \\
3 & 5 & 5 & 5 & \dots \\
4 & 6 & 6 & 6 & \dots \\
\svdots & \svdots & \svdots & \svdots & \sddots
\end{smallmatrix}\right].
$$
So, the Hilbert matrix $\HF$ computed by the formula 
$\HF(i,j) = 2\HF_{\Y}(i-1,j)+
2\HF_{\Y}(i,j-1)+\HF_{\Y}(i,j)-\HF_{\V}(i,j)$ is
$$
\HF =
\left[ \begin{smallmatrix}
0 & 1 & 1 & 1 & \dots \\
2 & 4 & 4 & 4 &  \dots \\
3 & 5 & 5 & 5 & \dots \\
2 & 4 & 4 & 4 & \dots \\
\svdots & \svdots & \svdots & \svdots & \sddots
\end{smallmatrix}\right].
$$
However, when we compute the Hilbert function 
of $\Omega^1_{R_{\Y}/K}$ by its definition, we get 
$$
\HF_{\Omega^1_{R_{\Y}/K}} =
\left[ \begin{smallmatrix}
0 & 1 & 1 & 1 & \dots \\
2 & 4 & 4 & 4 &  \dots \\
3 & 5 & 5 & 5 & \dots \\
3 & 5 & 5 & 5 & \dots \\
\svdots & \svdots & \svdots & \svdots & \sddots
\end{smallmatrix}\right].
$$
Thus this implies $\HF_{\Omega^1_{R_{\Y}/K}} \ne \HF$,
and hence we do not have the 
exact sequence as in Theorem~\ref{generpropSec2.5}
in this case.
\end{example}

Our next proposition collects some special values
of the Hilbert function of $\Omega^1_{R_{\Y}/K}$.

\begin{proposition}
Let $\Y = \sum_{(i,j)\in D_\X} m_{ij}P_{ij}$
be a fat point scheme in~$\popo$ with support~$\X =\supp(\Y)$.
The following statements hold true.
\begin{enumerate}
\item[(a)]  $\HF_{\Omega^1_{R_{\Y}/K}}(i,j) = 0$ if
    $(i,j) \nsucceq (0,0)$ or $(i,j)=(0,0)$.
  
\item[(b)] For $(i,j)\in \N^2$, if $\HF_{I_\Y}(i,j)=0$, then
    $$
    \HF_{\Omega^1_{R_{\Y}/K}}(i,j) = 4ij + 2i +2j.
    $$
    
\item[(c)] If $\HF_{\Y}(i_0-1,j_0) = \HF_{\Y}(i_0,j_0)
    =\HF_{\Y}(i_0,j_0-1)$, then for $(i,j)\succeq(i_0,j_0)$
    we have
    $$
    \HF_{\Omega^1_{R_{\Y}/K}}(i,j) = 5\HF_\Y(i,j)-\HF_\V(i,j)
    $$
    where $\V$ is the thickening of~$\Y$.
\end{enumerate}
\end{proposition}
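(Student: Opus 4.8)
The plan is to deduce all three parts from the Hilbert function formula of Corollary~\ref{corSec2.6},
$$
\HF_{\Omega^1_{R_{\Y}/K}}(i,j)=2\HF_{\Y}(i-1,j)+2\HF_{\Y}(i,j-1)+\HF_{\Y}(i,j)-\HF_{\V}(i,j),
$$
combined with the fact that $x_0$ and $y_0$ are non-zerodivisors on~$R_{\Y}$. For part~(a) I would argue from the generators: by Proposition~\ref{propSec2.2}(b) the module $\Omega^1_{R_{\Y}/K}$ is generated by $dx_0,dx_1$ in bidegree $(1,0)$ and by $dy_0,dy_1$ in bidegree $(0,1)$, and since $R_{\Y}$ is concentrated in bidegrees $\succeq(0,0)$, every nonzero bihomogeneous element of $\Omega^1_{R_{\Y}/K}$ has its bidegree in $\{(i,j):i\ge 1,\,j\ge 0\}\cup\{(i,j):i\ge 0,\,j\ge 1\}$. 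This union is exactly $\{(i,j)\succeq(0,0)\}\setminus\{(0,0)\}$, so the module vanishes whenever $(i,j)\nsucceq(0,0)$ or $(i,j)=(0,0)$. (Alternatively, evaluating the Corollary at $(0,0)$ gives $0+0+\HF_{\Y}(0,0)-\HF_{\V}(0,0)=1-1=0$.)

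For part~(b) I would first observe that $\HF_{I_{\Y}}(i,j)=0$ forces $\HF_{I_{\Y}}(i',j')=0$ for every $(0,0)\preceq(i',j')\preceq(i,j)$: a nonzero $F\in(I_{\Y})_{i',j'}$ would produce the nonzero element $X_0^{i-i'}Y_0^{j-j'}F\in(I_{\Y})_{i,j}$, since $S$ is a domain and $I_{\Y}$ is an ideal, contradicting $\HF_{I_{\Y}}(i,j)=0$. Hence $\HF_{\Y}$ coincides with $\HF_{S}$ at $(i,j)$, $(i-1,j)$ and $(i,j-1)$, yielding $(i+1)(j+1)$, $i(j+1)$ and $(i+1)j$. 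Moreover $I_{\V}\subseteq I_{\Y}$ gives $\HF_{I_{\V}}(i,j)=0$, so $\HF_{\V}(i,j)=(i+1)(j+1)$ as well. Substituting into the Corollary, the terms $\HF_{\Y}(i,j)$ and $\HF_{\V}(i,j)$ cancel and one is left with $2i(j+1)+2(i+1)j=4ij+2i+2j$.

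Part~(c) is the heart of the matter. By the Corollary, the claimed identity is equivalent to the equalities $\HF_{\Y}(i-1,j)=\HF_{\Y}(i,j)=\HF_{\Y}(i,j-1)$ holding for \emph{all} $(i,j)\succeq(i_0,j_0)$, whereas the hypothesis only provides them at the single bidegree $(i_0,j_0)$; the real work is to propagate this stabilization across the whole quadrant. I would do this via the quotient ring $\bar R_{\Y}:=R_{\Y}/x_0R_{\Y}$. Because $x_0$ is a non-zerodivisor, multiplication by $x_0$ is injective, so $\dim_K(\bar R_{\Y})_{i,j}=\HF_{\Y}(i,j)-\HF_{\Y}(i-1,j)$, and the hypothesis $\HF_{\Y}(i_0-1,j_0)=\HF_{\Y}(i_0,j_0)$ says precisely that $(\bar R_{\Y})_{i_0,j_0}=0$. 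Now $\bar R_{\Y}$ is generated as a $K$-algebra by the images of $x_1,y_0,y_1$, and a monomial count shows $(\bar R_{\Y})_{i+1,j}=x_1(\bar R_{\Y})_{i,j}$ and $(\bar R_{\Y})_{i,j+1}=y_0(\bar R_{\Y})_{i,j}+y_1(\bar R_{\Y})_{i,j}$; hence once $(\bar R_{\Y})_{i,j}$ vanishes it vanishes at every larger bidegree, and a short induction on $(i-i_0)+(j-j_0)$ gives $\HF_{\Y}(i-1,j)=\HF_{\Y}(i,j)$ for all $(i,j)\succeq(i_0,j_0)$. Running the symmetric argument with $R_{\Y}/y_0R_{\Y}$, using that $y_0$ is a non-zerodivisor, yields $\HF_{\Y}(i,j-1)=\HF_{\Y}(i,j)$ on the same quadrant. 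Feeding both equalities into the Corollary collapses its right-hand side to $5\HF_{\Y}(i,j)-\HF_{\V}(i,j)$, as required. I expect the only delicate step to be the monomial/surjectivity bookkeeping for $\bar R_{\Y}$; parts~(a) and~(b) and the final substitution are routine.
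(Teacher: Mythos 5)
Your proof is correct and follows essentially the same route as the paper: all three parts are reduced to Corollary~\ref{corSec2.6}, with (a) obtained from the degrees of the generators $dx_0,dx_1,dy_0,dy_1$ and (b) from $I_{\V}\subseteq I_{\Y}$. The only real difference is in (c), where the paper simply cites \cite[Prop.~1.3]{GV04} for the stabilization of $\HF_{\Y}$ on the quadrant $(i,j)\succeq(i_0,j_0)$, while you reprove that fact directly via the Artinian reductions $R_{\Y}/x_0R_{\Y}$ and $R_{\Y}/y_0R_{\Y}$ (using that $x_0,y_0$ are non-zerodivisors, as arranged in Section~3); this is a correct, self-contained substitute for the citation.
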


\begin{proof}
Claim (a) follows from the representation of K\"{a}hler
differential module
$$
\Omega^1_{R_{\Y}/K}
= R_\Y dx_0 + R_\Y dx_1 + R_\Y dy_0 + R_\Y dy_1.
$$
Observe that $I_\V \subseteq I_\Y$ where
$\V = \sum_{(i,j)\in D_\X} (m_{ij}+1)P_{ij}$.
So, for $(i,j)\in \N^2$ such that $\HF_{I_\Y}(i,j)=0$
we have $\HF_{I_\V}(i,j)=0$ and
$\HF_\Y(i,j)= \HF_\V(i,j)= (i+1)(j+1)$.
Hence claim~(b) follows from Corollary~\ref{corSec2.6}.
Finally, claim~(c) is a consequence
of Corollary~\ref{corSec2.6}
and \cite[Prop.~1.3]{GV04}.
\end{proof}

Using the preceding results, we can calculate
the Hilbert function of the K\"ahler differential module
of a fat point scheme~$\Y$ on~$\popo$. Let us see an example.

\begin{example}\label{examS3.2}
Let $\X=\{P_{11},P_{12},P_{23},P_{31},P_{32}\}\subseteq\popo$
be a set of distinct points as considered in Example \ref{examS2}.
Let $\Y$ and $\V$ be the two fat point schemes
$$
\Y = 2P_{11}+2P_{12}+P_{23}+P_{31}+2P_{32}
~\mbox{and}~
\V = 3P_{11}+3P_{12}+2P_{23}+2P_{31}+3P_{32}.
$$

Based on the definition of~$\Omega^1_{R_{\Y}/K}$, or by applying
Corollary~\ref{corSec2.6}, we compute the
Hilbert function of~$\Omega^1_{R_{\Y}/K}$ and get
$$
\HF_{\Omega^1_{R_{\Y}/K}} =
\left[ \begin{smallmatrix}
0 & 2 & 4 & 6 & 8 & 9 & 8 & 7 & 7 & \dots \\
2 & 8 & 14 & 20 & 24 & 22 & 20 & 20 & 20 & \dots \\
4 & 14 & 24 & 32 & 31 & 28 & 28 & 28 & 28 & \dots \\
6 & 20 & 32 & 35 & 33 & 32 & 32 & 32 & 32 & \dots \\
8 & 24 & 31 & 33 & 33 & 32 & 32 & 32 & 32 & \dots \\
9 & 22 & 28 & 32 & 32 & 31 & 31 & 31 & 31 & \dots \\
8 & 20 & 28 & 32 & 32 & 31 & 31 & 31 & 31 & \dots \\
7 & 20 & 28 & 32 & 32 & 31 & 31 & 31 & 31 & \dots \\
7 & 20 & 28 & 32 & 32 & 31 & 31 & 31 & 31 & \dots \\
\svdots & \svdots & \svdots &\svdots & \svdots &
\svdots & \svdots & \svdots & \svdots & \sddots
\end{smallmatrix}\right]
$$
Let us explain some of these values.
Clearly, we have $\HF_{\Omega^1_{R_{\Y}/K}}(0,0)=0$.
For $(i,j)=(2,2)$, we have $(I_{\Y})_{2,2}=\langle0\rangle$.
This shows
$\HF_{\Omega^1_{R_{\Y}/K}}(2,2) = 24 = 4ij + 2i + 2j$.
For $(i,j)\succeq (4,4)$, we also see that
$\HF_{\Y}(3,4) = \HF_{\Y}(4,4) =\HF_{\Y}(4,3)$, and hence
$\HF_{\Omega^1_{R_{\Y}/K}}(i,j)
= 5 \HF_{\Y}(i,j) - \HF_{\V}(i,j)$.
In particular, for $(i,j) \succeq (5,5)$, we get
$\HF_{\Omega^1_{R_{\Y}/K}}(i,j) = 31
= 5 \HF_{\Y}(i,j) - \HF_{\V}(i,j)$.
\end{example}

%
%

\bigbreak
\section{The Hilbert Function of
the K\"{a}hler Differential Module}

Let us continue to use the setting of Section~2.
For a fat point scheme~$\Y$ in~$\popo$,
the authors of~\cite{GV04} associated to~$\Y$ two tuples $\alpha_{\Y}$
and $\beta_{\Y}$ in order to describe all but finitely many values
of the Hilbert function of~$R_{\Y}$.
In this section we show that the same tuples enable us
to describe all but a finite number of values of the
Hilbert function of the module of K\"{a}hler differentials
of~$\Y$.

Let $\pi_1 : \popo \rightarrow \P^1$ and
$\pi_2 : \popo \rightarrow \P^1$ be the projection morphisms
given by $P_1\times P_2 \mapsto P_1$ and
$P_1\times P_2 \mapsto P_2$, respectively.
Now we recall from \cite{GV04} the following notation
and definitions.

\begin{notation} \label{notationSec4.3}
Let $\Y = \sum_{(i,j)\in D_\X} m_{ij}P_{ij}$ be a fat
point scheme in~$\popo$ with support $\X =\supp(\Y)$,
and let $\pi_1(\X) =\{Q_1,\dots,Q_r\}$ and
$\pi_2(\X) = \{R_1,\dots,R_t\}$.

\begin{enumerate}
\item[(a)] For each $Q_i \in \pi_1(\X)$, we define
  $\Y_{1,Q_i} :=
  m_{ij_1}P_{ij_1}+ \cdots + m_{ij_{\nu_i}}P_{ij_{\nu_i}}$
  where $P_{ij_k} = Q_i\times R_{j_k}$ are those points of~$\X$
  whose first coordinate is $Q_i$. Set
  $$
  l_i := \max\{m_{ij_1},\dots,m_{ij_{\nu_i}}\}
  \quad \textrm{and} \quad
  \alpha_{Q_i} := (a_{i0},\dots,a_{il_i-1})
  $$
  where $a_{ik} := \sum_{e=1}^{\nu_i}\max\{m_{ij_e}-k,0\}$
  with $0\le k \le l_i-1$. By rearranging the elements
  of $(\alpha_{Q_1},\dots,\alpha_{Q_r})$
  in non-increasing order, we get the
  $(l_1+\cdots+l_r)$-tuple which is denoted by~$\alpha_{\Y}$.

\item[(b)] Suppose $\alpha_\Y = (\alpha_1,\dots,\alpha_l)$
  where $l = \sum_{i=1}^{r} l_i$.
  We define the {\em conjugate} of~$\alpha_{\Y}$
  $$
  \alpha^*_\Y = (\alpha^*_1,\alpha^*_2,\dots)
  \quad \textrm{with} \quad
  \alpha^*_i \!=\! \#\{\alpha_j \in \alpha_\Y
  \mid \alpha_j \ge i\}.
  $$
  Here $\alpha^*_i = 0$ for $i > \alpha_1$.

\item[(c)] Similarly,  for every $R_j \in \pi_2(\X)$,
  we set $\Y_{2,R_j} :=
  m_{i_1j}P_{i_1j}+\cdots+m_{i_{\nu'_i}j}P_{i_{\nu'_i}j}$
  where $P_{i_kj} = Q_{i_k}\times R_{j}$ are those points
  of~$\X$ whose second coordinate is $R_j$. We also set
  $$
  l'_j := \max\{m_{i_1j},\dots,m_{i_{\nu'_i}j}\}
  \quad \textrm{and} \quad
  \beta_{R_j} := (b_{j0},\dots,b_{jl'_j-1})
  $$
  where $b_{jk} := \sum_{e=1}^{\nu'_i}\max\{m_{i_ej}-k,0\}$ with
  $0\le k \le l'_j-1$.
  We let $\beta_{\Y}$ denote the $(l'_1+\cdots+l'_t)$-tuple
  which is obtained by rearranging the elements of
  $(\beta_{R_1},\dots,\beta_{R_t})$ in non-increasing order.

\item[(d)] Suppose that $\beta_\Y = (\beta_1,\dots,\beta_{l'})$
  where $l' = \sum_{j=1}^{t} l'_j$.
  We define the {\em conjugate} of~$\beta_{\Y}$
  $$
  \beta^*_\Y = (\beta^*_1,\beta^*_2,\dots)
  \quad \textrm{with} \quad
  \beta^*_j = \#\{\beta_k \in \beta_\Y \mid \beta_k \ge j\}.
  $$
  Note that here it is $\beta^*_j = 0$ for $j > \beta_1$.
\end{enumerate}
\end{notation}

With the above notation, we also define
$$
\hat{\alpha}_{\Y} := (a_{10}+\nu_1,\dots,a_{r0}+\nu_r)
\hbox{\quad and \quad }
\hat{\beta}_{\Y} := (b_{10}+\nu'_1,\dots,b_{t0}+\nu'_t).
$$

Let us calculate the tuples of Notation~\ref{notationSec4.3}
in a concrete case.

\begin{example}
Let $\Y =2P_{11}+2P_{12}+P_{23}+P_{31}+2P_{32}$
be the fat point scheme of~Example~\ref{examS3.2}.
We want to compute the tuples $\alpha_\Y$, $\hat{\alpha}_\Y$,
$\alpha^*_\Y$, $\beta_\Y$, $\hat{\beta}_\Y$ and
$\beta^*_\Y$.
We have $\X =\supp(\Y)$, $\pi_1(\X) = \{Q_1,Q_2,Q_3\}$
and $\pi_2(\X)=\{R_1,R_2,R_3\}$. Also, we have
$\Y_{1,Q_1} = 2P_{11}+2P_{12}$. Note that the support
of~$\Y_{1,Q_1}$ contains the points of~$\X$ which lie on
the horizontal ruling defined by $L_{Q_1}$ (see the figure).
\begin{center}
\begin{tikzpicture}[scale=.7, transform shape]
\filldraw[black] (1,3) circle (3pt);
\node (a11) at (0.7,3.5) {$2$};
\filldraw[black] (2,3) circle (3pt);
\node (a11) at (1.7,3.5) {$2$};
\filldraw[black] (3,2) circle (3pt);
\node (a11) at (3.7,2.5) {$1$};
\filldraw[black] (1,1) circle (3pt);
\node (a11) at (1.7,1.5) {$2$};
\filldraw[black] (2,1) circle (3pt);
\node (a11) at (0.7,1.5) {$1$};
\node (a1) at (0,3) {$L_{Q_1}$};
\node (a2) at (0,2) {$L_{Q_2}$};
\node (a3) at (0,1) {$L_{Q_3}$};
\node (a4) at (1,4) {$L_{R_1}$};
\node (a5) at (2,4) {$L_{R_2}$};
\node (a6) at (3,4) {$L_{R_3}$};
\node (a7) at (5.5,3) {$\rightarrow \supp(\Y_{1,Q_1})$};
\node (b7) at (5.5,2) {$\vdots$};
\node (a81) at (1,0) {$\downarrow$};
\node (a82) at (1,-0.5) {$\supp(\Y_{2,R_1})$};
\node (b8) at (2.5,-0.5) {$\cdots$};
\draw[-] (3,0.5) -- (3, 3.5);
\draw[-] (2,0.5) -- (2, 3.5);
\draw[-] (1,0.5) -- (1, 3.5);
\draw[-] (0.5,3) -- (3.5,3);
\draw[-] (0.5,2) -- (3.5,2);
\draw[-] (0.5,1) -- (3.5,1);
\end{tikzpicture}
\end{center}
We set $l_1 = \max\{m_{11}, m_{12}\} = 2$, $l_2=m_{21}=1$,
and $l_3=\max\{m_{31}, m_{32}\}=2$. Then
$$
\begin{aligned}
a_{10} &= \sum_{e=1}^2 \max\{m_{1e}-0,0\} = 4,\quad
a_{11} &= \sum_{e=1}^2 \max\{m_{1e}-1,0\} = 2, \\
a_{20} &= \sum_{e=1} \max\{m_{2e}-0,0\} = 1,\quad
a_{21} &= \sum_{e=1} \max\{m_{2e}-1,0\} = 0, \\
a_{30} &= \sum_{e=1}^2 \max\{m_{1e}-0,0\} = 3,\quad
a_{31} &= \sum_{e=1}^2 \max\{m_{1e}-1,0\} = 1. \\
\end{aligned}
$$
So, we have $\alpha_{Q_1} = (4,2),\alpha_{Q_2} = (1)$
and $\alpha_{Q_3} = (3,1)$.
Thus we obtain $\alpha_{\Y} = (4,3,2,1,1)$ and
$\hat{\alpha}_\Y=(a_{10}+2, a_{20}+1, a_{30}+2)=(6, 2, 5)$.
Moreover, it follows that $\alpha^*_{\Y}=(5,3,2,1,0,0,\dots)$.

Similarly, for $R_1,R_2,R_3 \in \pi_2(\X)$, we find
$l'_1 = 2$, $l'_2 = 2$ and $l'_3 = 1$, respectively.
Also, we have $\beta_{R_1} = (3,1)$, $\beta_{R_2} = (4,2)$
and $\beta_{R_3} = (1)$.
Hence we get $\beta_{\Y} = (4,3,2,1,1)$ and
$\hat{\beta}_\Y= (5, 6, 2)$.
Also, we have $\beta^*_{\Y} = (5,3,2,1,0,0,\dots)$.
\end{example}

Using these notations, we can give a formula for the
Hilbert function of~$\Omega^1_{R_\Y/K}$,
as the following proposition shows.

\begin{proposition} \label{PropSec3.3}
Let $\Y = \sum_{(i,j)\in D_\X} m_{ij}P_{ij}$
be a fat point scheme in $\popo$ with associated tuples
$\hat{\alpha}_\Y$, $\alpha^*_\Y$, $\hat{\beta}_\Y$, and
$\beta^*_\Y$ as given in Notation~\ref{notationSec4.3}.
\begin{enumerate}
\item[(a)] For all $j \in \N$, if $i \ge l+r-1$ and let
  $h = \min\{j+1,\max\{a \in \hat{\alpha}_\Y\}\}$, then we have
  $$
  \HF_{\Omega^1_{R_\Y/K}}(i,j) =
  4\sum_{k=1}^{j}\alpha^*_k  + 2\alpha^*_{j+1}
  - \sum_{k=1}^{h} \#\{a \in \hat{\alpha}_\Y \mid a \ge k \}.
  $$

\item[(b)] For all $i \in \N$, if $j \ge l'+t-1$ and let
  $h' = \min\{i+1,\max\{b \in \hat{\beta}_\Y\}\}$, then we have
  $$
  \HF_{\Omega^1_{R_\Y/K}}(i,j) =
  4\sum_{k=1}^{i}\beta^*_k  + 2\beta^*_{i+1}
  - \sum_{k=1}^{h'} \#\{b \in \hat{\beta}_\Y \mid b \ge k \}.
  $$
\end{enumerate}
\end{proposition}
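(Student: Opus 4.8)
The plan is to reduce everything to the formula of Corollary~\ref{corSec2.6}, which already expresses $\HF_{\Omega^1_{R_\Y/K}}(i,j)$ in terms of the four numbers $\HF_\Y(i-1,j)$, $\HF_\Y(i,j-1)$, $\HF_\Y(i,j)$ and $\HF_\V(i,j)$. Thus it suffices to control these four Hilbert values in the stated range, and for this I would invoke the description of the Hilbert function of a fat point scheme in $\popo$ from~\cite{GV04}: once the first index is large enough, $\HF_\Y(i,j)$ becomes independent of~$i$ and equals the partial sum $\sum_{k=1}^{j+1}\alpha^*_k$ of the conjugate tuple $\alpha^*_\Y$ (and symmetrically for~$\beta^*_\Y$ in the other direction). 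The precise threshold is that this formula holds as soon as $i$ exceeds the length of $\alpha_\Y$ minus one. Applied to~$\V$, whose $\alpha$-tuple has length $l+r$, this is exactly the hypothesis $i \ge l+r-1$, which simultaneously forces $\HF_\Y(i-1,j)$, $\HF_\Y(i,j-1)$, $\HF_\Y(i,j)$ and $\HF_\V(i,j)$ into their stable ranges, since $l-1 \le l+r-2$ guarantees $i-1\ge l-1$. In particular $\HF_\Y(i-1,j)=\HF_\Y(i,j)$ in this range.

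The decisive step is a combinatorial lemma comparing the invariants of~$\Y$ with those of its thickening~$\V$. Since passing from~$\Y$ to~$\V$ raises every multiplicity $m_{ij}$ by one, for each first coordinate $Q_i$ one has $a^{\V}_{i,0}=\sum_e(m_{ij_e}+1)=a_{i0}+\nu_i$ and $a^{\V}_{i,k}=a_{i,k-1}$ for $k\ge 1$. Hence the tuple $\alpha_\V$ is obtained by adjoining to $\alpha_\Y$ the entries of $\hat\alpha_\Y=(a_{10}+\nu_1,\dots,a_{r0}+\nu_r)$ and re-sorting in non-increasing order, which yields the conjugate identity
$$
\alpha^*_{\V,k}=\alpha^*_k+\#\{a\in\hat\alpha_\Y\mid a\ge k\}
\qquad\text{for all }k\ge 1.
$$
I expect this bookkeeping---keeping the reindexing $a^\V_{i,k}=a_{i,k-1}$ straight and confirming that conjugating a concatenated, re-sorted partition simply adds the two counting functions---to be the main technical point of the argument.

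With the lemma in hand, the rest is substitution and simplification. Writing $\HF_\Y(i-1,j)=\HF_\Y(i,j)=\sum_{k=1}^{j+1}\alpha^*_k$, $\HF_\Y(i,j-1)=\sum_{k=1}^{j}\alpha^*_k$ and $\HF_\V(i,j)=\sum_{k=1}^{j+1}\alpha^*_{\V,k}$, I would plug these into Corollary~\ref{corSec2.6} and collect terms: the $\alpha^*_k$ contributions combine to $4\sum_{k=1}^{j}\alpha^*_k+2\alpha^*_{j+1}$, while the thickening correction supplies $-\sum_{k=1}^{j+1}\#\{a\in\hat\alpha_\Y\mid a\ge k\}$. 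Finally, since $\#\{a\in\hat\alpha_\Y\mid a\ge k\}=0$ for every $k>\max\hat\alpha_\Y$, the upper limit of this last sum may be replaced by $h=\min\{j+1,\max\hat\alpha_\Y\}$ without changing its value, giving the asserted formula in part~(a). Part~(b) then follows by the identical argument with the roles of the two rulings interchanged, that is, replacing $(i,\alpha_\Y,\hat\alpha_\Y,\alpha^*,l,r)$ throughout by $(j,\beta_\Y,\hat\beta_\Y,\beta^*,l',t)$.
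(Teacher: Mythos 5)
Your proposal is correct and follows essentially the same route as the paper: both reduce to Corollary~\ref{corSec2.6}, both prove the key combinatorial identity $a'_{i0}=a_{i0}+\nu_i$, $a'_{ik}=a_{i\,k-1}$ showing that $\alpha_\V$ is the re-sorted concatenation of $\alpha_\Y$ and $\hat\alpha_\Y$ (hence $\alpha^*_{\V,k}=\alpha^*_k+\hat\alpha^*_k$), and both then substitute the stable values of $\HF_\Y$ and $\HF_\V$ from~\cite{GV04} for $i\ge l+r-1$ and simplify. The only cosmetic difference is that the paper truncates the correction sum at $h=\min\{j+1,\alpha'_1\}$ from the outset, whereas you truncate at the end by noting the counting function vanishes beyond $\max\hat\alpha_\Y$.
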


\begin{proof}
We need only prove claim~(a) because claim (b) follows
in the same way. Note that $\X = \supp(\Y)$.
For every $Q_i\in \pi_1(\X)$, we write
$\alpha_{Q_i} = (a_{i0},\dots,a_{i\, l_i-1})$ as given
in Notation~\ref{notationSec4.3}.a.
Let $\V= \sum_{(i,j)\in D_\X} (m_{ij}+1)P_{ij}$
be the thickening of~$\Y$ and
$\tilde{\alpha}_\V:=(\alpha'_{Q_1},\dots,\alpha'_{Q_r})$
the tuple associated to $\V$ where
$\alpha'_{Q_i}=(a'_{i0},\dots,a'_{il_i})$. We see that
$$
a'_{i0}
= a_{i0}+\nu_i  \text{ and } a'_{ik}
= \sum_{e=1}^{\nu_i}\max\{m_{ij_e} +1-k,0\}
= a_{i\, k-1}
$$
for $k =1,...,l_i$, and hence
$\alpha'_{Q_i}=(a_{i0}+\nu_i,a_{i0},\dots,a_{i\, l_i-1})$.
Consequently, if
$\tilde{\alpha}_{\Y} := (\alpha_{Q_1},\dots,\alpha_{Q_r})$
is the tuple associated to~$\Y$, and in a non-increasing
ordering we order all the elements of the union
$\{a \mid a\in \tilde{\alpha}_{\Y}\}
\cup \{a_{10}+\nu_1,\dots,a_{r0}+\nu_r\},$
we get the tuple $\alpha_\V$ associated to $\V$. Consider
$\alpha_\Y =(\alpha_1,\dots,\alpha_l), \alpha^*_{\Y}
=(\alpha^*_1,\dots,\alpha^*_{\alpha_1}),$ and
$\alpha_\V = (\alpha'_1,\dots,\alpha'_{l+r})$.

For $i =1,\dots,\alpha_1$, we observe that
$$
\begin{aligned}
(\alpha'_i)^* &= \#\{\alpha'_j \in \alpha_\V
\mid \alpha'_j \ge i\} \\
&= \#\{\alpha_j \in \alpha_\Y \mid \alpha_j \ge i\} +
\#\{a \in \hat{\alpha}_\Y \mid a \ge i\}\\
&= \alpha_i^* +\#\{a \in \hat{\alpha}_\Y \mid a \ge i\}\\
&=\alpha^*_i+\hat{\alpha}^*_i.
\end{aligned}
$$
If $\alpha_1<i\le \alpha'_1$ we have
$$
(\alpha'_i)^*
= \#\{\alpha'_i \in \alpha_\V \mid \alpha'_i \ge i\}
= \#\{\hat{\alpha}_j \in \hat{\alpha}_\Y
\mid {\hat{\alpha}_j} \ge i\}
=\hat{\alpha}^*_i,
$$
and if $i>\alpha'_1$ we have $(\alpha'_i)^*= 0$.
Hence, by \cite[Thm.~3.2]{GV04}, for $j \in\N$
and for $i \ge l+r-1$ we have
$$
\HF_\Y(i-1,j) =  \sum_{k=1}^{j+1}\alpha^*_k
$$
and
$$
\HF_\V(i,j) =  \sum_{k=1}^{j+1}\alpha^*_k +
\sum_{k=1}^{h} \#\{a \in \hat{\alpha}_\Y \mid a \ge k \}
= \sum_{k=1}^{j+1}\alpha^*_k +
\sum_{k=1}^{h}  \hat{\alpha}^*_ k
$$
where $h = \min\{j+1,\max\{a \in \hat{\alpha}_\Y\}\}
=\min\{j+1,{\alpha}'_1\}$.
Therefore, for $j \in\N$ and for $i \ge l+r-1$,
the desired formula of~$\HF_{\Omega^1_{R_{\Y}/K}}$
follows from Corollary~\ref{corSec2.6}.
\end{proof}

Based on this proposition, we can work out certain
values of the Hilbert function of~$\Omega^1_{R_\Y/K}$
explicitly.

\begin{corollary}
Let $\Y = \sum_{(i,j)\in D_\X} m_{ij}P_{ij}$
be a fat point scheme in~$\popo$
and let $\V$ be the thickening of $\Y$.
\begin{enumerate}
\item[(a)] For all $(i,j) \succeq (l+r-1,l'+t-1)$,
  we have
  $$
  \HF_{\Omega^1_{R_\Y/K}}(i,j) =
  4\sum_{(i,j)\in D_\X} \binom{m_{ij}+1}{2}
  - \sum_{(i,j)\in D_\X} (m_{ij}+1).
  $$

\item[(b)] If $i \ge l+r-1$ and $j< l'+t-1$, then
  $$
  \qquad
  \begin{aligned}
  \HF_{\Omega^1_{R_\Y/K}}&(i,j)
  = \HF_{\Omega^1_{R_\Y/K}}(l+r-1,j)\\
  &= 3\HF_\Y(l+r-1,j) + 2\HF_\Y(l+r-1,j-1)-\HF_\V(l+r-1,j).
  \end{aligned}
  $$

\item[(c)] If $i< l+r-1$ and $j \ge l'+t-1$, then
  $$
  \qquad
  \begin{aligned}
  \HF_{\Omega^1_{R_\Y/K}}&(i,j)
  = \HF_{\Omega^1_{R_\Y/K}}(i,l'+t-1) \\
  &= 2\HF_\Y(i-1,l'+t-1) +
  3\HF_\Y(i,l'+t-1) - \HF_\V(i,l'+t-1).
  \end{aligned}
  $$

\item[(d)] For $(i,j) \succeq (l,l')$, we have
  $$
  \qquad \quad
  \HF_{\Omega^1_{R_\Y/K}}(i,j)
  \ge \HF_{\Omega^1_{R_\Y/K}}(i+1,j)
  \, \textrm{and} \,
  \HF_{\Omega^1_{R_\Y/K}}(i,j)
  \ge \HF_{\Omega^1_{R_\Y/K}}(i,j+1).
  $$
\end{enumerate}
\end{corollary}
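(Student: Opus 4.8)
The plan is to read everything off Corollary~\ref{corSec2.6}, which expresses
\[
\HF_{\Omega^1_{R_\Y/K}}(i,j)=2\HF_\Y(i-1,j)+2\HF_\Y(i,j-1)+\HF_\Y(i,j)-\HF_\V(i,j),
\]
together with the explicit formulas of Proposition~\ref{PropSec3.3}; the only external input is the stabilization of the fat point Hilbert functions of $\Y$ and $\V$ to their degrees, in the form recorded in~\cite{GV04}. For part~(a) I would note that on the range $(i,j)\succeq(l+r-1,l'+t-1)$ both $\HF_\Y$ and $\HF_\V$ have reached their stable values $\deg(\Y)=\sum_{(i,j)\in D_\X}\binom{m_{ij}+1}{2}$ and $\deg(\V)=\sum_{(i,j)\in D_\X}\binom{m_{ij}+2}{2}$ (for $\V$ this region is exactly $(l_\V-1,l'_\V-1)$, since the thickening has $l_\V=l+r$ and $l'_\V=l'+t$). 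Substituting into the displayed formula gives $5\deg(\Y)-\deg(\V)$, and the pointwise identity $5\binom{m+1}{2}-\binom{m+2}{2}=(m+1)(2m-1)=4\binom{m+1}{2}-(m+1)$ then yields the stated value.

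For parts~(b) and~(c) the first equality is immediate: the formula of Proposition~\ref{PropSec3.3}(a) does not involve $i$, so $\HF_{\Omega^1_{R_\Y/K}}(i,j)=\HF_{\Omega^1_{R_\Y/K}}(l+r-1,j)$ for all $i\ge l+r-1$, and symmetrically for~(c) via Proposition~\ref{PropSec3.3}(b). For the second equality in~(b) I would apply Corollary~\ref{corSec2.6} at $(l+r-1,j)$ and use that, by the input of Proposition~\ref{PropSec3.3}, $\HF_\Y$ is already constant in its first argument for that argument $\ge l+r-2$; thus $\HF_\Y(l+r-2,j)=\HF_\Y(l+r-1,j)$, and the two equal $\HF_\Y$-terms combine into $3\HF_\Y(l+r-1,j)$. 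Part~(c) is the mirror statement with the roles of the two factors interchanged.

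Part~(d) is where the real work lies. I would compute the two first differences directly from Corollary~\ref{corSec2.6}: writing $\epsilon_M(a,b)=\HF_M(a+1,b)-\HF_M(a,b)$, one gets
\[
\HF_{\Omega^1_{R_\Y/K}}(i,j)-\HF_{\Omega^1_{R_\Y/K}}(i+1,j)
=-2\epsilon_\Y(i-1,j)-2\epsilon_\Y(i,j-1)-\epsilon_\Y(i,j)+\epsilon_\V(i,j),
\]
and analogously in the $j$-direction. Since $R_\V$ admits non-zerodivisors of degrees $(1,0)$ and $(0,1)$ (again~\cite{GV04}), the term $\epsilon_\V\ge 0$ carries the correct sign; the obstruction is the three negative $\HF_\Y$-differences. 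The key observation is that these all vanish on the relevant range. Indeed, the defining sums give $\alpha_1=\max_i\sum_j m_{ij}\le\sum_j\max_i m_{ij}=l'$ and, symmetrically, $\beta_1\le l$, simply because each column sum is dominated term by term by the sum of the column maxima. Hence for $(i,j)\succeq(l,l')$ every argument occurring above lies in $(i',j')\succeq(l-1,l'-1)$, where I claim $\HF_\Y(i',j')=\deg(\Y)$; granting this, all $\epsilon_\Y$-terms are $0$ and the difference reduces to $\epsilon_\V\ge 0$, proving both inequalities at once.

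The main obstacle is therefore precisely the sharp stabilization statement $\HF_\Y(i,j)=\deg(\Y)$ for all $(i,j)\succeq(l-1,l'-1)$. The half-plane results feeding Proposition~\ref{PropSec3.3} give this only for $i\ge l+r-1$ (combined with $j\ge\alpha_1-1$, which follows from $\alpha_1\le l'$) and, symmetrically, for $j\ge l'+t-1$; they leave uncovered the finite \emph{corner} $[l-1,l+r-1)\times[l'-1,l'+t-1)$, and simple examples (e.g.\ a reduced $3\times 3$ grid, where stabilization occurs already at $(l-1,l'-1)=(2,2)$ rather than at $(l+r-1,l'+t-1)=(5,5)$) show this two-dimensional bound is genuinely stronger than the one-variable bounds and is attained. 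I expect to control this corner by the stabilization of fat-point Hilbert functions in $\popo$, bounding the codimension of $(I_\Y)_{i,j}$ by analyzing the multi-jet evaluation map $S_{i,j}\to\bigoplus_{(i,j)\in D_\X}\mathcal{O}_{P_{ij}}/\wp_{ij}^{m_{ij}}$ in the corner bidegrees; once its surjectivity there is established, the argument of the previous paragraph completes part~(d).
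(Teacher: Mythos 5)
Your overall route is the same as the paper's: everything is read off Corollary~\ref{corSec2.6} together with the stabilization and half-plane formulas for fat point Hilbert functions from \cite{GV04}, and your treatment of parts (a)--(c), including the identity $5\binom{m+1}{2}-\binom{m+2}{2}=4\binom{m+1}{2}-(m+1)$ behind (a) and the constancy-in-$i$ argument behind the first equalities in (b) and (c), is correct and matches the paper.

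The one genuine gap is in part (d): you correctly reduce both inequalities to the statement that $\HF_{\Y}(i',j')=\deg(\Y)$ for all $(i',j')\succeq(l-1,l'-1)$, but you then only \emph{announce} a proof of this (``I expect to control this corner \dots once its surjectivity there is established''), so the key lemma is left unestablished. In the paper this is precisely \cite[Cor.~3.4]{GV04}, which is simply cited. Moreover, the corner you single out as uncovered is in fact already reachable with the tools you are using: the half-plane formula of \cite[Thm.~3.2]{GV04} applied to $\Y$ itself is valid for $i\ge l-1$; the stronger bound $i\ge l+r-1$ appearing in the proof of Proposition~\ref{PropSec3.3} is only needed because there the formula must hold simultaneously for the thickening $\V$, whose first parameter is $l+r$. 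Combined with your own observation $\alpha_1\le l'$, this gives $\HF_{\Y}(i,j)=\sum_{k=1}^{j+1}\alpha^*_k=\sum_{k}\alpha_k=\deg(\Y)$ for all $(i,j)\succeq(l-1,l'-1)$, so no new evaluation-map argument is needed. Once this is filled in (or \cite[Cor.~3.4]{GV04} is cited), your computation $\HF_{\Omega^1_{R_\Y/K}}(i,j)=5\deg(\Y)-\HF_{\V}(i,j)$ on $(i,j)\succeq(l,l')$ together with the monotonicity of $\HF_{\V}$ completes (d) exactly as in the paper.
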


\begin{proof}
According to Proposition~\ref{PropSec3.3}, we have
$$
\HF_{\Omega^1_{R_\Y/K}}(i,j) =
\begin{cases}
\HF_{\Omega^1_{R_\Y/K}}(l+r-1,l'+t-1) &
\textrm{if}\,\, (i,j) \succeq (l+r-1,l'+t-1),\\
\HF_{\Omega^1_{R_\Y/K}}(l+r-1,j) &
\textrm{if}\,\, i\ge l+r-1, \\
\HF_{\Omega^1_{R_\Y/K}}(i,l'+t-1) &
\textrm{if}\,\, j \ge l'+t-1.
\end{cases}
$$
So, Corollary~\ref{corSec2.6} and \cite[Cor.~3.4]{GV04}
imply claims (b), (c) and
$\HF_{\Omega^1_{R_\Y/K}}(i,j)
= 5\sum_{(i,j)\in D_\X} \binom{m_{ij}+1}{2} -
\sum_{(i,j)\in D_\X} \binom{m_{ij}+2}{2}$
for all $(i,j) \succeq (l+r-1,l'+t-1)$.
Thus claim (a) also follows.
Furthermore, for $(i,j) \succeq (l,l')$ we see that
$\HF_{\Omega^1_{R_\Y/K}}(i,j)
= 5\HF_\Y(i,j) - \HF_\V(i,j)
= 5\sum_{(i,j)\in D_\X} \binom{m_{ij}+1}{2} - \HF_\V(i,j)$
by \cite[Cor.~3.4]{GV04}.
We also have $\HF_\V(i,j) \le \HF_\V(i+1,j)$
and $\HF_\V(i,j) \le \HF_\V(i,j+1)$ for all $(i,j)\in\N^2$
by \cite[Prop.~1.3(i)]{GV04},
and therefore claim~(d) holds true.
\end{proof}

\begin{remark}
The above corollary tells us that if we know the values
of $\HF_{\Omega^1_{R_\Y/K}}(l+r-1,j)$ for $j=0,...,l'+t$
and the values of $\HF_{\Omega^1_{R_\Y/K}}(i,l'+t-1)$
for $i=0,...,l+r$, then we know all but a finite numbers
of values of the Hilbert function of $\Omega^1_{R_\Y/K}$.
\end{remark}

\begin{example}
Let us go back to Example~\ref{examS3.2}.
We see that the fat point scheme~$\Y$ satisfies $l = l' =5$
and  $r=t=3$. For $(i,j) \succeq (7,7)$,
we have
$$
\HF_{\Omega^1_{R_\Y/K}}(i,j) = 31 = 4\sum_{(i,j)\in D_\X}
\binom{m_{ij}+1}{2} - \sum_{(i,j)\in D_\X} (m_{ij}+1).
$$
Moreover, for $(i,j)\succeq (5,5)$, we have
$\HF_{\Omega^1_{R_\Y/K}}(i,j)\ge\HF_{\Omega^1_{R_\Y/K}}(i+1,j)$
and $\HF_{\Omega^1_{R_\Y/K}}(i,j)
\ge \HF_{\Omega^1_{R_\Y/K}}(i,j+1)$.
\end{example}

%
%

\bigbreak
\section{Special Fat Point Schemes in $\popo$}\label{CI}

In this section we study cases in which the support $\X$
of the fat point scheme $\Y$ is either a complete intersection
or an almost complete intersection in~$\popo$.
Let us begin with the complete intersection case.

\begin{definition} Let~$I$ be an ideal
in $S=K[X_0,X_1,Y_0,Y_1]$.
\begin{enumerate}
\item[(a)] The ideal~$I$ is called a
{\it (bihomogeneous) complete intersection}
if it is generated by a (bihomogeneous) regular sequence.

\item[(b)] A set of distinct points~$\X$ in $\popo$ is called
a {\it complete intersection} if its vanishing ideal $I_{\X}$
is a bihomogeneous complete intersection.

\item[(c)] If the vanishing ideal of a set of distinct points
$\X$ in~$\popo$ is generated by two bihomogeneous polynomials
of degrees $(d_1,0)$ and $(0,d_2)$, where $d_1,d_2\ge 1$,
we say that $\X$ is a
complete intersection {\it of type $(d_1,d_2)$}
and write $CI(d_1,d_2)$.
\end{enumerate}
\end{definition}

\begin{remark}
Note that $\X$ is a complete intersection if and only if
it is a $CI(d_1,d_2)$ for some $d_1,d_2\ge 1$
(cf.~\cite[Thm.~1.2]{GMR92}).
Moreover, a complete intersection
is arithmetically Cohen-Macaulay and
\cite[Lemma~2.26]{GV15} provides an explicit description
of the bigraded minimal free resolution of its
bihomogeneous coordinate ring.
\end{remark}

Now we can apply the results of the previous sections to compute
the Hilbert function of $\Omega^1_{R_{\Y}/K}$ in the case that
$\Y$ is an equimultiple fat point scheme whose
support $\X$ is a complete intersection of type~$(d_1,d_2)$.
In the following, unless stated otherwise, we assume that
$\X$ is a set of distinct points in~$\popo$ which is a
$CI(d_1,d_2)$ with $d_1\le d_2$.
For more details about complete intersections in~$\popo$,
see \cite[Sec.~5.4]{GV15}.

The first easy example of a complete intersection is
the case when $\X$ is a $CI(1,d_2)$, i.e.,
when $\X$ is  a set of distinct points on a line.
In the following example we present a formula
for the Hilbert function of~$\Omega^1_{R_{\Y}/K}$
depending only on the tuple $\alpha_\Y$
when $\Y$ is a fat point scheme whose support $\X$ is
a set of $s$ points on a line.

\begin{example}
Let $\Y = m_{11}P_{11}+ m_{12}P_{12}+\cdots + m_{1s}P_{1s}$
be a fat point scheme in $\popo$ whose support is
contained in a line
defined by a form of degree $(1,0)$.
Suppose that $\alpha_\Y = (a_1,\dots,a_l)$ with
$l=\max\{m_{11},\dots,m_{1s}\}$ is the tuple associated
with~$\Y$. Let $\mathcal{Z}_k$ be a matrix of $k$
rows and infinitely many columns with all entries equal
to zero, and let
$$
\mathcal{A}_k =
\begin{bmatrix}
1 & 2 & \cdots & a_k-1 & a_k & a_k & \cdots \\
1 & 2 & \cdots & a_k-1 & a_k & a_k & \cdots \\
1 & 2 & \cdots & a_k-1 & a_k & a_k & \cdots \\
\vdots & \vdots & \vdots & \vdots &\vdots &\vdots &\ddots
\end{bmatrix}
$$
Then we have $\HF_{\Y}=\sum_{k=1}^{l}
\begin{bmatrix}
\mathcal{Z}_{k-1}\\ \mathcal{A}_k
\end{bmatrix}$
by \cite[Thm.~2.2]{GV04}.
Note that the associated tuple of the scheme
$\V = (m_{11}+1)P_{11}+(m_{12}+1)P_{12}+
\cdots+(m_{1s}+1)P_{1s}$
is given by $\alpha_\V = (a_1+s,a_1,\dots,a_l)$.
By letting
$$
\mathcal{A} =
\begin{bmatrix}
1 & 2 & \cdots & a_1+s-1 & a_1+s & a_1+s & \cdots \\
1 & 2 & \cdots & a_1+s-1 & a_1+s & a_1+s & \cdots \\
1 & 2 & \cdots & a_1+s-1 & a_1+s & a_1+s & \cdots \\
\vdots & \vdots & \vdots & \vdots & \vdots & \vdots & \ddots
\end{bmatrix}
$$
we have
$$
\HF_\V = \mathcal{A} + \sum_{k=1}^{l}
\begin{bmatrix}
\mathcal{Z}_{k}\\ \mathcal{A}_k
\end{bmatrix}.
$$
Now we let $\mathcal{B}_k$ be the matrix obtained from
$\mathcal{A}_k$ by adding one zero column to $\mathcal{A}_k$
in the first position. Notice that $\HF_{\Y}(i,j) = 0$
if $(i,j) \nsucceq (0,0)$.
An application of Corollary~\ref{corSec2.6} yields
$$
\begin{aligned}
\HF_{\Omega^1_{R_{\Y}/K}} &= \sum_{k=1}^{l}2
 \begin{bmatrix}
 \mathcal{Z}_{k}\\ \mathcal{A}_k
 \end{bmatrix}
+\sum_{k=1}^{l}
 2\begin{bmatrix}
 \mathcal{Z}_{k-1} \\ \mathcal{B}_k
 \end{bmatrix}
+\sum_{k=1}^{l}
 \begin{bmatrix}
 \mathcal{Z}_{k-1}\\ \mathcal{A}_k
 \end{bmatrix}
- \mathcal{A} - \sum_{k=1}^{l}
 \begin{bmatrix}
 \mathcal{Z}_{k}\\ \mathcal{A}_k
 \end{bmatrix} \\
&= \sum_{k=1}^{l}
 \begin{bmatrix}
 \mathcal{Z}_{k}\\ \mathcal{A}_k
 \end{bmatrix}
+\sum_{k=1}^{l}
 2\begin{bmatrix}
 \mathcal{Z}_{k-1} \\ \mathcal{B}_k
 \end{bmatrix}
+\sum_{k=1}^{l}
 \begin{bmatrix}
 \mathcal{Z}_{k-1}\\ \mathcal{A}_k
 \end{bmatrix}
- \mathcal{A}.
\end{aligned}
$$
\end{example}

It is well-known that a fat
point scheme~$\V$ whose support is on a line is ACM. In this case
we can characterize the Hilbert function of the K\"{a}hler
differential module of~$\Y$ in terms of $\alpha_\Y$ and
$\hat{\alpha}_\Y$ as the preceding example showed.
When the thickening $\V$ of~$\Y$ is an ACM fat point scheme
in~$\popo$, this characterization can be generalized as follows.

\begin{proposition}\label{ACMsupport}
Let $\X$ be a set of distinct points in~$\popo$\!,
let $\Y \!=\!\sum_{(i,j)\in D_\X}\!m_{ij}P_{\!ij}$
be a fat point scheme with associated tuples
$\hat{\alpha}_\Y$ and $\alpha_\Y$, and let $\V$
be the thickening of~$\Y$. Suppose that
$\V$ is ACM and that
$\hat{\alpha}_1 \ge \cdots \ge \hat{\alpha}_r\ge \alpha_1$.
Let $c_{ij} = \min\{j+1, \alpha_{i+1}\}$ for $i=0,...,l-1$
and all $j\in \mathbb{N}$. Then the Hilbert function
of $\Omega^1_{R_{\Y}/K}$ is
$$
\begin{aligned}
\HF_{\Omega^1_{R_{\Y}/K}}(i,j) &=
2\sum_{k=0}^{\min\{i-1,l-1\}}c_{kj} + 2
\sum_{k=0}^{\min\{i,l-1\}}c_{k\, j-1}
+ \sum_{k=0}^{\min\{i,l-1\}} c_{kj} \\
&\quad \ - \sum_{k=0}^{\min\{i,r-1\}} \!\!\min\{j+1,
\hat{\alpha}_{k+1}\}-\sum_{k=r}^{\min\{i,l+r-1\}}c_{k-r\,j}
\end{aligned}
$$
for all $(i,j) \in \mathbb{N}^2$.
\end{proposition}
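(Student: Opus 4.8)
The plan is to reduce the entire statement to Corollary~\ref{corSec2.6}, which writes $\HF_{\Omega^1_{R_\Y/K}}(i,j)$ as $2\HF_\Y(i-1,j)+2\HF_\Y(i,j-1)+\HF_\Y(i,j)-\HF_\V(i,j)$, and then to substitute closed expressions for $\HF_\Y$ and $\HF_\V$ that are available because both schemes are ACM. All the real content therefore sits in writing $\HF_\Y$ and $\HF_\V$ in terms of $\alpha_\Y$ and $\alpha_\V$; the final combination is purely formal. First I would record the Hilbert function of~$\Y$. Since~$\V$ is ACM and (away from the reduced case) some multiplicity exceeds one, the remark preceding the statement guarantees that~$\Y$ is ACM as well, so the ACM Hilbert function formula of \cite[Thm.~3.2]{GV04}, already used in matrix form in the single-line example via \cite[Thm.~2.2]{GV04}, gives for all $(i,j)\in\N^2$ the identity
$$
\HF_\Y(i,j)=\sum_{k=0}^{\min\{i,l-1\}}\min\{j+1,\alpha_{k+1}\}=\sum_{k=0}^{\min\{i,l-1\}}c_{kj},
$$
with the convention $c_{k,-1}=\min\{0,\alpha_{k+1}\}=0$, so that $\HF_\Y(i,-1)=0$ as it should be.

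Next I would pin down~$\alpha_\V$. Repeating the comparison carried out in the proof of Proposition~\ref{PropSec3.3}, the tuple~$\alpha_\V$ is the non-increasing rearrangement of the multiset $\{\alpha_1,\dots,\alpha_l\}\cup\{\hat{\alpha}_1,\dots,\hat{\alpha}_r\}$. Here the ordering hypothesis $\hat{\alpha}_1\ge\cdots\ge\hat{\alpha}_r\ge\alpha_1$ does the decisive bookkeeping: every entry of~$\hat{\alpha}_\Y$ dominates every entry of~$\alpha_\Y$, so no interleaving takes place and
$$
\alpha_\V=(\hat{\alpha}_1,\dots,\hat{\alpha}_r,\alpha_1,\dots,\alpha_l),
$$
a genuinely non-increasing tuple of length $l+r$. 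Feeding this back into the same ACM formula and splitting the outer summation at the index $k=r$ (the first $r$ entries being the~$\hat{\alpha}_{k+1}$, the remaining ones being the~$\alpha_{k-r+1}=c_{k-r,\,j}$) yields
$$
\HF_\V(i,j)=\sum_{k=0}^{\min\{i,r-1\}}\min\{j+1,\hat{\alpha}_{k+1}\}+\sum_{k=r}^{\min\{i,l+r-1\}}c_{k-r,\,j},
$$
where the empty-sum conventions for $i<r$ match the boundary behaviour built into the claimed formula.

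Finally, inserting the three instances $\HF_\Y(i-1,j)$, $\HF_\Y(i,j-1)$, $\HF_\Y(i,j)$ of the first display and the second display for $\HF_\V(i,j)$ into Corollary~\ref{corSec2.6} reproduces the asserted expression term by term, the two negative sums being exactly those coming from~$\HF_\V$. I expect the main obstacle to be not this last substitution, which is mechanical, but the justification that the closed ACM formula is valid on the whole grid~$\N^2$ rather than only in large bidegree as in the proof of Proposition~\ref{PropSec3.3}, together with a careful treatment of the reduced case $\Y=\X$, which the ACM-inheritance remark does not cover; there one must verify directly that the hypotheses still force the $\alpha_\Y$-formula for~$\HF_\Y$. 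The clean splitting of~$\alpha_\V$ is the combinatorial heart of the argument, and it is precisely the place where the assumption $\hat{\alpha}_r\ge\alpha_1$ is indispensable.
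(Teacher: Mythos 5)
Your proposal is correct and follows essentially the same route as the paper: identify $\alpha_\V$ as the concatenation $(\hat{\alpha}_1,\dots,\hat{\alpha}_r,\alpha_1,\dots,\alpha_l)$ using the ordering hypothesis, apply the ACM Hilbert-function formula to both $\Y$ and $\V$, and combine via Corollary~\ref{corSec2.6}. The two worries you flag at the end are resolved exactly where the paper resolves them: the formula valid on all of $\N^2$ is \cite[Cor.~4.10]{GV04} (not merely the asymptotic statement you cite), and the ACM-ness of $\Y$, including the reduced case, follows from \cite[Thm.~4.2]{GV04} together with the inclusion $\mathcal{S}_\Y\subseteq\mathcal{S}_\V$.
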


\begin{proof}
Clearly, we have $\alpha_\V =
(\hat{\alpha}_1,\dots,\hat{\alpha}_r,\alpha_1,\dots,\alpha_l)$.
So, by \cite[Cor.~4.10]{GV04}, we get
$$
\HF_{\V}(i,j) = \sum_{k=0}^{\min\{i,r-1\}}\min\{j+1,
\hat{\alpha}_{k+1}\}+\sum_{k=r}^{\min\{i,l+r-1\}}c_{k-r\,j}.
$$
Note that $\V$ and $\Y$ have the same support and that~$\V$ is ACM.
Hence~$\Y$ is ACM by \cite[Thm.~4.2]{GV04} and by
the fact that $\mathcal{S}_\Y \subseteq \mathcal{S}_\V$,
where $\mathcal{S}_\Y$ is the set of $t$-tuples
$\mathcal{S}_\Y = \{(\max\{0,m_{i1}-k\},\dots,
\max\{0,m_{it}-k\}) \mid 1\le i\le r,
k\in \mathbb{N}, m_{ij}=0 \
\textrm{if}\ (i,j)\notin D_\X\}$
and $\mathcal{S}_\V$ is defined in the same way.
Again \cite[Cor.~4.10]{GV04} yields that
$$
\HF_{\Y}(i,j) = \sum_{k=0}^{\min\{i,l-1\}} c_{kj}.
$$
Hence the claim follows from Corollary~\ref{corSec2.6}.
\end{proof}

\begin{remark}
Let us observe that if the thickening $\V$ of~$\Y$
is ACM then it follows that also $\Y$ is ACM.
The converse is not true in general, since if $\Y$
is an ACM set of distinct points whose support is not
a complete intersection then its thickening $\V$ is
not ACM.

\begin{center}
\begin{tikzpicture}[scale=.7, transform shape]
\filldraw[black] (1,3) circle (3pt);
\node (a11) at (0.7,3.5) {$1$};
\filldraw[black] (2,3) circle (3pt);
\node (a11) at (1.7,3.5) {$1$};
\filldraw[black] (3,3) circle (3pt);
\node (a11) at (2.7,3.5) {$1$};
\filldraw[black] (1,2) circle (3pt);
\node (a11) at (0.7,2.5) {$1$};
\filldraw[black] (2,2) circle(3pt);
\node (a11) at (1.7,2.5) {$1$};
\filldraw[black] (1,1) circle (3pt);
\node (a11) at (0.7,1.5) {$1$};
\draw[-] (3,0.5) -- (3,3.5);
\draw[-] (2,0.5) -- (2, 3.5);
\draw[-] (1,0.5) -- (1, 3.5);
\draw[-] (0.5,3) -- (3.5,3);
\draw[-] (0.5,2) -- (3.5,2);
\draw[-] (0.5,1)-- (3.5,1);
\node (a1) at (2,0) {$\Y$ is ACM};

\filldraw[black] (6,3) circle (3pt);
\node (a11) at (5.7,3.5) {$2$};
\filldraw[black] (7,3) circle (3pt);
\node (a11) at (6.7,3.5) {$2$};
\filldraw[black] (8,3) circle (3pt);
\node (a11) at (7.7,3.5) {$2$};
\filldraw[black] (6,2) circle (3pt);
\node (a11) at (5.7,2.5) {$2$};
\filldraw[black] (7,2) circle(3pt);
\node (a11) at (6.7,2.5) {$2$};
\filldraw[black] (6,1) circle (3pt);
\node (a11) at (5.7,1.5) {$2$};
\draw[-] (6,0.5) -- (6, 3.5);
\draw[-] (7,0.5) -- (7, 3.5);
\draw[-] (8,0.5) -- (8, 3.5);
\draw[-] (5.5,3) -- (8.5,3);
\draw[-] (5.5,2) -- (8.5,2);
\draw[-] (5.5,1)-- (8.5,1);
\node (a1) at (7,0) {$2\Y=\V$ is not ACM};
\end{tikzpicture}
\end{center}
\end{remark}

\begin{proposition}\label{equimultipleoverCI}
Let $\X$ be a reduced $CI(d_1,d_2)$ with $d_1\le d_2$.
\begin{enumerate}
\item[(a)] For $m\geq 1$, we have the following
exact sequence of graded $R_{m\X}$-modules
$$
0\longrightarrow I^m_{\X}/I^{m+1}_{\X}
\longrightarrow
(S/I^m_{\X})^2(-1,0)\oplus (S/I^m_{\X})^2(0,-1)
\longrightarrow \Omega^1_{R_{m\X}/K}
\longrightarrow 0.
$$

\item[(b)] The Hilbert function of $\Omega^1_{R_{m\X}/K}$ is
$$
\begin{aligned}
\HF_{\Omega^1_{R_{m\X}/K}}(i,j) &=
2\sum_{k=0}^{\min\{i-1,md_1-1\}}c_{kj} +
2 \sum_{k=0}^{\min\{i,md_1-1\}}c_{k\, j-1}
+ \sum_{k=0}^{\min\{i,md_1-1\}} c_{kj} \\
&\quad \ - \sum_{k=0}^{\min\{i,d_1-1\}}\!\!\min\{j+1,(m+1)d_2\}
- \sum_{k=d_1}^{\min\{i,(m+1)d_1-1\}}c_{k-d_1\, j}
\end{aligned}
$$
for all $(i,j) \in \mathbb{N}^2$.
\end{enumerate}
\end{proposition}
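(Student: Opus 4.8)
The plan is to derive both parts by specializing the general results of Sections~3 and~4, the only genuine input being a structural property of powers of a complete intersection ideal. Since $\X$ is a reduced $CI(d_1,d_2)$, its vanishing ideal is $\Ix = \langle F,G\rangle$ with $F,G$ a regular sequence, $\deg(F)=(d_1,0)$ and $\deg(G)=(0,d_2)$. I would first record that, because $\Ix^{\,k}/\Ix^{\,k+1}$ is free over $S/\Ix$ for every $k$ (the associated graded ring is $\operatorname{gr}_{\Ix}(S)\cong (S/\Ix)[T_1,T_2]$), the quotient $S/\Ix^{\,n}$ is Cohen--Macaulay of dimension two for every $n\ge 1$. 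This single fact will be used twice. On the one hand, unmixedness of $\Ix^{\,n}$ together with the observation that each point ideal $\wp_{ij}$ is itself a complete intersection (whence $\wp_{ij}^{\,n}$ is $\wp_{ij}$-primary) forces $I_{n\X}=\bigcap_{(i,j)\in D_\X}\wp_{ij}^{\,n}=\Ix^{\,n}$, i.e.\ symbolic and ordinary powers coincide. On the other hand, the same Cohen--Macaulayness of $R_{n\X}=S/\Ix^{\,n}$ shows that $m\X$ and $(m+1)\X$ are both ACM.

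For part~(a) I would then simply read off Theorem~\ref{generpropSec2.5} with $\Y=m\X$ and thickening $\V=(m+1)\X$. Using the identifications $R_{m\X}=S/\Ix^{\,m}$ and $I_\Y/I_\V=\Ix^{\,m}/\Ix^{\,m+1}$ from the previous paragraph, the exact sequence of that theorem becomes verbatim the sequence asserted here.

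For part~(b) the plan is to apply Proposition~\ref{ACMsupport} to $\Y=m\X$, for which I first compute the relevant tuples from the grid structure of $\X$. The $d_1d_2$ points of $\X$ form a full $d_1\times d_2$ grid, so $r=d_1$, $t=d_2$, and each of the $d_1$ first-coordinate rulings carries $d_2$ points, all of multiplicity $m$. Hence $\alpha_{Q_i}=(md_2,(m-1)d_2,\dots,d_2)$ for every $i$; this gives $l=md_1$, makes $\alpha_{m\X}$ the non-increasing rearrangement of $d_1$ copies of that tuple, and yields $\hat{\alpha}_{m\X}=((m+1)d_2,\dots,(m+1)d_2)$ with $d_1$ entries. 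The hypotheses of Proposition~\ref{ACMsupport} are then immediate: the ordering $\hat{\alpha}_1\ge\cdots\ge\hat{\alpha}_r\ge\alpha_1$ reduces to $(m+1)d_2\ge md_2$, and $\V=(m+1)\X$ is ACM by the first paragraph. Substituting $l=md_1$, $r=d_1$, $l+r=(m+1)d_1$, and $\hat{\alpha}_{k+1}=(m+1)d_2$ into the formula of Proposition~\ref{ACMsupport} then produces, term by term, exactly the five sums in the statement.

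I expect the one real obstacle to be the structural input of the first paragraph, namely the Cohen--Macaulayness of $S/\Ix^{\,n}$ and the ensuing identity $I_{n\X}=\Ix^{\,n}$; this is the only place where the complete intersection hypothesis is essentially used, and care is needed there to exclude embedded primary components. Once this is secured, part~(a) is a direct substitution into Theorem~\ref{generpropSec2.5}, and part~(b) is bookkeeping with the combinatorial formula of Proposition~\ref{ACMsupport}.
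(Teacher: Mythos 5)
Your proposal is correct and follows essentially the same route as the paper: both parts reduce to the identity $I_{n\X}=I_{\X}^{\,n}$ (symbolic powers equal ordinary powers), after which (a) is a direct specialization of Theorem~\ref{generpropSec2.5} and (b) follows by computing $\alpha_{m\X}$ and $\hat{\alpha}_{m\X}$ and substituting $l=md_1$, $r=d_1$, $\hat{\alpha}_{k+1}=(m+1)d_2$ into Proposition~\ref{ACMsupport}. The only difference is that where the paper cites Zariski--Samuel for $I_{\X}^{(m)}=I_{\X}^{m}$ and \cite[Thm.~4.2]{GV04} for the ACM property of $m\X$ and $(m+1)\X$, you derive both from the freeness of $\operatorname{gr}_{I_\X}(S)\cong(S/I_\X)[T_1,T_2]$ and the resulting Cohen--Macaulayness and unmixedness of $S/I_\X^{\,n}$; this self-contained argument is sound and correctly handles the exclusion of embedded components.
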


\begin{proof}
Since $\X$ is a complete intersection in $\popo$,
using \cite{ZS60}, App.~6, Lemma 5, we have that
$I_{\X}^{(m)}=I_{\X}^{m}$ for all $m\ge 1$.
Hence claim~(a) follows from Theorem~\ref{generpropSec2.5}.

To prove~(b), suppose $\X$ is a $CI(d_1,d_2)$ with $d_1\le d_2$
and $\Y=m\X$ with $m\geq 1$. Then $\Y$ is ACM
by~\cite[Thm.~4.2]{GV04}.
By our hypotheses, the associated tuples of~$\Y$ are
$$
\hat{\alpha}_\Y=(\underbrace{(m+1)d_2,\dots,(m+1)d_2}_{d_1})
\quad \mbox{and}\quad
\alpha_\Y =
(\underbrace{md_2,\dots,md_2}_{d_1},\dots,
\underbrace{d_2,\dots,d_2}_{d_1}).
$$
Here we have $l =md_1$, $\V=(m+1)\X$, and $(m+1)d_2\geq d_2$.
Let $c_{ij} = \min\{j+1, \alpha_{i+1}\}$ for $i=0,...,md_1-1$
and all $j\in \mathbb{N}$. An application of
Proposition~\ref{ACMsupport} yields the conclusion.
\end{proof}

Recall that the {\it first difference function} of
a numerial function $H:\Z^2\rightarrow \Z$, denoted
by $\Delta H$, is the function
$\Delta H:\mathbb{Z}^2 \rightarrow \mathbb{Z}$
defined by
\[
\Delta H(i,j) := H(i,j)-H(i-1,j)-H(i,j-1)+H(i-1,j-1)
\]
where $H(i,j) = 0$ if $(i,j) \not\succeq (0,0)$.

With the above results and~\cite[Thm.~4.2]{GV04},
we can describe the shape
of the first difference function of the Hilbert function
of $\Omega^1_{R_{m\X}/K}$ in the ACM case in terms
of $d_1,d_2$ and $m$.

\begin{corollary}\label{Delta}
Let $\X$ be a $CI(1,d_2)$ consisting of $d_2$ distinct points
on a line, and let $m\ge 1$.
Then the first difference function
of $\HF_{\Omega^1_{R_{m\X}}}$ can be written as
\begin{small}
$$
\left[
\begin{array}{rrrrrrrrrrrrrrrrrrr}
 \underbrace{
   \begin{array}{rrrrrrrrrr}    
    0 & 2 & 2& 2 & 2 &\dots   &\dots & 2 \\
   \end{array}
   }_{md_2}
   \underbrace{
      \begin{array}{rrrrrr}    
       1 & -1 & \dots &  -1 \\
      \end{array}
   }_{d_2}
       \begin{array}{rrrrrrrrr}    
        0 &\dots & \dots& 0 \\
       \end{array}
   \\
 \underbrace{
   \begin{array}{rrrrrrrrrr}    
    2 &4&4& \dots   &\dots & 4 \\
   \end{array}
   }_{(m-1)d_2}
   \underbrace{
     \begin{array}{rrrrrr}    
      3 & 1 & \dots &  1 \\
     \end{array}
   }_{d_2}
      \begin{array}{rrrrrrrrrrrrr}    
       0 &\dots&\dots &\dots&\dots& 0 \\
      \end{array}
    \\
   \begin{array}{rrrrrrrrrrrrrrrr}
     \ddots &      \ddots &      \ddots &
     \ddots &       \ddots &     \ddots &
     & \ddots &    & \ddots &    \ddots &
     \ddots &     \\
   \end{array}
    \\
  \underbrace{
    \begin{array}{rrrrrrrrrr}    
     2 & 4&\dots&\dots&\dots & 4 \\
    \end{array}
    }_{d_2}
    \underbrace{
     \begin{array}{rrrrrr}    
       3 & 1 & \dots &  1 \\
     \end{array}
     }_{d_2}
      \begin{array}{rrrrrrrrrrrrr}    
       0 &\dots &\dots&\dots& && 0 \\
      \end{array}
    \\
  \underbrace{
    \begin{array}{rrrrrrrrrr}    
     1 & 1&\dots & 1 \\
    \end{array}
    }_{d_2}
    \begin{array}{rrrrrrrrrrrrrrrrrrrrrr}  
     0 & 0 & 0& 0&\dots& &\dots& & & & & & & & &\dots & 0\\
    \end{array}
    \\
     \begin{array}{rrrrrrrrrrrrrrrrrrrrrrrrrr} 
      0 & 0 &\dots&\dots&&&& &&&& &&&& &&&& &&&&\dots& 0\\
     \end{array}
\end{array}\right].
$$
\end{small}
\end{corollary}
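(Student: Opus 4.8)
The plan is to pass to first differences, reducing the computation to the Hilbert functions of equimultiple schemes on a single line, where the difference tables become simple staircases. Since $\X$ is a $CI(1,d_2)$, its vanishing ideal is generated by one $(1,0)$-form and one $(0,d_2)$-form, so all $d_2$ points lie on a single horizontal ruling; in the notation of Section~3 this means $r=1$ and $t=d_2$. Writing $\Y=m\X$ and $\V=(m+1)\X$, the associated tuples computed in the proof of Proposition~\ref{equimultipleoverCI} specialize to the $m$-tuple $\alpha_\Y=(md_2,(m-1)d_2,\dots,d_2)$ and to $\alpha_\V=((m+1)d_2,md_2,\dots,d_2)$. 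Writing $T_1H(i,j)=H(i-1,j)$ and $T_2H(i,j)=H(i,j-1)$ for the two commuting shift operators, we have $\Delta=(\mathrm{id}-T_1)(\mathrm{id}-T_2)$, so applying $\Delta$ to the identity of Corollary~\ref{corSec2.6} and using linearity yields
$$
\Delta\HF_{\Omega^1_{R_{m\X}}}(i,j)=2\,\Delta\HF_\Y(i-1,j)+2\,\Delta\HF_\Y(i,j-1)+\Delta\HF_\Y(i,j)-\Delta\HF_\V(i,j).
$$
Thus it suffices to determine the two tables $\Delta\HF_\Y$ and $\Delta\HF_\V$.

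This is the key step. By \cite[Thm.~2.2]{GV04}, as recorded in the example preceding Proposition~\ref{ACMsupport}, one has $\HF_\Y=\sum_{k=1}^{m}\bigl[\begin{smallmatrix}\mathcal{Z}_{k-1}\\ \mathcal{A}_k\end{smallmatrix}\bigr]$, where $\mathcal{Z}_{k-1}$ consists of $k-1$ zero rows and every row of $\mathcal{A}_k$ equals $(1,2,\dots,a_k-1,a_k,a_k,\dots)$ with $a_k=(m-k+1)d_2$. Because all rows of $\mathcal{A}_k$ coincide, the factor $\mathrm{id}-T_1$ annihilates every row except the topmost, while $\mathrm{id}-T_2$ turns the surviving row into $(\underbrace{1,\dots,1}_{a_k},0,\dots)$; hence $\Delta\bigl[\begin{smallmatrix}\mathcal{Z}_{k-1}\\ \mathcal{A}_k\end{smallmatrix}\bigr]$ is supported in row $k-1$, where it equals $a_k$ ones followed by zeros. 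Summing over $k$, the table $\Delta\HF_\Y$ is the staircase whose row $k$ consists of $(m-k)d_2$ ones for $0\le k\le m-1$ and vanishes for $k\ge m$; the identical computation for $\V=(m+1)\X$ gives $\Delta\HF_\V$ with row $k$ equal to $(m+1-k)d_2$ ones for $0\le k\le m$ and zero afterward.

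It then remains to substitute these two staircases into the displayed formula and read off each row, and the only real obstacle is the bookkeeping at the block boundaries, where the four terms cease to be simultaneously present as $j$ grows. For a generic row $i$ with $1\le i\le m-1$, the case analysis in $j$ gives $2+0+1-1=2$ at $j=0$ (the shifted term $\Delta\HF_\Y(i,j-1)$ being still absent); then $2+2+1-1=4$ on the interior columns where all four staircases are active; then $2+2+0-1=3$ at the column $j=(m-i)d_2$ where $\Delta\HF_\Y(i,j)$ first drops out; then $2+0+0-1=1$ on the following $d_2-1$ columns where only $\Delta\HF_\Y(i-1,j)$ and $\Delta\HF_\V(i,j)$ survive; and $0$ beyond. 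This produces exactly $\underbrace{2,4,\dots,4}_{(m-i)d_2}\,\underbrace{3,1,\dots,1}_{d_2}\,0,\dots$. The boundary rows go the same way: row~$0$ gives $0+0+1-1=0$, then the $2$'s, then $1$ at $j=md_2$, then $0+0+0-1=-1$ on the $d_2-1$ columns where only $\Delta\HF_\V$ remains; row~$m$ gives $2\cdot1-1=1$ on its first $d_2$ columns from $2\,\Delta\HF_\Y(m-1,j)$ against $\Delta\HF_\V(m,j)$ (the $\Y$-terms in that row having vanished); and every row $i>m$ is identically zero. Assembling these rows reproduces the asserted matrix.
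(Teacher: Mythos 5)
Your proposal is correct and follows essentially the route the paper intends for this (unproved) corollary: it combines the staircase decomposition $\HF_{m\X}=\sum_k\bigl[\begin{smallmatrix}\mathcal{Z}_{k-1}\\ \mathcal{A}_k\end{smallmatrix}\bigr]$ from the example on points on a line with Corollary~\ref{corSec2.6}, only organizing the computation through first differences, which commute with the shifts. The row-by-row case analysis at the block boundaries checks out, including the boundary rows $i=0$ and $i=m$.
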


\begin{corollary}\label{Delta2}
Let $\X$ be a $CI(d_1,d_2)$ with $1<d_1$, and let $m\ge 1$.
Then the first difference function of
$\HF_{\Omega^1_{R_{m\X}}}$ can be written as
\begin{small}
$$
\left[
\begin{array}{rrrrrrrrrrrrrrrrrrr}
 \underbrace{
       d_1 \left\{ \begin{array}{rrrrrrrrrr}    
      0 & 2 & 2& 2 & 2 &\dots   &\dots & 2 \\
      2 & 4 & 4 & 4 & 4 & \dots &\dots& 4 \\
          & \ddots &   \\
       2 & 4 & 4 & 4 & 4 & \dots  &\dots& 4  \\
      \end{array}\right.
  }_{md_2}
   \underbrace{
         \begin{array}{rrrrrr}    
       1 & -1 & \dots &  -1 \\
     1 &-1 & \dots & -1 \\
          & \ddots &   \\
       1 &-1 & \dots & -1  \\
      \end{array}
  }_{d_2}
       \begin{array}{rrrrrrrrr}    
       0 &\dots & 0 \\
     0& \dots &0 \\
          & \ddots &   \\
       0&\dots & 0  \\
      \end{array}
    \\
     \underbrace{
     d_1\, \left\{\begin{array}{rrrrrrrrrr}    
      2 & 4& \dots   &\dots & 4 \\
      2 & 4 &  \dots &\dots& 4 \\
          & \ddots &   \\
       2 & 4   &\dots&\dots & 4  \\
      \end{array}\right.
      }_{(m-1)d_2}
    \underbrace{
         \begin{array}{rrrrrr}    
       3 & 1 & \dots &  1 \\
     1 &-1 & \dots & -1 \\
          & \ddots &   \\
       1 &-1 & \dots & -1  \\
      \end{array}
  }_{d_2}
   \begin{array}{rrrrrrrrrrrrr}    
       0 &\dots &\dots&\dots& 0 \\
     0& \dots &\dots&\dots&0 \\
          & \ddots &   \\
       0&\dots &\dots&\dots& 0  \\
      \end{array}
    \\
     \begin{array}{rrrrrrrrrrrrrrrr}
       \ddots &\ddots &\ddots &\ddots &\ddots &\ddots & &
       \ddots &    & \ddots & \ddots &  \ddots &     \\
       \end{array}\\

       \underbrace{
    d_1\ \left\{
    \begin{array}{rrrrrrrrrr} 
      2 & 4&\dots & 4 \\
      2 & 4 &\dots& 4 \\
          & \ddots &   \\
       2 & 4 &\dots & 4  \\
    \end{array}\right.
    }_{d_2}
    \underbrace{
      \begin{array}{rrrrrr}    
       3 & 1 & \dots &  1 \\
       1 &-1 & \dots & -1 \\
          & \ddots &   \\
       1 &-1 & \dots & -1  \\
      \end{array}
    }_{d_2}
      \begin{array}{rrrrrrrrrrrrr}    
       0 &\dots &\dots&\dots& && 0 \\
       0 &\dots &\dots&\dots& && 0 \\
          & \ddots &   \\
       0 &\dots &\dots&\dots& && 0 \\
      \end{array}
    \\
    \underbrace{  d_1\,\left\{
      \begin{array}{rrrrrrrrrr}    
      1 & 1&\dots & 1 \\
      -1 & -1 &\dots& -1 \\
          & \ddots &   \\
      -1 & -1 &\dots & -1  \\
      \end{array}\right.
      }_{d_2}
      \begin{array}{rrrrrrrrrrrrrrrrrrrrrr} 
       0 & 0 & \dots & & & & & & & & &  & &\dots  & 0 \\
       0 & 0 & \dots & & & & & & & & & & &\dots  & 0 \\
          & \ddots &   \\
       0 & 0 & \dots & & & & & & & & & & &\dots  & 0 \\
      \end{array}
    \\
      \begin{array}{rrrrrrrrrrrrrrrrrrrrrrrrrr} 
       0 & 0 & \dots & & & &&&&& &&&&&&& & & & & & &\dots& 0 \\
       0 & 0 & \dots & & & &&&&& &&&&&&& & & & & & &\dots& 0 \\
          & \ddots &   \\
       0 & 0 & \dots & & & &&&&& &&&&&&& & & & & & &\dots& 0 \\
      \end{array}
\end{array}\right].
$$
\end{small}
\end{corollary}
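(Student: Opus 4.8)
The plan is to deduce everything from the Hilbert functions of $m\X$ and its thickening $(m+1)\X$. By Corollary~\ref{corSec2.6} we have
$$\HF_{\Omega^1_{R_{m\X}/K}}(i,j)=2\HF_{m\X}(i-1,j)+2\HF_{m\X}(i,j-1)+\HF_{m\X}(i,j)-\HF_{(m+1)\X}(i,j),$$
and since the first difference operator $\Delta$ is linear and commutes with the shifts $(i,j)\mapsto(i-1,j)$ and $(i,j)\mapsto(i,j-1)$, it is enough to control $\Delta\HF_{m\X}$ and $\Delta\HF_{(m+1)\X}$. As $\X$ is a complete intersection, $m\X$ is ACM and its Hilbert function is the explicit one recorded in Proposition~\ref{equimultipleoverCI}(b) (equivalently \cite[Thm.~4.2]{GV04}), so in principle one may substitute that closed form into the definition of $\Delta$ and evaluate region by region. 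I will instead organize the computation through bivariate Hilbert series, which makes the block pattern transparent.

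Write $I_\X=\langle F,G\rangle$ with $\deg F=(d_1,0)$ and $\deg G=(0,d_2)$. Then $I_\X^m=\langle F,G\rangle^m$ is minimally generated by $F^kG^{m-k}$ for $0\le k\le m$, and the only minimal syzygies are the Koszul relations $F\cdot F^kG^{m-k}=G\cdot F^{k+1}G^{m-k-1}$. This yields the length-two resolution of $S/I_\X^m$, and hence the Hilbert series $\mathrm{HS}_{m\X}(s,t)=N_m/[(1-s)^2(1-t)^2]$ with
$$N_m=1-\sum_{k=0}^{m}s^{kd_1}t^{(m-k)d_2}+\sum_{k=0}^{m-1}s^{(k+1)d_1}t^{(m-k)d_2}.$$
Feeding this into the series form of Corollary~\ref{corSec2.6}, namely $\mathrm{HS}_{\Omega^1}=(1+2s+2t)\,\mathrm{HS}_{m\X}-\mathrm{HS}_{(m+1)\X}$, and multiplying by $(1-s)(1-t)$, I obtain the generating function of $\Delta\HF_{\Omega^1_{R_{m\X}/K}}$ as $[(1+2s+2t)N_m-N_{m+1}]/[(1-s)(1-t)]$.

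The heart of the argument is to simplify this numerator. Setting $u=s^{d_1}$ and $v=t^{d_2}$, a direct manipulation shows that it factors as
$$-(1+s+\cdots+s^{d_1-1})(1+t+\cdots+t^{d_2-1})\,P(s,t),\qquad P=\sigma_m-2(s+t)\sum_{k=1}^{m}\Big(\sum_{p=0}^{k-1}u^{p}\Big)v^{m-k},$$
where $\sigma_m=\sum_{k=0}^{m}u^{k}v^{m-k}$. Divisibility by $(1-s)(1-t)$ is forced because $\Delta\HF_{\Omega^1}$ has finite support; concretely it comes from $u-1=-(1-s)(1+\cdots+s^{d_1-1})$ and $1-v=(1-t)(1+\cdots+t^{d_2-1})$. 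The first factor is exactly the Hilbert series of the constant $d_1\times d_2$ block, so $\Delta\HF_{\Omega^1_{R_{m\X}/K}}$ is the convolution of the finite pattern $-P$ with an all-ones $d_1\times d_2$ box. Now $-P$ carries the entry $-1$ at each antidiagonal position $(kd_1,(m-k)d_2)$ and the entry $+2$ at the unit translates of the staircase points $(pd_1,(m-k)d_2)$ with $0\le p<k\le m$; spreading each of these over a $d_1\times d_2$ block and superposing reproduces the displayed matrix. The overlapping positive blocks along the staircase account for the $2$'s and $4$'s, the antidiagonal $-1$'s give the columns of $-1$'s, their collisions with a positive block give the corner values $3$ and the $1$'s, and beyond the $(m+1)d_1\times(m+1)d_2$ range all contributions cancel.

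This picture also shows that Corollary~\ref{Delta} is the degenerate case $d_1=1$, in which the $s$-block collapses to a single row. The step I expect to be the main obstacle is precisely the overlap bookkeeping along the antidiagonal: one must verify that the superposition of the spread $+2$-blocks with the $-1$-atoms yields exactly the stated entries, in particular the value $3$ at each block corner and the sign reversals in the last row-block, and that the support is cut off exactly at $(m+1)d_1$ rows and $(m+1)d_2$ columns. The purely combinatorial alternative, plugging the closed form of Proposition~\ref{equimultipleoverCI}(b) directly into $\Delta$ and splitting $\mathbb{N}^2$ into the block regions, avoids the series algebra but runs into the same boundary case analysis.
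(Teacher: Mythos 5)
Your route is correct and genuinely different from the paper's. The paper obtains this corollary by taking the first difference of the closed formula in Proposition~\ref{equimultipleoverCI}(b), which in turn rests on the $\alpha$-tuple machinery of Proposition~\ref{ACMsupport} and \cite[Thm.~4.2]{GV04}; you instead work with Hilbert series, using the length-one resolution $0\to S^m\to S^{m+1}\to I_\X^m\to 0$ coming from the Koszul syzygies of the powers of a codimension-two complete intersection. Your setup is sound: Corollary~\ref{corSec2.6} applies because $I_\X^{(m)}=I_\X^m$ (Proposition~\ref{equimultipleoverCI}(a)), so $(m+1)\X$ really is the thickening of $m\X$, and your numerator $N_m$ and the series identity for $\Delta$ are right. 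The factorization you assert does hold, and in fact it becomes transparent if you first record the identity
$$N_m=(1-u)(1-v)\sum_{j=0}^{m-1}\sigma_j,\qquad \sigma_j=\sum_{k=0}^{j}u^kv^{j-k},$$
(which is just the Hilbert series of $K[U,V]/(U,V)^m$ reweighted by $u=s^{d_1}$, $v=t^{d_2}$); then
$$(1+2s+2t)N_m-N_{m+1}=(1-u)(1-v)\Bigl[2(s+t)\sum_{j=0}^{m-1}\sigma_j-\sigma_m\Bigr]=-(1-u)(1-v)P,$$
which is exactly your $P$ since $\sum_{k=1}^{m}(\sum_{p=0}^{k-1}u^p)v^{m-k}=\sum_{p+q\le m-1}u^pv^q=\sum_{j=0}^{m-1}\sigma_j$. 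The overlap bookkeeping you flag as the main obstacle does close without incident: convolving the atoms ($-1$ at $(kd_1,(m-k)d_2)$, $+2$ at $(pd_1{+}1,qd_2)$ and $(pd_1,qd_2{+}1)$ for $p+q\le m-1$) with the all-ones $d_1\times d_2$ box reproduces row by row the $0/2$ first row, the $2/4$ interior rows, the corner value $3=2+2-1$ at $((k{+}1)d_1,(m-k-1)d_2)$, the trailing $1$'s and $-1$'s, and the cutoff at $(m+1)d_1$ rows and $(m+1)d_2$ columns; I checked the boundary row blocks $i=0$, $i=d_1$, and $i=md_1,\dots,(m+1)d_1-1$ explicitly and they match the display. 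What your approach buys is a uniform derivation that specializes to Corollary~\ref{Delta} when $d_1=1$ and makes the block structure conceptual (a convolution) rather than a case split over regions; what the paper's approach buys is that it stays entirely within the ACM/$\alpha$-tuple framework already developed and avoids invoking the resolution of $I_\X^m$. To make your argument a complete proof, you should write out the row-by-row convolution check rather than leaving it as an expectation, but no new idea is needed.
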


\begin{example}\label{sepDiff}
Let $i\ge 0$, let $Q_i = R_i = [1:i] \in \P^1$, and
let $P_{ij}$ denote the point $Q_i\times R_j$ in~$\popo$.
We let $\X$ be the set of points
$\X =\{P_{11},P_{12},P_{13},P_{21},P_{22},P_{23}\}$
in~$\popo$. Clearly, the set~$\X$ is a $CI(2,3)$.
For $\Y = 3\X$, the first difference function
of~$\HF_{\Omega^1_{R_{\Y}/K}}$ is
\begin{small}
$$
\left[\begin{array}{cccccccccccccc}
0 & 2 & 2 & 2 & 2 & 2 & 2 & 2 & 2 & 1 & -1 & -1 &0 & \dots \\
2 & 4 & 4 & 4 & 4 & 4 & 4 & 4 & 4 & 1 & -1 & -1 & 0 &\dots \\
2& 4 & 4 & 4 & 4 & 4 & 3 & 1 & 1 & 0 & 0 & 0 & 0 &\dots \\
2 & 4 & 4 & 4 & 4 & 4 &1 &-1 & -1 & 0 & 0 & 0 & 0 &\dots \\
2 & 4 & 4 & 3 & 1& 1 & 0 & 0 & 0 & 0 & 0 & 0& 0 &\dots \\
2 & 4 & 4 & 1 & -1 & -1 & 0 & 0 & 0 & 0 & 0 & 0& 0 &\dots \\
1 & 1 &1 & 0 & 0 & 0 & 0 & 0 & 0 & 0 & 0 & 0
& 0 &\dots \\
-1 & -1 & -1 & 0 & 0 & 0 & 0 & 0 & 0 & 0 & 0 & 0& 0 &\dots \\
0 & 0 & 0 & 0 & 0 & 0 & 0 & 0 & 0 & 0 & 0 & 0& 0 &\dots \\
\vdots & \vdots & \vdots &\vdots & \vdots & \vdots &
\vdots &\vdots & \vdots & \vdots & \vdots & \vdots &
\vdots & \ddots
\end{array}\right].
$$
\end{small}
\end{example}

For equimultiple fat point schemes supported at a complete
intersection, the Hilbert function of the K\"ahler differential
module exhibits the following type of uniform behaviour.

\begin{proposition} \label{indip1}
Let $\X$ be a $CI(d_1,d_2)$ consisting
of $s=d_1d_2$ distinct points in~$\popo$, and let $m\geq 2$.
For $1\le i\le d_1$ and $1\le j \le d_2$, let
$\Y_{ij}$ be the fat point scheme $\Y_{ij} =
\sum_{(k,l)\in D_\X\setminus\{(i,j)\}} mP_{kl}+(m-1)P_{ij}$
supported at~$\X$.
Then the Hilbert function of $\Omega^1_{R_{\Y_{ij}}/K}$
does not depend on the choice of $(i,j)$.
\end{proposition}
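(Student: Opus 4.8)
The plan is to read off the result from Proposition~\ref{ACMsupport} once one checks that its hypotheses hold for every $\Y_{ij}$ and that the combinatorial data feeding its formula cannot detect the choice of $(i,j)$. We may assume $\X$ has more than one point, i.e. $d_2\ge 2$ (if $d_1d_2=1$ there is only one admissible $(i,j)$ and nothing to prove). The decisive first observation is that, since $m\ge 2$, the point $P_{ij}$ has multiplicity $m-1\ge 1$ in $\Y_{ij}$ and so every point of the grid still lies in $\supp(\Y_{ij})=\X$. Hence the thickening of $\Y_{ij}$ in the sense of Section~3 is exactly $\V_{ij}=\sum_{(k,l)\ne(i,j)}(m+1)P_{kl}+mP_{ij}$, whose multiplicity matrix on the grid $\X=\{Q_1,\dots,Q_{d_1}\}\times\{R_1,\dots,R_{d_2}\}$ is constant equal to $m+1$ except for the single entry $m$ in position $(i,j)$. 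This is precisely the step that breaks for $m=1$: there $P_{ij}$ leaves the support and the thickening no longer has this uniform-minus-one shape, which is why that case must be treated separately in Proposition~\ref{indip2}. Via Corollary~\ref{corSec2.6} the whole problem reduces to showing that $\HF_{\Y_{ij}}$ and $\HF_{\V_{ij}}$ are independent of $(i,j)$.

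Next I would compute the tuples attached to $\Y_{ij}$. Along the ruling $Q_i$ the multiplicities form the multiset $\{m^{\,d_2-1},\,m-1\}$, while along every other ruling $Q_e$ they form $\{m^{\,d_2}\}$, so that
\[
\alpha_{Q_e}=(md_2,(m-1)d_2,\dots,d_2)\ \ (e\ne i),\qquad
\alpha_{Q_i}=(md_2-1,(m-1)d_2-1,\dots,d_2-1),
\]
and $\hat{\alpha}_{\Y_{ij}}$ consists of the $d_1-1$ entries $(m+1)d_2$ together with one entry $(m+1)d_2-1$. Each of these depends only on $d_1,d_2,m$ and not on which point was singled out, because moving the lowered entry around the constant matrix changes neither the row- nor the column-multiplicity multisets. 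Thus, after rearranging in non-increasing order, $\alpha_{\Y_{ij}}$ and $\hat{\alpha}_{\Y_{ij}}$ are independent of $(i,j)$, and so are $l=md_1$ and $r=d_1$. The ordering hypothesis of Proposition~\ref{ACMsupport} also holds, since the smallest entry $(m+1)d_2-1$ of $\hat{\alpha}_{\Y_{ij}}$ satisfies $(m+1)d_2-1\ge md_2=\alpha_1$ as $d_2\ge 1$.

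It then remains to verify that $\V_{ij}$ is ACM, which I would do through the combinatorial criterion \cite[Thm.~4.2]{GV04} already used in the proof of Proposition~\ref{ACMsupport}. For $\V_{ij}$ the tuples in $\mathcal{S}_{\V_{ij}}$ are, for each $k\ge 0$, the uniform tuple with all entries $\max\{m+1-k,0\}$ coming from the rulings $Q_e$ with $e\ne i$, and the tuple coming from $Q_i$ that equals $\max\{m+1-k,0\}$ in every column except column $j$, where it equals $\max\{m-k,0\}$. A direct comparison shows that the uniform tuple for parameter $k$ dominates the $Q_i$-tuple for parameter $k$, which in turn dominates the uniform tuple for parameter $k+1$; hence $\mathcal{S}_{\V_{ij}}$ is totally ordered and $\V_{ij}$ is ACM. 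With all hypotheses in place, Proposition~\ref{ACMsupport} expresses $\HF_{\Omega^1_{R_{\Y_{ij}}/K}}$ solely in terms of $\alpha_{\Y_{ij}}$, $\hat{\alpha}_{\Y_{ij}}$, $l$ and $r$, each of which we have shown to be independent of $(i,j)$; equivalently, both $\Y_{ij}$ and $\V_{ij}$ are ACM with $(i,j)$-independent associated tuples, so \cite[Cor.~4.10]{GV04} gives $(i,j)$-independent Hilbert functions and Corollary~\ref{corSec2.6} finishes. I expect the ACM verification for $\V_{ij}$ (the chain condition above) to be the only genuine obstacle, as the independence of the $\alpha$-data is essentially forced by the symmetry of a constant matrix with one lowered entry; the conceptual heart of the argument, and the reason $m\ge 2$ is needed, is exactly that keeping $P_{ij}$ in the support guarantees the thickening retains the correct shape.
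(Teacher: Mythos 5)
Your proposal is correct, and the hypotheses you check do hold: the support of $\Y_{ij}$ is all of $\X$ because $m\ge 2$, the thickening is $\V_{ij}$ with the single lowered entry, your computation of $\alpha_{Q_e}$, $\alpha_{Q_i}$ and $\hat{\alpha}_{\Y_{ij}}$ is right, the ordering condition $(m+1)d_2-1\ge md_2$ of Proposition~\ref{ACMsupport} is satisfied, and the chain argument for $\mathcal{S}_{\V_{ij}}$ does establish that $\V_{ij}$ is ACM via \cite[Thm.~4.2]{GV04}. However, your route differs from the paper's at the key step. Both proofs reduce, via Theorem~\ref{generpropSec2.5} and Corollary~\ref{corSec2.6}, to showing that $\HF_{\Y_{ij}}$ and $\HF_{\V_{ij}}$ are independent of $(i,j)$; but where you compute the associated tuples explicitly and invoke the ACM Hilbert function formula through Proposition~\ref{ACMsupport}, the paper instead uses separator machinery: Theorem~\ref{degP1xP1} shows that the degree tuple $\deg_{m\X}(P_{ij})$ (resp.\ $\deg_{(m+1)\X}(P_{ij})$) is the same for every point of the complete intersection grid, and then \cite[Appl.~3.6]{GV12} expresses $\HF_{\Y_{ij}}$ as $\HF_{m\X}$ minus a count determined by that degree tuple, so independence of $(i,j)$ is immediate without any tuple computation or any ACM verification for $\V_{ij}$. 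The paper's argument is shorter and is the one that survives (in modified form) in Proposition~\ref{indip2} for $m=1$; your argument costs the explicit ACM check for $\V_{ij}$ but buys a closed formula for $\HF_{\Omega^1_{R_{\Y_{ij}}/K}}$ and makes visible exactly where $m\ge 2$ enters, namely in keeping $P_{ij}$ in the support so that the thickening has the uniform-minus-one shape.
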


\begin{proof}
Since the equimultiple fat point scheme $m\X$ is ACM,
the fat point schemes $\Y_{ij}$ are also ACM.
In this case we have
$D_\X = \{(i,j)\in \N^2\mid 1\le i\le d_1, 1\le j\le d_2\}$.
So, Theorem~\ref{degP1xP1} implies that
$$
\deg_{m\X}(P_{ij}) =  ((d_1-1,md_2-1),
(2d_1-1,(m-1)d_2-1),\dots, (md_1-1, d_2-1))
$$
for all $P_{ij}\in\X$.
According to \cite[Appl.~3.6]{GV12},
we have
$$
\HF_{\Y_{ij}}(k,l) = \HF_{m\X}(k,l)-
\#\{(k',l') \in \deg_{m\X}(P_{ij})
\mid (k',l') \preceq (k,l)\}
$$
for all $(k,l)\in \N^2$, and thus the fat point
schemes $\Y_{ij}$ all have the same Hilbert function.
Similarly, also the fat point schemes
$\V_{ij}=\sum_{(k,l)\in D_\X\setminus\{(i,j)\}}
(m+1)P_{kl}+mP_{ij}$
all have the same Hilbert function.
By Theorem~\ref{generpropSec2.5}, the sequence of
bigraded $R_{\Y_{ij}}$-modules
$$
0\longrightarrow
I_{\Y_{ij}}/I_{\V_{ij}} \longrightarrow
R_{\Y_{ij}}^2(-1,0)\oplus R_{\Y_{ij}}^2(0,-1)
\longrightarrow \Omega^1_{R_{\Y_{ij}}/K}
\longrightarrow 0
$$
is exact. Hence we have
$$
\HF_{\Omega^1_{R_{\Y_{ij}}/K}}(k,l)
=2\HF_{\Y_{ij}}(k-1,l)+2\HF_{\Y_{ij}}(k,l-1)
+\HF_{\Y_{ij}}(k,l)-\HF_{\V_{ij}}(k,l)
$$
for all $(k,l)$. The conclusion follows.
\end{proof}

If we remove the hypothesis $m\ge 2$ in the preceding
proposition, we are still able to show the following
result for complete intersections~$\X$ of type $(h,h)$.

\begin{proposition} \label{indip2}
Let $h\ge 1$, let $\X$ be a $CI(h,h)$ in~$\popo$,
and for $(i,j)\in D_{\X}$ let
$\X_{ij}=\X\setminus \{P_{ij}\}$. Then the
Hilbert function of $\Omega^1_{R_{\X_{ij}}/K}$ does not
depend on the choice of~$(i,j)$.
\end{proposition}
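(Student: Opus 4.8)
The plan is to follow the strategy of Proposition~\ref{indip1}, reducing the claim via Corollary~\ref{corSec2.6} to showing that both $\HF_{\X_{ij}}$ and the Hilbert function of the thickening $2\X_{ij}$ are independent of $(i,j)$. Write $I_{\X}=\langle F,G\rangle$ with $\deg F=(h,0)$ and $\deg G=(0,h)$, so that the Koszul resolution gives $\Sigma=\{(h,h)\}$. First I would dispose of the reduced scheme: by Theorem~\ref{degP1xP1} one has $\deg_{\X}(P_{ij})=((h-1,h-1))$ for every $(i,j)$, and since $(h-1,h-1)+(1,1)=(h,h)\in\Sigma$, Theorem~\ref{pnmsepfrombetti} shows that each $\X_{ij}$ is again ACM. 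Then \cite[Appl.~3.6]{GV12} gives $\HF_{\X_{ij}}(k,l)=\HF_{\X}(k,l)-\#\{(k',l')\in\deg_{\X}(P_{ij})\mid (k',l')\preceq(k,l)\}$, which is manifestly independent of $(i,j)$; this is just the Cayley--Bacharach property of the complete intersection.

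For the thickening I would factor the passage from $2\X$ to $2\X_{ij}$ through the intermediate fat point scheme $\W_{ij}=\sum_{(k,l)\neq(i,j)}2P_{kl}+P_{ij}$, obtained by lowering the multiplicity at $P_{ij}$ from $2$ to $1$. The equimultiple scheme $2\X$ is ACM with $\deg_{2\X}(P_{ij})=((h-1,2h-1),(2h-1,h-1))$ by Theorem~\ref{degP1xP1}, and a direct check of the staircase criterion of \cite[Thm.~4.2]{GV04} shows that $\W_{ij}$ is ACM as well (its sorted multiplicity vectors form the chain $(2^h)\succeq(2^{h-1},1)\succeq(1^h)\succeq(1^{h-1})\succeq 0$). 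Hence \cite[Appl.~3.6]{GV12} applies to the step $2\X\to\W_{ij}$ and yields $\HF_{\W_{ij}}(k,l)=\HF_{2\X}(k,l)-\#\{(k',l')\in\deg_{2\X}(P_{ij})\mid (k',l')\preceq(k,l)\}$, again independent of $(i,j)$.

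The remaining, and hardest, step is the passage $\W_{ij}\to 2\X_{ij}$, that is, deleting the simple point $P_{ij}$; here $2\X_{ij}$ is no longer ACM (its staircase is not a chain), so neither Theorem~\ref{degP1xP1} nor \cite[Appl.~3.6]{GV12} is available, and this is the main obstacle. I would compute the separator module $M_{ij}=I_{2\X_{ij}}/I_{\W_{ij}}\subseteq R_{\W_{ij}}$ directly. Since $x_0$ is a non-zerodivisor on $R_{\W_{ij}}$, multiplication by $x_0$ and by $y_0$ is injective on $M_{ij}$, so $\HF_{M_{ij}}$ is non-decreasing; as $\deg(\W_{ij})-\deg(2\X_{ij})=1$ it stabilises to $1$, whence $\HF_{M_{ij}}\in\{0,1\}$ and its support is an up-set. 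To pin down the unique corner of that up-set I would restrict a separator $H$ of bidegree $(a,b)$ to the two rulings through $P_{ij}$: restricting to $L_{Q_i}$ forces $H(Q_i,-)$ to be divisible by $\prod_{p\neq j}L_{R_p}^2$ and nonzero (otherwise $H$ would vanish at $P_{ij}$), giving $b\ge 2(h-1)$, and symmetrically $a\ge 2(h-1)$; the explicit form $A_{ij}=(\prod_{e\neq i}L_{Q_e})^2(\prod_{p\neq j}L_{R_p})^2$ attains $(2h-2,2h-2)$ and, multiplied by suitable powers of forms nonvanishing at $P_{ij}$, produces separators in every bidegree $\succeq(2h-2,2h-2)$. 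Thus the support of $M_{ij}$ is exactly $\{(k,l)\succeq(2h-2,2h-2)\}$, so $\HF_{2\X_{ij}}(k,l)=\HF_{\W_{ij}}(k,l)$ except that it is smaller by $1$ once $(k,l)\succeq(2h-2,2h-2)$, and this is independent of $(i,j)$. Feeding the two constant functions $\HF_{\X_{ij}}$ and $\HF_{2\X_{ij}}$ into Corollary~\ref{corSec2.6} then shows that $\HF_{\Omega^1_{R_{\X_{ij}}/K}}$ does not depend on $(i,j)$. The balanced type $(h,h)$ is what keeps the ruling estimate symmetric, so that a single corner $(2h-2,2h-2)$ controls the whole non-ACM module $M_{ij}$.
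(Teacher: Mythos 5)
Your proof is correct, and its skeleton coincides with the paper's: both reduce the claim via Corollary~\ref{corSec2.6} to the $(i,j)$-independence of $\HF_{\X_{ij}}$ and of the Hilbert function of the thickening $2\X_{ij}$, both dispose of $\X_{ij}$ using the complete intersection structure, and both factor the passage from $2\X$ to $2\X_{ij}$ through the same intermediate ACM scheme $\W_{ij}=\sum_{(k,l)\neq(i,j)}2P_{kl}+P_{ij}$ and a unique separator of bidegree $(2h-2,2h-2)$. Where you diverge is the final step $\W_{ij}\to 2\X_{ij}$: the paper obtains the uniqueness and the degree of that separator by citing Theorems~\ref{pnmsepfrombetti} and~\ref{degP1xP1} and then appeals to a relabelling of the rulings, whereas you compute the separator module $M_{ij}=I_{2\X_{ij}}/I_{\W_{ij}}$ from scratch (monotonicity under multiplication by $x_0,y_0$, the degree count $\deg\W_{ij}-\deg 2\X_{ij}=1$, the restriction-to-rulings bound $a,b\ge 2h-2$, and the explicit separator $A_{ij}$). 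This is a genuine gain in rigour at exactly the delicate point: as you observe, $2\X_{ij}$ is not ACM for $h\ge 2$, so the hypothesis under which Theorem~\ref{pnmsepfrombetti} guarantees that the number of minimal separators equals the multiplicity is not literally satisfied, and your hands-on identification of $\HF_{M_{ij}}$ with the indicator function of the up-set of $(2h-2,2h-2)$ supplies the justification the paper leaves implicit. The paper's version buys brevity and uniformity with Proposition~\ref{indip1}; yours buys a self-contained argument that does not lean on ACM machinery for a non-ACM scheme.
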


\begin{proof}
For $i,j=1,\dots, h$, we let $\W_{ij}$ and
$\Y_{ij}$  be fat point schemes
$$
\W_{ij}=
{\textstyle \sum\limits_{(k,l)\in D_\X\setminus\{(i,j)\}} }
2P_{kl} + P_{ij}
\quad \mbox{and}\quad
\Y_{ij}=
{\textstyle \sum\limits_{(k,l)\in D_\X\setminus\{(i,j)\}} }
2P_{kl}.
$$
Note that $\supp(\W_{ij}) = \X$ and
$\supp(\Y_{ij}) = \X_{ij}$.
We can argue as in the proof of Proposition~\ref{indip1}
to get that the Hilbert function
of~$\W_{ij}$ does not depend on $(i,j)$.
Moreover, since $\X$ is complete intersection,
Theorem~\ref{pnmsepfrombetti} shows that
all the subsets $\X_{ij}$ have the same Hilbert function.
Let $f^*\in R_{\W_{ij}}$ be the minimal separator
of $P_{ij}$ in $\W_{ij}$.
By Theorem~\ref{pnmsepfrombetti},
it is unique since the
multiplicity of~$P_{ij}$ in~$\W_{ij}$ is $1$.
Let $F^* \in S$ be a representative of $f^*$.
Using Theorem \ref{degP1xP1}, we have that
$\deg(F^*)=(2(h-1),2(h-1))$ and this bidegree does not depend
on the choice of $(i,j)$. Since the support $\X$ is a complete
intersection, we can always renumber the lines such that
we have $(i,j)=(h,h)$, i.e., $P_{ij}=P_{h h}$.
Thus the Hilbert functions of the schemes $\Y_{ij}$ do
not depend of the choice of~$(i,j)$. Therefore an application
of~Corollary~\ref{corSec2.6} yields the desired property.
\end{proof}

\begin{example}
Consider Example \ref{sepDiff}, and reduce the
multiplicity of $P_{11}$ by one, i.e., let~$\Y_{11}$ be the subscheme
$\Y_{11} = 2P_{11}+3P_{12}+3P_{13}+3P_{21}+3P_{22}+3P_{23}$
of~$\Y$. Then the first difference
function $\Delta \HF_{\Omega^1_{R_{\Y_{11}}/K}}$ is given by
\begin{tiny}
$$
\left[\begin{array}{cccccccccccccc}
0 & 2 &2 & 2 & 2 & 2 & 2 & 2 & 2 & 1 & -1 & -1 & 0& \dots \\
2 & 4 & 4 & 4 & 4 & 4 & 4 & 4 & 3 & -1 &-1 & 0 & 0& \dots \\
2 & 4 & 4 & 4 & 4 & 4 & 3 &1 & -1 & 0 & 0 & 0 & 0& \dots \\
2 & 4 & 4 & 4 & 4 & 3 & -1 & -1 & 0 & 0 & 0 & 0 & 0& \dots \\
2 & 4 & 4 & 3 & 1 & -1 & 0 & 0 & 0 & 0 & 0 & 0& 0& \dots \\
2 & 4 & 3 & -1 & -1 & 0 & 0 & 0 & 0 & 0 & 0 & 0& 0& \dots \\
1 & 1 & -1 & 0 &0 & 0 & 0 & 0 & 0 & 0 & 0 & 0& 0 & \dots \\
-1 & -1 & 0 & 0 & 0 & 0 & 0 & 0 & 0 & 0 & 0 & 0& 0 & \dots \\
0 &0 & 0 & 0 & 0 & 0 & 0 & 0 & 0 & 0 & 0 & 0& 0 & \dots \\
\vdots & \vdots & \vdots &\vdots & \vdots & \vdots &\vdots &
\vdots &\vdots & \vdots & \vdots & \vdots & \vdots & \ddots
\end{array}\right].
$$
\end{tiny}
Moreover, every subscheme $\Y_{ij}$ of $\Y$, obtained by reducing
in $3\X$ the multiplicity of one point $P_{ij}$ by one, satisfies
that $\HF_{\Omega^1_{R_{\Y_{ij}}/K}} =
\HF_{\Omega^1_{R_{\Y_{11}}/K}}$.
\end{example}

In the final part of this section we look at the
Hilbert function of the K\"{a}hler
differential module of an equimultiple fat point scheme
whose support is an almost complete intersection.

\begin{definition}
A set of points $\X$ in $\popo$ is called an
{\it almost complete intersection (ACI)} if the number of minimal
generators of $I_\X$ is one more than the codimension,
i.e., if $I_{\X}$ has exactly three minimal generators.
\end{definition}

For a set of points in~$\popo$, the property of being an ACI
can be characterized as follows.

\begin{proposition}\label{charACI}
Let $\X$ be a set of points in~$\popo$
Then the following conditions are equivalent.
\begin{enumerate}
\item[(a)] The set~$\X$ is an almost complete intersection.

\item[(b)] The set~$\X$ is ACM and we have
$\alpha_{\X}=(\underbrace{d_1,\dots,d_1}_{a},
\underbrace{d_2,\dots,d_2}_{b})$
with $d_1>d_2$.
\end{enumerate}
Moreover, if~$\X$ is not a complete intersection,
these conditions are equivalent to:
\begin{enumerate}
\item[(c)] The set~$\X$ is ACM and
we have $I_{\X}^{(m)}=I_{\X}^{m}$ for all $m\ge 1$.

\item[(d)] The set~$\X$ is ACM and we have $I_{\X}^{(3)}=I_{\X}^{3}$.
\end{enumerate}
\end{proposition}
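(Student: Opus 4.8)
The plan is to establish (a)$\Leftrightarrow$(b) unconditionally and then, under the standing assumption that $\X$ is not a complete intersection, to close the cycle (b)$\Rightarrow$(c)$\Rightarrow$(d)$\Rightarrow$(b). Since (c)$\Rightarrow$(d) is trivial, the work concentrates on the structural equivalence (a)$\Leftrightarrow$(b) and on the two symbolic-power implications (b)$\Rightarrow$(c) and (d)$\Rightarrow$(b).

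For (a)$\Leftrightarrow$(b) I would start from the observation that the two projection forms $F_1=\prod_{i=1}^{r}L_{Q_i}$ and $F_2=\prod_{j=1}^{t}L_{R_j}$, of bidegrees $(r,0)$ and $(0,t)$, always lie in $I_\X$ and, by a degree count, are \emph{minimal} generators, since no $X$-pure (resp.\ $Y$-pure) form of smaller degree can vanish on all the rulings through~$\X$. They form a regular sequence, so $\mathcal{C}=(F_1,F_2)\subseteq I_\X$ is the complete intersection cutting out the full grid $\mathbb{G}=\pi_1(\X)\times\pi_2(\X)$, and $\X\subseteq\mathbb{G}$. For (b)$\Rightarrow$(a) I invoke the minimal bigraded resolution of ACM points from \cite{GV15} (see also \cite{GMR92}): its first Betti number equals one plus the number of distinct values of $\alpha_\X$, so a two-valued $\alpha_\X$ (values $d_1>d_2$ with $a$ and $b$ repetitions) yields exactly the three generators $F_1$, $F_2$, and the corner form $F_3=\bigl(\prod_{i=1}^{a}L_{Q_i}\bigr)\bigl(\prod_{j=1}^{d_2}L_{R_j}\bigr)$. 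Conversely, for (a)$\Rightarrow$(b) I start from $\mu(I_\X)=3$; then $I_\X=\mathcal{C}+(F_3)$ for a single mixed-bidegree form $F_3$, so $I_\X/\mathcal{C}$ is principal in the Gorenstein ring $S/\mathcal{C}$. Linking $\X$ through $\mathcal{C}$ identifies the residual $\mathbb{G}\setminus\X$, and a Hilbert--Burch argument applied to $(F_1,F_2,F_3)$ forces the single-step staircase and, in particular, the arithmetic Cohen--Macaulayness of~$\X$; this is (b).

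For (b)$\Rightarrow$(c) I would combine these explicit generators with linkage. The pair $(F_1,F_2)$ is a complete intersection whose residual scheme $\mathbb{G}\setminus\X$ is again a complete intersection, namely the $CI(b,\,d_1-d_2)$ occupying the lower-right block of the grid. Symbolic and ordinary powers coincide for complete intersections by \cite{ZS60}, App.~6, Lemma~5, so the strategy is to transport this equality across the linkage. Concretely, I would show that the almost complete intersection $I_\X$ is of linear type---equivalently, that the canonical surjection from the symmetric algebra of $I_\X$ onto its Rees algebra is an isomorphism---by verifying the $G_\infty$ (Fitting-ideal) condition, which holds because at every prime containing $I_\X$ the local number of generators is bounded by the height in this staircase configuration. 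Linear type reduces the claim to the unmixedness of the powers $I_\X^{m}$, which I would deduce from the Cohen--Macaulayness of the Rees algebra that holds for licci almost complete intersections. Unmixedness of $I_\X^{m}$ gives $I_\X^{m}=I_\X^{(m)}$ for all $m\ge1$, which is~(c).

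The implication (d)$\Rightarrow$(b) is where I expect the main difficulty, and I would argue by contraposition. Suppose $\X$ is ACM, not a complete intersection, and $\alpha_\X$ takes at least three distinct values; then $I_\X$ acquires a fourth minimal generator at an additional corner of the staircase. I must exhibit a bihomogeneous form vanishing to order three at every point of $\X$ yet lying outside $I_\X^{3}$. The tool is a Hilbert-function comparison: $\HF_{S/I_\X^{(3)}}=\HF_{3\X}$ is computed from $\alpha_{3\X}$ via \cite[Thm.~3.2]{GV04}, while $\HF_{S/I_\X^{3}}$ is controlled from below by the four or more generators of $I_\X$ and their products. Pinpointing a bidegree $(k,l)$ with $\dim_K(I_\X^{(3)})_{k,l}>\dim_K(I_\X^{3})_{k,l}$---the gap being created exactly by the generator that the third distinct value contributes---is the crux. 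I expect the test power to be $3$ rather than $2$ because the product structure of $\popo$ forces the quadratic gap to close, so the cube is the first degree in which a superfluous generator can obstruct the equality; isolating this separating bidegree in general, and confirming that it indeed separates, is the most delicate step of the argument.
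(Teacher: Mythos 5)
Your overall cycle ((a)$\Leftrightarrow$(b) unconditionally, then (b)$\Rightarrow$(c)$\Rightarrow$(d)$\Rightarrow$(b) when $\X$ is not a complete intersection) is logically sound, and several of your structural observations are correct: the pure forms $F_1=\prod_i L_{Q_i}$ and $F_2=\prod_j L_{R_j}$ are always minimal generators of $I_\X$ for bidegree reasons, and for an \emph{ACM} set of points the number of minimal generators of $I_\X$ is one more than the number of distinct entries of $\alpha_{\X}$. However, there are genuine gaps. The most serious one in (a)$\Rightarrow$(b) is that you deduce arithmetic Cohen--Macaulayness from the three-generator hypothesis ``by a Hilbert--Burch argument''; Hilbert--Burch \emph{presupposes} that $S/I_\X$ is Cohen--Macaulay (projective dimension $2$), so this is circular. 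Saturatedness and unmixedness of $I_\X$ do not by themselves force depth $2$ at the irrelevant ideal, so some extra input is indispensable. The paper closes exactly this gap by observing that a reduced set of points is a local complete intersection and invoking \cite[Lemma~2.5]{Pet98}, which says that a locally complete-intersection almost complete intersection is ACM; nothing playing this role appears in your argument.

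The two symbolic-power implications are also not established. For (b)$\Rightarrow$(c) you invoke linear type, the $G_\infty$ condition, licci-ness and Cohen--Macaulayness of the Rees algebra as a chain of assertions without verifying any of them for these staircase configurations; the paper instead quotes the explicit computation of \cite[Thm.~4.2]{FG17} for schemes of the type considered there. More importantly, for (d)$\Rightarrow$(b) you yourself identify the crux --- producing a bidegree in which $I_\X^{(3)}\ne I_\X^{3}$ whenever $\alpha_{\X}$ has at least three distinct values --- and explicitly leave it open. That step is the real content of the implication and is precisely what \cite[Cor.~1.2]{Co16} supplies (the paper routes it as (d)$\Rightarrow$(a)). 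As it stands, the proposal is a plausible roadmap whose easy parts are right, but neither the ACM step nor either symbolic-power implication is actually proved.
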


\begin{proof}
To show that~(a) implies~(b), we first note that an ACI
set of points~$\X$ is ACM by~\cite[Lemma 2.5]{Pet98}, 
since~$\X$ is obviously a local complete intersection.
The claimed shape of~$\alpha_{\X}$ follows then from
\cite[Cor. 5.6]{GV15}.
The converse implication follows also from
\cite[Cor. 5.6]{GV15}, since that corollary implies that~$I_{\X}$
is minimally generated by three homogeneous polynomials.

Next we prove that~(b) implies~(c).
By~\cite[Sec.~1]{FG17}, the set~$\X$ is of the type
(1.1) defined there, where $m_{11}=m_{21}=m_{12}=1$.
Hence Thm.~4.2 of this paper yields the claim.

Since~(c) is equivalent to~(d) by~\cite[Thm.~8.8]{GV15},
it remains to prove that~(d) implies~(a). This follows
from~\cite[Cor.~1.2]{Co16}.
\end{proof}

Now we are ready to determine the Hilbert function
of the module of K\"ahler differentials of
an equimultiple fat point scheme in~$\popo$ supported at an ACI 
set of points explicitly.

\begin{proposition}
Let $\X$ be an almost complete intersection of
distinct points in $\popo$ whose associated tuple is
$\alpha_{\X}=(\underbrace{d_1,\dots,d_1}_{a},
\underbrace{d_2,\dots,d_2}_{b})$
with $d_1>d_2$.
\begin{enumerate}
\item[(a)] For every $m\ge 1$, we have the following exact
sequence of bigraded $R_{m\X}$-modules
$$
0\longrightarrow I^m_{\X}/I^{m+1}_{\X} \longrightarrow
(S/I^m_{\X})^2(-1,0)\oplus (S/I^m_{\X})^2(0,-1)
\longrightarrow \Omega^1_{R_{m\X}/K}\longrightarrow 0.
$$

\item[(b)] For every $j\in\mathbb{N}$ and every $i \ge (m+1)(a+b)-1$,
we have
$$
\HF_{\Omega^1_{R_{m\X}/K}}(i,j) =
4\sum_{k=1}^{j}\alpha^*_k  + 2\alpha^*_{j+1}
- \delta_j
$$
where $\alpha^*_{m\X} = (\alpha^*_1,\alpha^*_2,\dots)$ with
$\alpha^*_i = \#\{\alpha_j \in \alpha_{m\X}
\mid \alpha_j \ge i\}$, and where
$$
\delta_j :=
\begin{cases}
(j+1)(a+b) & \mbox{\ if\ $j<(m+1)d_2$},\\
(j+1)a+(m+1)bd_2 & \mbox{\ if\ $(m+1)d_2\le j< (m+1)d_1$},\\
(m+1)(ad_1+bd_2) & \mbox{\ if\ $j\ge (m+1)d_1$}.\\
\end{cases}
$$
\end{enumerate}
\end{proposition}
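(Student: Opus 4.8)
The plan is to treat the two parts separately: part (a) will follow from the symbolic-power equality available for almost complete intersections together with the presentation in Theorem~\ref{generpropSec2.5}, while part (b) will be extracted from the general formula of Proposition~\ref{PropSec3.3} after computing the relevant tuples for $m\X$.

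For part (a), the decisive observation is that an almost complete intersection has exactly three minimal generators and is therefore never a complete intersection. Hence Proposition~\ref{charACI} applies in its full strength, and its condition~(c) gives $I_{\X}^{(m)}=I_{\X}^{m}$ for all $m\ge 1$. Consequently $R_{m\X}=S/I_{\X}^{m}$, the thickening of $m\X$ is $(m+1)\X$, and $I_{(m+1)\X}=I_{\X}^{m+1}$. I would then simply invoke Theorem~\ref{generpropSec2.5} with $\Y=m\X$ and $\V=(m+1)\X$; its exact sequence reads
$$
0\longrightarrow I_{\X}^{m}/I_{\X}^{m+1}\longrightarrow
R_{m\X}^2(-1,0)\oplus R_{m\X}^2(0,-1)\longrightarrow
\Omega^1_{R_{m\X}/K}\longrightarrow 0,
$$
and rewriting $R_{m\X}=S/I_{\X}^{m}$ yields exactly the stated sequence. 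This is the ACI analogue of Proposition~\ref{equimultipleoverCI}(a), with Proposition~\ref{charACI}(c) playing the role that the Zariski--Samuel lemma played in the complete intersection case.

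For part (b), I would first record the auxiliary tuples for $\Y=m\X$. Since all multiplicities equal $m$ and each horizontal ruling of $\X$ carries either $d_1$ or $d_2$ points (with $a$ rulings of size $d_1$ and $b$ of size $d_2$, as encoded by $\alpha_{\X}$), a direct computation from Notation~\ref{notationSec4.3} gives $a_{i0}=m\nu_i$, whence
$$
\hat{\alpha}_{m\X}=(\underbrace{(m+1)d_1,\dots,(m+1)d_1}_{a},
\underbrace{(m+1)d_2,\dots,(m+1)d_2}_{b}).
$$
Moreover each of the $r=a+b$ rulings contributes $l_i=m$ entries to $\alpha_{m\X}$, so $l=m(a+b)$ and therefore $l+r-1=(m+1)(a+b)-1$, which is precisely the lower bound on $i$ in the statement. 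With these data, Proposition~\ref{PropSec3.3}(a) applies verbatim and produces the first two summands $4\sum_{k=1}^{j}\alpha^*_k+2\alpha^*_{j+1}$ together with the correction term $\sum_{k=1}^{h}\#\{a\in\hat{\alpha}_{m\X}\mid a\ge k\}$, where $h=\min\{j+1,(m+1)d_1\}$ because $d_1>d_2$.

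It then remains to identify this correction term with $\delta_j$. Since $\hat{\alpha}_{m\X}$ takes only the two values $(m+1)d_1$ and $(m+1)d_2$, the count $\#\{a\in\hat{\alpha}_{m\X}\mid a\ge k\}$ equals $a+b$ for $k\le(m+1)d_2$, equals $a$ for $(m+1)d_2<k\le(m+1)d_1$, and vanishes for larger $k$. Summing over $k=1,\dots,h$ separately in the three ranges $j<(m+1)d_2$, $(m+1)d_2\le j<(m+1)d_1$, and $j\ge(m+1)d_1$ then reproduces the three cases defining $\delta_j$ after elementary arithmetic. The only point requiring care is the bookkeeping at the boundaries of these ranges and the fact that $h$ saturates at $(m+1)d_1$ in the last range, forcing the correction term to stabilize; this is the main (though entirely routine) obstacle, and once it is handled the stated formula follows.
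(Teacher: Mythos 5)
Your proposal is correct and follows essentially the same route as the paper: part (a) via the symbolic-power equality from Proposition~\ref{charACI} (valid since an ACI is never a complete intersection) combined with Theorem~\ref{generpropSec2.5}, and part (b) by computing $\hat{\alpha}_{m\X}=((m+1)d_1,\dots,(m+1)d_1,(m+1)d_2,\dots,(m+1)d_2)$, $r=a+b$, $l=m(a+b)$ and feeding these into Proposition~\ref{PropSec3.3}(a). The only difference is that you expand the correction term $\sum_{k=1}^{h}\#\{a\in\hat{\alpha}_{m\X}\mid a\ge k\}$ into the three explicit cases for $\delta_j$, which the paper leaves implicit; your case analysis is accurate.
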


\begin{proof}
By Proposition~\ref{charACI}, we have $I_{\X}^{(m)}=I_{\X}^{m}$
for all $m\ge 1$.
So, claim (a) follows from Theorem~\ref{generpropSec2.5}.
Moreover, in this case the equimultiple fat point
scheme $\Y=m\X$ is not ACM and its associated tuple
is of the form
$$
\alpha_{\Y} =
(\underbrace{md_1,\dots,md_1}_{a},
\dots,\underbrace{d_2,\dots,d_2}_{b}).
$$
In particular, we have $r=a+b$, $l=m(a+b)$, $t=d_1$
and $l'=md_1$. 
Hence we may apply Proposition~\ref{PropSec3.3} with
$\delta_j = \sum_{k=1}^{h} \#\{a \in \hat{\alpha}_{\Y} \mid a \ge k \}$,
where $h = \min\{j+1,(m+1)d_1\}$, and get claim~(b).
\end{proof}

This proposition allows us to calculate 
many Hilbert functions of K\"ahler differential
modules which would otherwise be beyond the reach
of computer algebra systems. Let us see a comparatively
easy example.

\begin{example}
Let $i\ge 0$, let $Q_i = R_i = [1:i] \in \P^1$, and let
$P_{ij}$ denote the point $Q_i\times R_j$ in~$\popo$.
We let $\X$ be the almost complete intersection
$\X =\{P_{11},P_{12},P_{13},P_{21},P_{22},
P_{23}, P_{31},P_{32}\}$
in~$\popo$, and let $\Y= 3\X$ be the equimultiple
fat point scheme in $\popo$ supported at~$\X$.
Then we have $\alpha_{\X} = (3,3,2)$ and
$\alpha_{\Y} = (9,9,6,6,6,4,3,3,2)$.
Thus the Hilbert function $\HF_{\Omega^1_{R_{\Y}/K}}$
is given by
\begin{tiny}
$$
\left[
\begin{matrix} 
0 & 2 & 4 & 6 & 8 & 10 & 12 & 14 & 16 & 17 & 16 & 
15 & 15 & 15 &\cdots \\
2 & 8 & 14 & 20 & 26 & 32 & 38 & 44 & 50 & 52 & 50 & 
48 & 48 & 48 &\cdots \\
4 & 14 & 24 & 34 & 44 & 54 & 64 & 74 & 83 & 84 & 81 & 
79 & 79 & 79 &\cdots \\
6 & 20 & 34 & 48 & 62 & 76 & 89 & 100 & 108 & 109 & 106 & 
104 & 104 & 104 &\cdots \\
8 & 26 & 44 & 62 & 80 & 98 & 112 & 121 & 126 & 127 & 125 & 
123 & 123 & 123 &\cdots \\
10 & 32 & 54 & 76 & 98 & 119 & 132 & 138 & 143 & 144 & 142 & 
140 & 140 & 140 &\cdots \\
12 & 38 & 64 & 89 & 112 & 132 & 144 & 148 & 153 & 155 & 153 & 
151 & 151 & 151 &\cdots \\
14 & 44 & 74 & 100 & 121 & 138 & 148 & 153 & 158 & 160 & 158 & 
156 & 156 & 156 &\cdots \\
16 & 50 & 83 & 108 & 126 & 143 & 153 & 158 & 164 & 166 & 164 & 
162 & 162 & 162 &\cdots \\
17 & 52 & 84 & 109 & 127 & 144 & 155 & 160 & 166 & 168 & 166 & 
164 & 164 & 164 &\cdots \\
16 & 50 & 81 & 106 & 125 & 142 & 153 & 158 & 164 & 166 & 164 & 
162 & 162 & 162 &\cdots \\
15 & 48 & 79 & 104 & 123 & 140 & 151 & 156 & 162 & 164 & 162 & 
160 & 160 & 160 &\cdots \\
15 & 48 & 79 & 104 & 123 & 140 & 151 & 156 & 162 & 164 & 162 & 
160 & 160 & 160 &\cdots \\
15 & 48 & 79 & 104 & 123 & 140 & 151 & 156 & 162 & 164 & 162 & 
160 & 160 & 160 &\cdots\\
\svdots & \svdots & \svdots & \svdots & \svdots &
\svdots & \svdots & \svdots & \svdots & \svdots &
\svdots & \svdots & \svdots & \svdots & \sddots
\end{matrix}
\right].
$$
\end{tiny}
\end{example}

%
%

\bigbreak
\section{The K\"ahler Different for
a Fat Point Scheme in~$\popo$}

In this section we examine the initial Fitting ideal
of the K\"{a}hler differential module, commonly called the
K\"ahler different, for a fat point scheme in~$\popo$.
In the following we restrict our attention to ACM fat
point schemes~$\Y$ in~$\popo$.
In this case we may assume that $x_0,y_0$ is a regular
sequence in~$R_\Y$. We let $R_o = K[x_0,y_0]$.
Then the algebra $R/R_o$ is finite and the monomorphism
$R_o\hookrightarrow R$ defines a Noether normalization.

Suppose that $\{F_1,\dots,F_r\}$ is a bihomogeneous set
of generators of~$I_{\Y}$.
According to \cite[Prop.~4.19]{Ku86},
the K\"{a}hler differential module $\Omega^1_{R_\Y/R_o}$
has a presentation
\begin{equation*}\label{formulaKae.01}
0\longrightarrow \mathcal{K} \longrightarrow
R_\Y dX_1\oplus R_\Y dY_1
\longrightarrow \Omega^1_{R_\Y/R_o}\longrightarrow 0
\end{equation*}
where $\mathcal{K}$ is generated by the elements
$\frac{\partial F_{j}}{\partial x_1}dX_1
+\frac{\partial F_{j}}{\partial y_1}dY_1$
such that $j\in\{1,\dots,r\}$.
The Jacobian matrix
$$
\begin{bmatrix}
\frac{\partial F_{1}}{\partial x_1}& \cdots &
\frac{\partial F_{r}}{\partial x_1} \\
\frac{\partial F_{1}}{\partial y_1}& \cdots &
\frac{\partial F_{r}}{\partial y_1}
\end{bmatrix}
$$
is a relation matrix of $\Omega^1_{R_\Y/R_o}$
with respect to $\{dx_1,dy_1\}$. The initial Fitting
ideal $F_0(\Omega^1_{R_\Y/R_o})$ of~$\Omega^1_{R_\Y/R_o}$
is the bihomogeneous ideal of~$R_\Y$ generated by all
2-minors of the Jacobian matrix.

\begin{definition}
The initial Fitting ideal of~$\Omega^1_{R_\Y/R_o}$
is denoted by
$$
\vartheta_\Y = F_0(\Omega^1_{R_\Y/R_o})
$$
and is called the {\it K\"{a}hler different} of~$R_\Y/R_o$
(or for $\Y$ w.r.t. $\{x_0,y_0\}$).
\end{definition}

Since the K\"ahler different is a bihomogeneous ideal
in~$R_{\Y}$, we can examine its bigraded Hilbert function.
The following proposition provides some basic properties
of this function.

\begin{proposition} \label{propSec5.12}
Let $\Y \!=\!\sum_{(i,j)\in D_\X} \!m_{ij}P_{ij}$
be an ACM fat point scheme in~$\popo$
supported at $\X$.
\begin{enumerate}
\item[(a)]  For all $(i,j)\in\N^2$, we have
  $\HF_{\vartheta_\Y}(i,j)\le \HF_{\vartheta_\Y}(i+1,j)$
  and $\HF_{\vartheta_\Y}(i,j)\le\HF_{\vartheta_\Y}(i,j+1)$.

\item[(b)] We have $\HF_{\vartheta_\Y} = 0$
  if and only if $m_{ij}\ge 2$ for all $P_{ij}\in\X$.

\item[(c)] Let $s'$ be the number of points
  $P_{ij}\in\X$ such that $m_{ij}=1$. Then we have
  $\HF_{\vartheta_\Y}(i,j) = s'$ for all large
  enough $i,j\gg 0$.
\end{enumerate}
\end{proposition}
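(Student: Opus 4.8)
The plan is to treat the three claims separately, deriving (b) from (a) and (c). For (a), I would use that $\Y$ is ACM, so $x_0,y_0$ are non-zerodivisors on $R_\Y$. Since $\vartheta_\Y$ is an ideal of $R_\Y$, multiplication by $x_0$ restricts to a $K$-linear map $(\vartheta_\Y)_{i,j}\to(\vartheta_\Y)_{i+1,j}$ which is injective because $x_0$ is a non-zerodivisor; this gives $\HF_{\vartheta_\Y}(i,j)\le\HF_{\vartheta_\Y}(i+1,j)$, and multiplication by $y_0$ gives the second inequality. Thus (a) is immediate.

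For the easy half of (b), I would argue directly in $S$. Each generator $F_k$ of $I_\Y$ lies in every $\wp_{ij}^{m_{ij}}$; since $\wp_{ij}$ is generated by a $(1,0)$-form and a $(0,1)$-form, the Leibniz rule shows $\partial F_k/\partial X_1$ and $\partial F_k/\partial Y_1$ lie in $\wp_{ij}^{m_{ij}-1}$. Hence each $2$-minor of the Jacobian matrix, being a difference of products of two such partials, lies in $\wp_{ij}^{2m_{ij}-2}$, and if $m_{ij}\ge 2$ then $\wp_{ij}^{2m_{ij}-2}\subseteq\wp_{ij}^{m_{ij}}$. If every $m_{ij}\ge 2$ this holds for all $(i,j)$, so every $2$-minor lies in $\bigcap_{(i,j)}\wp_{ij}^{m_{ij}}=I_\Y$ and vanishes in $R_\Y$, giving $\vartheta_\Y=0$. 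The remaining half of (b) will follow from (c): if some $m_{ij}=1$ then $s'\ge 1$, so $\HF_{\vartheta_\Y}(i,j)=s'\ge 1$ for $i,j\gg 0$ and $\vartheta_\Y\ne 0$. (In fact (a) and (c) together give both directions, since a non-decreasing function with limit $s'$ vanishes identically precisely when $s'=0$.)

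The heart of the matter is (c), which I would prove by localizing at $x_0,y_0$ and using that Fitting ideals commute with localization. After the coordinate change all points of $\X$ lie in the chart $\{x_0y_0\ne 0\}$. Writing $B=(R_\Y)_{x_0y_0}$ with its $\Z^2$-grading, the units $x_0,y_0$ identify $B$ with $A[x_0^{\pm 1},y_0^{\pm 1}]$, where $A:=B_{0,0}$ is the affine coordinate ring of $\Y$, so $A\cong\prod_{(i,j)}\mathcal{O}_{\Y,P_{ij}}$ with $\mathcal{O}_{\Y,P_{ij}}\cong K[u,v]/(u,v)^{m_{ij}}$. Since $R_o$ localizes to $K[x_0^{\pm 1},y_0^{\pm 1}]$, base change gives $\Omega^1_{B/(R_o)_{x_0y_0}}\cong B\otimes_A\Omega^1_{A/K}$, whence $(\vartheta_\Y)_{x_0y_0}=B\cdot F_0(\Omega^1_{A/K})$, whose degree $(0,0)$ part is $\vartheta_{A/K}:=F_0(\Omega^1_{A/K})$. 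As $\Omega^1$ and $F_0$ distribute over the finite product, I compute $\vartheta_{A/K}$ factor by factor: for $m_{ij}=1$ one has $\mathcal{O}_{\Y,P_{ij}}=K$ and $F_0(\Omega^1_{K/K})$ is the unit ideal, while for $m_{ij}\ge 2$ the same Leibniz computation as in (b) shows every $2$-minor lies in $(u,v)^{2m_{ij}-2}\subseteq(u,v)^{m_{ij}}=0$, so $F_0=0$. Therefore $\dim_K\vartheta_{A/K}=s'$.

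It remains to identify this dimension with the stable value of $\HF_{\vartheta_\Y}$. For each $(i,j)$ the map $(\vartheta_\Y)_{i,j}\to\vartheta_{A/K}$, $f\mapsto f/(x_0^iy_0^j)$, is injective because $x_0,y_0$ are non-zerodivisors, and it becomes surjective once $i,j$ are large enough to clear the denominators of a finite $K$-basis of $\vartheta_{A/K}$; hence $\HF_{\vartheta_\Y}(i,j)=\dim_K\vartheta_{A/K}=s'$ for $i,j\gg 0$, proving (c). I expect the main obstacle to be the bookkeeping in the previous paragraph, namely passing correctly from the relative different $F_0(\Omega^1_{R_\Y/R_o})$ to the absolute affine one $F_0(\Omega^1_{A/K})$ through the $\Z^2$-graded identification $B\cong A[x_0^{\pm 1},y_0^{\pm 1}]$ and the commutation of Fitting ideals with localization; once that is in place, (a) and (b) are comparatively routine.
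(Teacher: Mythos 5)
Your argument is correct, and for the substantive parts it takes a genuinely different route from the paper. Parts (a) and the ``only if'' direction of (b) coincide with the paper's proof (the paper just asserts that the $2$-minors lie in $\wp_{ij}^{m_{ij}}$; your Leibniz computation via $\wp_{ij}^{2m_{ij}-2}\subseteq\wp_{ij}^{m_{ij}}$ is the intended justification). For (c), however, the paper stays entirely in the bigraded ring: it gets the upper bound $\HF_{\vartheta_\Y}(i,j)\le s'$ from the containment $\vartheta_\Y\subseteq I/I_\Y$ with $I=\bigcap_{m_{ij}>1}\wp_{ij}^{m_{ij}}$, and the matching lower bound by exhibiting the explicit elements $\overline{F}_{ij}^{\,2}\in\vartheta_\Y$, where $F_{ij}$ is a minimal separator of a simple point $P_{ij}$; the ``if some $m_{ij}=1$ then $\vartheta_\Y\ne 0$'' half of (b) is proved the same way, by computing the Jacobian minor of $L_{Q_{i_0}}F$ and $L_{R_{j_0}}F$ to land on $a_0b_0F^2$ modulo $I_\Y$. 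Your localization argument --- identifying $(R_\Y)_{x_0y_0}$ with $A[x_0^{\pm1},y_0^{\pm1}]$, using base change of $\Omega^1$ and of Fitting ideals, and computing $F_0(\Omega^1_{A/K})$ factor by factor on $A\cong\prod_{(i,j)}K[u,v]/(u,v)^{m_{ij}}$ --- is more conceptual: it explains the stable value $s'$ as the length of the affine K\"ahler different and delivers (b) as a formal corollary of (a) and (c). What it gives up is effectivity: the paper's separator computation yields explicit degrees $(t_1,t_2)$, expressed via Theorem~\ref{degP1xP1}, beyond which $\HF_{\vartheta_\Y}(i,j)=s'$, and the squared separators it produces are reused later (e.g.\ in Proposition~\ref{CBCI}), whereas your ``large enough to clear denominators'' bound is not explicit. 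Both proofs are valid; just make sure, when invoking base change for $\Omega^1$, that you use that $K[x_0,y_0]$ is a polynomial ring (Noether normalization), so that $(R_o)_{x_0y_0}$ really is the Laurent ring $K[x_0^{\pm1},y_0^{\pm1}]$ and $B\cong A\otimes_K(R_o)_{x_0y_0}$.
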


\begin{proof}
Claim (a) follows from the fact that $\vartheta_\Y$
is a bihomogeneous ideal of~$R_\Y$ and $x_0,y_0$ are
non-zerodivisors of~$R_\Y$.

To show~(b), suppose that $m_{ij}\ge 2$ for all
$(i,j)\in D_\X$. Write
$\wp_{ij} = \langle L_{Q_i}, L_{R_j}\rangle$
with two linear forms $L_{Q_i}\in S_{1,0}$
and $L_{R_j}\in S_{0,1}$. Then
$I_\Y = \bigcap_{(i,j)\in D_\X}\wp_{ij}^{m_{ij}}$.
For any two bihomogeneous minimal generators $F,G$
of~$I_\Y$, it is easy to see that
$\frac{\partial(F,G)}{\partial(X_1,Y_1)} \in
\wp_{ij}^{m_{ij}}$ for all $(i,j)\in D_\X$.
Since $\vartheta_\Y$ is generated by the images
in~$R_\Y$ of the elements of the form
$\frac{\partial(F,G)}{\partial(X_1,Y_1)}$ where $F,G$
are bihomogeneous minimal generators of $I_\Y$, we get
$\vartheta_\Y =\langle 0\rangle$.

Conversely, w.l.o.g. assume that $m_{i_0j_0}=1$.
Let $F\in S$ be a separator for $P_{i_0j_0}$.
Obviously, we have $L_{Q_{i_0}}F,L_{R_{j_0}}F\in I_\Y$.
We write $P_{i_0j_0} =[a_0:a_1]\times[b_0:b_1]$
with $a_0\ne 0$ and $b_0\ne 0$,
and so $L_{Q_{i_0}}= a_1X_0-a_0X_1$ and $L_{R_{j_0}}=b_1Y_0-b_0Y_1$.
We also see that
$$
\begin{aligned}
\frac{\partial(L_{Q_{i_0}} F,L_{R_{j_0}}F)}{\partial(X_1,Y_1)}
&= \det
\begin{bmatrix}
F\tfrac{\partial L_{Q_{i_0}}}{\partial X_1}+
L_{Q_{i_0}}\tfrac{\partial F}{\partial X_1} &
F\tfrac{\partial L_{R_{j_0}}}{\partial X_1}+
L_{R_{j_0}}\tfrac{\partial F}{\partial X_1} \\
F\tfrac{\partial L_{Q_{i_0}}}{\partial Y_1}+
L_{Q_{i_0}}\tfrac{\partial F}{\partial Y_1} &
F\tfrac{\partial L_{R_{j_0}}}{\partial Y_1}+
L_{R_{j_0}}\tfrac{\partial F}{\partial Y_1}
\end{bmatrix}\\
&= F^2(\tfrac{\partial L_{Q_{i_0}}}{\partial X_1}
\tfrac{\partial L_{R_{j_0}}}{\partial Y_1} -
\tfrac{\partial L_{Q_{i_0}}}{\partial Y_1}
\tfrac{\partial L_{R_{j_0}}}{\partial X_1}) + G \\
&= a_0b_0F^2 + G
\end{aligned}
$$
for some $G\in I_\Y$. Since $a_0b_0\ne 0$, we obtain
$\overline{F}^2 \in \vartheta_\Y$. Furthermore, $F^2$ is also
a separator for $P_{i_0j_0}$, and hence $\overline{F}^2\ne 0$.

Finally we prove~(c).
Let $I = \bigcap_{(i,j)\in D_\X,m_{ij}>1} \wp_{ij}^{m_{ij}}$.
As in the proof of~(b), we have
$\vartheta_\Y \subseteq \wp_{ij}^{m_{ij}}/I_\Y$ if $m_{ij}>1$,
and so $\vartheta_\Y \subseteq I/I_\Y$. This implies
$$
\HF_{\vartheta_\Y}(i,j) \le \HF_{I/I_\Y}(i,j)
= \HF_\Y(i,j)-\HF_{S/I}(i,j)
$$
for all $(i,j)\in\N^2$.
Let $r= \#\pi_1(\X)$ and $t = \#\pi_2(\X)$.
Set $l = \sum_{i=1}^r \max\{m_{ie} \mid (i,e)\in D_\X\}$
and $l'= \sum_{j=1}^t \max\{m_{ej} \mid (e,j)\in D_\X\}$
(see also Notation~\ref{notationSec4.3}).
We have $\HF_\Y(i,j)=\sum_{(i,j)\in D_\X}\binom{m_{ij}+1}{2}$
and $\HF_{S/I}(i,j)=
\sum_{(i,j)\in D_\X, m_{ij}>1}\binom{m_{ij}+1}{2}$
for all $(i,j) \succeq(l,l')$, and so we get
$\HF_{\vartheta_\Y}(i,j) \le s'$ for $(i,j) \succeq(l,l')$.
It follows from (a) that $\HF_{\vartheta_\Y}(i,j) \le s'$
for all $(i,j)\in \N^2$.
Moreover, for each point $P_{ij}$ with $m_{ij}=1$ and
for which $F_{ij}$ is a minimal separator for $P_{ij}$,
we have $\overline{F}^2_{ij} \in \vartheta_\Y$
by the same reasoning as in the proof of part~(b).
According to Theorem~\ref{degP1xP1}, we know that
$
\deg(F_{ij}) =
\big( {\textstyle\sum\limits_{(e,j)\in D_\X}} m_{ej}-1,
{\textstyle\sum\limits_{(i,e)\in D_\X}} m_{ie}-1 \big).
$
We set
\[
t_1 = 2\max\big\{\,
{\textstyle\sum\limits_{(e,j)\in D_\X}} m_{ej}-1
\,\mid\, (i,j)\in D_\X, m_{ij}=1 \,\big\}
\]
and
\[
t_2 = 2\max\big\{\,
{\textstyle\sum\limits_{(i,e)\in D_\X}} m_{ie}-1
\,\mid\, (i,j)\in D_\X, m_{ij}=1 \,\big\}.
\]
Since $F^2_{ij}$ is also a separator for $P_{ij}$,
we have
$$
s' = \dim_K (\langle \overline{F}^2_{ij}
\,\mid\, (i,j)\in D_\X, m_{ij}=1\rangle)_{i',j'}
\le \HF_{\vartheta_\Y}(i',j') \le s'
$$
for all $(i',j') \succeq (t_1,t_2)$.
Hence the equality $\HF_{\vartheta_\Y}(i,j) = s'$ holds true
for all $(i,j) \succeq (t_1,t_2)$.
\end{proof}

If all multiplicities are 1, i.e., in the case
of an ACM set of reduced points, we can improve
the description of the Hilbert function
of~$\vartheta_{\X}$ as follows.

\begin{proposition}\label{HFofThetaForPoints}
Let $\X$ be an ACM set of reduced points in $\popo$,
let $r = \# \pi_1(\X)$, and let $t = \#\pi_2(\X)$.
\begin{enumerate}
\item[(a)] If $r \ge 2$, then $\HF_{\vartheta_\X}(0,j) = 0$
  for $j<t-1$.

\item[(b)] If $t \ge 2$, then $\HF_{\vartheta_\X}(i,0) = 0$
  for $i<r-1$.

\item[(c)] There exists a non-zerodivisor of $R_\X$
  which is contained in $\vartheta_\X.$

\item[(d)] Let $j\in\N$, let $\{h_1,\dots,h_u\}$ be a
  bihomogeneous minimal system of generators of~$\vartheta_\X$,
  let $\deg(h_k)=(i_k,j_k)$ for $k=1,\dots,u$,
  and let $i_0 = \max\{i_k \mid j_k\le j, k=1,\dots,u\}$.
  For $i\ge i_0$, if
  $\HF_{\vartheta_\X}(i,j)= \HF_{\vartheta_\X}(i+1,j)$
  then $\HF_{\vartheta_\X}(i+1,j)= \HF_{\vartheta_\X}(i+2,j)$.

\item[(e)] We have $\HF_{\vartheta_\X}(i,j) = s$ for
  all $(i,j) \succeq (2r-2,2t-2)$.
\end{enumerate}
\end{proposition}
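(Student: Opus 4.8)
The plan is to make part~(e) the quantitative core and to derive the other four parts from it together with two short degree computations. For~(e) I would specialize the proof of Proposition~\ref{propSec5.12} to reduced~$\X$: there all multiplicities equal~$1$, so $s'=s$, and that proof already yields $\HF_{\vartheta_\X}(i,j)\le s$ for every $(i,j)\in\N^2$ as well as $\HF_{\vartheta_\X}(i,j)=s$ for $(i,j)\succeq(t_1,t_2)$. Now $t_1=2(M-1)$, where $M$ is the largest number of points of~$\X$ lying on one vertical ruling; since $M\le r$ we get $t_1\le 2r-2$, and symmetrically $t_2\le 2t-2$. As $(2r-2,2t-2)\succeq(t_1,t_2)$, the monotonicity of Proposition~\ref{propSec5.12}(a) gives $\HF_{\vartheta_\X}(i,j)=s$ for all $(i,j)\succeq(2r-2,2t-2)$, which is~(e).

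Part~(c) is then a one-line dimension count: by~(e) we have $\HF_{\vartheta_\X}(2r-2,2t-2)=s$, so the inclusion $(\vartheta_\X)_{2r-2,2t-2}\subseteq(R_\X)_{2r-2,2t-2}$ is an equality of $s$-dimensional spaces (using $\HF_{R_\X}\le s$ everywhere), and hence $\vartheta_\X$ contains the non-zerodivisor $\overline{x_0^{2r-2}y_0^{2t-2}}$. For parts~(a) and~(b) I would work from the generators of~$\vartheta_\X$, namely the images of the Jacobian minors $\frac{\partial(F,G)}{\partial(X_1,Y_1)}$ with $F,G$ minimal generators of~$I_\X$; such a minor built from bidegrees $(a,b)$ and $(c,d)$ has bidegree $(a+c-1,b+d-1)$. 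A minor of first degree~$0$ forces one generator, say~$F$, to have first degree~$0$, i.e.\ $F\in K[Y_0,Y_1]$; since a nonzero form of bidegree $(0,b)$ vanishing on~$\X$ must vanish at the $t$ distinct second coordinates, we get $b\ge t$. Then $\partial F/\partial X_1=0$, the minor equals $-\frac{\partial F}{\partial Y_1}\frac{\partial G}{\partial X_1}$, and its second degree is $(b-1)+d\ge t-1$. Multiplication by any element of~$R_\X$ only increases the second degree, so every first-degree-$0$ element of~$\vartheta_\X$ has second degree at least~$t-1$, proving~(a); the symmetric argument proves~(b).

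Part~(d) is a module-theoretic statement about the non-zerodivisor $x_0\in(R_\X)_{1,0}$. Since $R_\X=S/I_\X$ is generated over~$K$ in bidegrees $(1,0)$ and $(0,1)$, we have $(R_\X)_{a,b}=x_0(R_\X)_{a-1,b}+x_1(R_\X)_{a-1,b}$ for $a\ge1$. Writing $\vartheta_\X=\sum_k R_\X\,\overline{h}_k$ and using that every generator with $j_k\le j$ has $i_k\le i_0$, one obtains for $i\ge i_0$ the identity $(\vartheta_\X)_{i+2,j}=x_0(\vartheta_\X)_{i+1,j}+x_1(\vartheta_\X)_{i+1,j}$. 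The hypothesis $\HF_{\vartheta_\X}(i,j)=\HF_{\vartheta_\X}(i+1,j)$ together with the injectivity of multiplication by~$x_0$ (Proposition~\ref{propSec5.12}(a)) gives $(\vartheta_\X)_{i+1,j}=x_0(\vartheta_\X)_{i,j}$, so that $x_1(\vartheta_\X)_{i+1,j}=x_0x_1(\vartheta_\X)_{i,j}\subseteq x_0(\vartheta_\X)_{i+1,j}$; substituting this back yields $(\vartheta_\X)_{i+2,j}=x_0(\vartheta_\X)_{i+1,j}$ and therefore $\HF_{\vartheta_\X}(i+2,j)=\HF_{\vartheta_\X}(i+1,j)$.

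The bookkeeping in~(e) and the dimension count in~(c) are routine, and the degree argument for~(a),(b) is elementary once one observes that the first-degree-$0$ part of~$I_\X$ starts in bidegree $(0,t)$. I expect the step requiring the most care to be the bidegree identity in~(d): one must verify that for $i\ge i_0$ no minimal generator of~$\vartheta_\X$ of second degree~$\le j$ can contribute in first degree $i+2$, so that all of $(\vartheta_\X)_{i+2,j}$ is pushed up from first degree $i+1$ by $x_0$ and~$x_1$. This generator-degree bookkeeping, rather than any single sharp inequality, is the main obstacle.
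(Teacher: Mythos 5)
Your proposal is correct, but for parts (a), (b) and (c) it follows a genuinely different route from the paper, while (d) and (e) essentially coincide with the paper's arguments. For (c) the paper exhibits an explicit witness: writing $I_{\pi_1(\X)}=\langle F_1\rangle$ and $I_{\pi_2(\X)}=\langle F_2\rangle$, the minor $\frac{\partial(F_1,F_2)}{\partial(x_1,y_1)}=\frac{\partial F_1}{\partial x_1}\cdot\frac{\partial F_2}{\partial y_1}$ lies in $\vartheta_\X$ and vanishes at no point of~$\X$, hence is a non-zerodivisor. Your dimension count --- $(\vartheta_\X)_{2r-2,2t-2}$ has dimension $s$ by~(e), while $\HF_{R_\X}\le s$ everywhere, so the two components coincide and $\overline{x_0^{2r-2}y_0^{2t-2}}\in\vartheta_\X$ --- is shorter but makes (c) depend on (e); since (e) rests only on Proposition~\ref{propSec5.12} there is no circularity, though the paper's version buys a concrete element of much smaller degree. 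For (a) and (b) the paper decomposes $I_\X=L_{Q_1}I_\V+\langle F\rangle$ with $F=L_{R_1}\cdots L_{R_t}$ and $\V=\X\setminus\pi_1^{-1}(Q_1)$ and then inspects the resulting generators, with a case distinction on~$r$; your direct bidegree analysis of the Jacobian minors (a minor of first degree~$0$ forces one generator into $K[Y_0,Y_1]_b$ with $b\ge t$, so the minor has second degree $\ge t-1$, and multiplication by $R_\X$ cannot lower either degree) is cleaner, avoids the case distinction, and in fact does not even use the hypothesis $r\ge 2$. Your (d) is the paper's argument almost verbatim: injectivity of multiplication by $x_0$ turns the equality of dimensions into $(\vartheta_\X)_{i+1,j}=x_0(\vartheta_\X)_{i,j}$, and the decomposition $f=x_0f_0+x_1f_1$ with $f_0,f_1\in(\vartheta_\X)_{i+1,j}$, valid precisely because $i\ge i_0$, closes the induction. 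Finally, your (e) correctly supplies the bound $(t_1,t_2)\preceq(2r-2,2t-2)$ (via $M\le r$ and its mirror) that the paper leaves implicit when it states that (e) ``follows from the preceding proposition.''
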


\begin{proof}
First we prove claim~(a).
Then claim~(b) follows similarly.
Note that $\pi_2(\X) =\{R_1,\dots,R_{t}\}\subseteq \P^1$
is a complete intersection and
$I_{\pi_2(\X)} = \langle F\rangle\subseteq K[Y_0,Y_1]$
where $F=L_{R_1}\cdots L_{R_t}$ and $L_{R_j}$ is
the $(0,1)$-form that vanishes at $R_j$.
Let $\alpha_\X=(\alpha_1,\dots,\alpha_r)$ be the
associated tuple of $\X$.
Since $\X$ is ACM, we have $\alpha_1=t$ or
$\X_{Q_1}:=\pi_1^{-1}(Q_1)
=\{Q_1\times R_1,\dots,Q_1\times R_{t}\}$.
Then $I_{\X_{Q_1}} = \langle L_{Q_1}, F\rangle$.
Set $\V =\X\setminus\X_{Q_1}$. Clearly,
$F\in I_\X=I_\V \cap I_{\X_{Q_1}}.$

We want to show that $I_\X = L_{Q_1}I_\V+\langle F\rangle$.
It is clear that
$L_{Q_1}I_\V+\langle F\rangle \subseteq I_{\X}$.
Let $G\in I_{\X}.$ Then $G=L_{Q_1}H_1+FH_2$
with $H_1,H_2\in S$. Since $G,F\in I_{\X}$, we have
$L_{Q_1}H_1\in I_\X \subseteq I_{\V}$,
and so $(L_{Q_1}H_1)(Q\times R) =0$ for all $Q\times R\in\V$.
But $Q \ne Q_1$ for every $Q\times R\in\V$, this implies
$H_1(Q\times R)=0$ for every $Q\times R\in\V$.
Thus $H_1\in I_{\V}$.

If $r=2$, then $I_{\V}=I_{\X_{Q_2}}=\langle L_{Q_2}, G\rangle$
for some $G\in K[Y_0,Y_1]$.
Then $I_{\X}=\langle L_{Q_1}L_{Q_2},L_{Q_1}G,F \rangle$.
The element $\frac{\partial(L_{Q_1}G,F)}{\partial(x_1,y_1)}$
has degree $(0, t-1)+\deg(G)=(0,j_0)$
and any other bihomogeneous element of~$\vartheta_\X$
has degree $(0,j)$ with $j\ge j_0>t-1$.
Hence we get $\HF_{\vartheta_\X} (0,j)=0$ for $j\le t-1$.
In the case $r>2$ we also have the above equality, since
$I_{\V}\subseteq I_{\X_{Q_2}}=\langle L_{Q_2}, G\rangle$.

Now we prove (c). Since $\pi_1(\X)$ and $\pi_2(\X)$
are complete intersections in $\P^1$, we write
$I_{\pi_1(\X)} = \langle F_1 \rangle \subseteq K[X_0,X_1]$
and $I_{\pi_2(\X)} = \langle F_2\rangle\subseteq K[Y_0,Y_1]$,
where $F_1$ and $F_2$ are bihomogeneous.
Then $\frac{\partial(F_1,F_2)}{\partial(x_1,y_1)} =
\frac{\partial F_1}{\partial x_1}\cdot
\frac{\partial F_2}{\partial y_1} \in \vartheta_\X$.
Note that $\frac{\partial F_1}{\partial x_1}(Q_i) \ne 0$ for
all $Q_i\in \pi_1(\X)$ and
$\frac{\partial F_2}{\partial y_1}(R_j) \ne 0$ for
all $R_j\in \pi_2(\X)$. Thus the element
$\frac{\partial(F_1,F_2)}{\partial(x_1,y_1)}$
is a non-zerodivisor of~$R_\X$.

For proving~(d), suppose that
$\HF_{\vartheta_\X}(i,j)= \HF_{\vartheta_\X}(i+1,j)$.
Because $x_0$ is a non-zerodivisor of $R_\X$,
the multiplication map
$(\vartheta_\X)_{i,j}
\stackrel{\mu_{x_0}}{\longrightarrow}
(\vartheta_\X)_{i+1,j}$
is an isomorphism of $K$-vector spaces.
So, we have
$(\vartheta_\X)_{i+1,j}=x_0\cdot(\vartheta_\X)_{i,j}$.
Obviously, $x_0\cdot(\vartheta_\X)_{i+1,j}\subseteq
(\vartheta_\X)_{i+2,j}$.
For the other inclusion, let
$f \in (\vartheta_\X)_{i+2,j}\setminus\{0\}$.
Since $i\ge i_0$, we may write $f=x_0f_0+x_1f_1$
where $f_0,f_1\in (\vartheta_\X)_{i+1,j}$.
We write $f_k=x_0g_k \in x_0\cdot(\vartheta_\X)_{i,j}$
for $k=0,1$. This implies
$$
f=x_0f_0+ x_1f_1
=x_0(x_0g_0+x_1g_1)\in x_0\cdot(\vartheta_\X)_{i+1,j}.
$$
Hence $x_0\cdot(\vartheta_\X)_{i+1,j}=
(\vartheta_\X)_{i+2,j}$, and consequently
$\HF_{\vartheta_\X}(i+1,j)= \HF_{\vartheta_\X}(i+2,j)$.

Finally, claim (e) follows from the preceding proposition.
\end{proof}

\begin{example}
Let $i\ge 0$, let $Q_i = R_i = [1:i] \in \P^1$, and let
$P_{ij}$ denote the point $Q_i\times R_j$ in~$\popo$.
We let $\X$ be the set of points
$\X =\{P_{11},P_{12},P_{13},P_{21},P_{22}\}$
in~$\popo$.
Then $\X$ is ACM, we have $r=2$ and $t=3$,
and the Hilbert function of $\vartheta_\X$ is
$$
\HF_{\vartheta_\X} =
\left[ \begin{smallmatrix}
  0 & 0 & 0 & 0 & 1 & 1 & \dots\\
  0 & 0 & 1 & 2 & 3 & 3 & \dots\\
  0 & 1 & 3 & 4 & 5 & 5 & \dots\\
  0 & 1 & 3 & 4 & 5 & 5 & \dots\\
  0 & 1 & 3 & 4 & 5 & 5 & \dots\\
  0 & 1 & 3 & 4 & 5 & 5 & \dots \\
\svdots & \svdots & \svdots &\svdots &
\svdots &\svdots &  \sddots
\end{smallmatrix}\right].
$$
In this case we see that $\HF_{\vartheta_\X}(i,j)=5$
for all $(i,j) \succeq (2r-2,2t-2)= (2,4)$.
\end{example}

%
%

\bigbreak
\section{The Cayley-Bacharach Property}

In this section we consider sets of reduced points in~$\popo$
having the Cayley-Bacharach property.

First we recall this property for a set of reduced points
$\X=\{P_1,\dots,P_s\}$ in~$\P^n$. Let $r_\X$ be the regularity
index of $\X$, i.e., the least degree such that the Hilbert
function of $\X$ equals the degree of~$\X$.
The set $\X$ is called a {\it Cayley-Bacharach scheme}
if every hypersurface of degree $r_\X-1$ which contains all
but one point of $\X$ must contain all points of~$\X$.

Notice that a set of distinct points $\X\subseteq \P^n$
is a Cayley-Bacharach scheme if and only if
$\HF_{\X\setminus \{P_{j}\}}$ does not depend on
the choice of $j$. Moreover, one can detect a Cayley-Bacharach
scheme (especially, a Cayley-Bacharach scheme
being a complete intersection) by looking at
a particular homogeneous component of its K\"{a}hler different
(see \cite[Lemma~3.7]{KLL15} and \cite[Thm.~5.6]{KL16}).

In Section \ref{CI}, Propositions~\ref{indip1} and~\ref{indip2}
we showed that the Hilbert function of the K\"{a}hler
differential module for subschemes obtained by reducing
by one the multiplicity of one point $P_{ij}$
in an equimultiple fat point scheme in~$\popo$ supported
on a complete intersection does not depend on the choice of~$(i,j)$.
For reduced schemes, this leads us to examine the
Cayley-Bacharach property in~$\popo$ which is defined
as follows.

\begin{definition}
Let $\X$ be a set of distinct points in $\popo$.
We say that $\X$ has the {\it Cayley-Bacharach property}
if the Hilbert function of $\X \setminus \{P_{ij}\}$
is independent of the choice of $P_{ij}\in \X$.
\end{definition}

\begin{remark}\label{remS5.3}
For any finite set $\Sigma\subseteq \mathbb{N}^2$ we set
$$
D_{\Sigma} = {\textstyle\bigcup\limits_{(i,j)\in\Sigma}}
\big\{\, (k,l)\in \mathbb{N}^2 \,\mid\, (k,l)\succeq(i,j)\,\big\}.
$$
If $\X$ is a set of distinct points in $\popo$ and $P_{ij}\in\X$,
\cite[Thm.~2.2]{GV08} shows that
$$
\HF_{\X\setminus\{P_{ij}\}}(i,j) =
\begin{cases}
\HF_{\X}(i,j) &\  \mbox{if $(i,j)\notin D_{\deg_\X(P_{ij})}$},\\
\HF_{\X}(i,j)-1 &\  \mbox{if $(i,j)\in D_{\deg_\X(P_{ij})}$}.\\
\end{cases}
$$
Thus we can say that $\X$ has the Cayley-Bacharach property
if and only if all of its points have the same minimal
separator degree.
\end{remark}

From Theorem~\ref{degP1xP1} we see that an ACM set of points
$\X\subseteq \popo$ with the associated tuples
$\alpha_\X =(\alpha_1,\dots,\alpha_r)$ and
$\beta_\X =(\beta_1,\dots,\beta_t)$ satisfies
$$
\deg_\X(P_{ij}) = (\beta_j-1,\alpha_i-1)
$$
for all $P_{ij}\in\X$. So, Remark~\ref{remS5.3} yields that $\X$
has the Cayley-Bacharach property if and only if $\alpha_1 =
\dots=\alpha_{r}=t$ and $\beta_1=\cdots=\beta_{t}=r$. By
Theorem~\ref{pnmsepfrombetti} (or~\cite[Thm.~4.1]{GV08}),
this is also equivalent to the fact that~$\X$ is a complete
intersection.

\begin{proposition} \label{CBCI}
Let $\X$ be an ACM set of reduced points in $\popo$,
and let $r = \#\pi_1(\X)$ and $t=\#\pi_2(\X)$.
The following statements are equivalent:
\begin{enumerate}
\item[(a)] $\X$ has the Cayley-Bacharach property.
  
\item[(b)] $\X$ is a complete intersection.
  
\item[(c)]
  For every point $P_{ij}\in\X$, the K\"{a}hler different
  $\vartheta_\X$ contains no separator for $P_{ij}$ of degree
  $\prec~(2r-2,2t-2)$.
\end{enumerate}
If one of these conditions is satisfied, the Hilbert function
of $\vartheta_\X$ is given by
$$
\HF_{\vartheta_\X}(i,j) = \HF_\X(i-r+1, j-t+1)
$$
for all $(i,j)\in\N^2$.
\end{proposition}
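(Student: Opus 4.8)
The plan is to establish the cycle of implications and then read off the Hilbert function from the complete intersection case. The equivalence of (a) and (b) is already contained in the discussion preceding the proposition: by Theorem~\ref{degP1xP1} an ACM set satisfies $\deg_\X(P_{ij})=(\beta_j-1,\alpha_i-1)$, by Remark~\ref{remS5.3} the Cayley-Bacharach property is equivalent to all these degree tuples being equal, and by Theorem~\ref{pnmsepfrombetti} this forces $\alpha_1=\cdots=\alpha_r=t$ and $\beta_1=\cdots=\beta_t=r$, i.e. $\X$ is a $CI(r,t)$. Hence I would only need to supply (b)$\Rightarrow$(c), (c)$\Rightarrow$(b), and the formula for $\HF_{\vartheta_\X}$.

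First I would analyze the complete intersection case. Writing $I_\X=\langle F_1,F_2\rangle$ with $F_1\in K[X_0,X_1]$ of degree $(r,0)$ and $F_2\in K[Y_0,Y_1]$ of degree $(0,t)$, I note that $F_1$ does not involve $Y_1$ and $F_2$ does not involve $X_1$, so the Jacobian matrix of $\{F_1,F_2\}$ with respect to $\{x_1,y_1\}$ is diagonal. Its only $2$-minor is $g:=\tfrac{\partial F_1}{\partial x_1}\cdot\tfrac{\partial F_2}{\partial y_1}$, a bihomogeneous element of degree $(r-1,t-1)$. Thus $\vartheta_\X=\langle\overline{g}\rangle$ is principal, and $\overline{g}$ is exactly the non-zerodivisor exhibited in the proof of Proposition~\ref{HFofThetaForPoints}(c). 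Consequently multiplication by $\overline{g}$ identifies $R_\X(-(r-1),-(t-1))$ with $\vartheta_\X$ as bigraded $R_\X$-modules, which immediately gives $\HF_{\vartheta_\X}(i,j)=\HF_\X(i-r+1,j-t+1)$ for all $(i,j)\in\N^2$.

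For (b)$\Rightarrow$(c) I would reuse this principal generator. If $F\in\vartheta_\X$ is a separator of $P_{ij}$, write $F=\overline{g}\,h$ with $h\in R_\X$. Since $g(P_{kl})\ne 0$ for every point of $\X$ (the non-vanishing recorded in Proposition~\ref{HFofThetaForPoints}(c)), the vanishing behaviour of $F$ transfers verbatim to $h$, so $h$ is itself a separator of $P_{ij}$. By Remark~\ref{remS5.3} a separator of $P_{ij}$ exists only in degrees $\succeq\deg_\X(P_{ij})=(r-1,t-1)$, whence $\deg h\succeq(r-1,t-1)$ and $\deg F=\deg g+\deg h\succeq(2r-2,2t-2)$. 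Therefore $\vartheta_\X$ contains no separator of degree $\prec(2r-2,2t-2)$.

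For the reverse direction I would argue contrapositively. Assuming $\X$ lacks the Cayley-Bacharach property, by (a)$\Leftrightarrow$(b) it is not a complete intersection, so by Theorem~\ref{degP1xP1} some point $P_{ij}$ has $\deg_\X(P_{ij})=(\beta_j-1,\alpha_i-1)\prec(r-1,t-1)$; here I use that $\alpha_i\le t$ and $\beta_j\le r$ always hold (the number of points on a ruling through $P_{ij}$ is bounded by $t$, resp.\ $r$), so the tuple is $\preceq(r-1,t-1)$ and is strictly smaller precisely because the property fails at $P_{ij}$. Taking a minimal separator $F_{ij}$ of $P_{ij}$, the element $\overline{F}_{ij}^{2}$ lies in $\vartheta_\X$ by the determinant computation in the proof of Proposition~\ref{propSec5.12}(b), and it is again a separator of $P_{ij}$, now of degree $2\deg_\X(P_{ij})\prec(2r-2,2t-2)$, contradicting (c). I expect the main obstacle to be the bookkeeping in (b)$\Rightarrow$(c): verifying that the factorization $F=\overline{g}\,h$ genuinely transfers the separator property to $h$, and that the principle ``separator degrees are bounded below by the minimal separator degree'' is applied correctly in the two-dimensional partial order $\preceq$, rather than any single hard computation.
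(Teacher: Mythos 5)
Your proposal is correct and follows essentially the same route as the paper: the equivalence of (a) and (b) from the discussion via Theorem~\ref{degP1xP1} and Remark~\ref{remS5.3}, the implication (b)$\Rightarrow$(c) by factoring a separator through the principal generator $\frac{\partial(F_1,F_2)}{\partial(x_1,y_1)}$ of $\vartheta_\X$, and (c)$\Rightarrow$(b) by producing the separator $\overline{F}_{ij}^2$ of degree $\prec(2r-2,2t-2)$ as in Proposition~\ref{propSec5.12}. Your explicit derivation of $\HF_{\vartheta_\X}(i,j)=\HF_\X(i-r+1,j-t+1)$ from multiplication by the non-zerodivisor generator is a welcome addition, as the paper leaves that step implicit.
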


\begin{proof}
The equivalence of (a) and (b) follows from the above argument.
So, it suffices to show that these claims are equivalent to~(c).
Suppose that $\X$ is a $CI(d_1,d_2)$.
We have $d_1 = r$, $d_2 =t$ and $I_\X = \langle F_1,F_2\rangle$
with $F_1 \in S_{d_1,0}$ and $F_2 \in S_{0,d_2}$.
Then we get $\vartheta_\X = \langle \frac{\partial(F_1,F_2)}
{\partial(x_1,y_1)} \rangle$. As in the proof
Proposition~\ref{HFofThetaForPoints}.c,
the element $\frac{\partial(F_1,F_2)}{\partial(x_1,y_1)}$ is
a non-zerodivisor of~$R_\X$.
Moreover, every point $P_{ij}\in\X$ has degree
$\deg_\X(P_{ij})=(r-1,t-1)$. If $\vartheta_\X$ contains
a separator $\overline{F}_{ij}$ for some $P_{ij}$
of degree $\prec~(2r-2,2t-2)$.
Then $F_{ij} = \frac{\partial(F_1,F_2)}{\partial(X_1,X_1)}
\cdot H_{ij}$ with $\deg(H_{ij}) \prec~(r-1,t-1)$.
But in this case $H_{ij}$ is also a separator for $P_{ij}$,
and so $\deg_\X(P_{ij})\prec (r-1,t-1)$, a contradiction.

Conversely, suppose that $\vartheta_\X$ contains no separator
for $P_{ij}$ of degree $\prec~(2r-2,2t-2)$ for all $P_{ij}\in\X$
and that $\X$ does not have the Cayley-Bacharach property.
Then there is a point $P_{ij}\in\X$
such that $\deg_\X(P_{ij})\prec (r-1,t-1)$. Let $F_{ij}$ be a
minimal separator for $P_{ij}$ of degree $(i,j)\prec (r-1,t-1)$.
As in the proof of Proposition~\ref{propSec5.12}, we have
$\overline{F}^2_{ij} \in \vartheta_\X$ and
$\deg(\overline{F}^2_{ij}) = (2i,2j)\prec~(2r-2,2t-2)$.
Furthermore, $\overline{F}^2_{ij}$ is also a separator
for $P_{ij}$.
This is a contradiction.
\end{proof}

As an immediate consequence of the previous proposition
and~\cite[Thm.~4.2]{GV04}, the first difference
function of the Hilbert function of the K\"{a}hler different
of a complete intersection $\X$ in~$\popo$ can be
described explicitly as in our next corollary.

\begin{corollary}
Suppose $\X$ is a $CI(d_1,d_2)$ in~$\popo$
with $d_1\leq d_2$. Then the difference function of
$\HF_{\vartheta_\X}$ is given by
\begin{small}
$$
\left[
\begin{array}{rrrrr rrrrr rrrrr rrrrr rrr}

     {d_{1}\!-\!1} \left\{
     \begin{array}{cccccccc}    
       0      & \dots  & \dots &0  \\
       \vdots & \ddots &\ddots &    \\
       0      & \dots  & \dots &0   \\
     \end{array}\right.
       \begin{array}{cccccc}     
        0      & 0    &\dots  & 0 \\
               &\ddots&\ddots &   \\
        0      & 0    &\dots  & 0  \\
       \end{array}
         \begin{array}{cccccccc}  
          0 &\dots&\dots&0 &\dots\\
            & \ddots & \ddots &  &    \\
          0 &\dots&\dots& 0 &\dots \\
         \end{array}
    \\
     \underbrace{
     d_1\,\left\{
     \begin{array}{cccccccc}     
       0      & \dots  &\dots  &0  \\
       \vdots & \ddots &\ddots &    \\
       0      & \dots  &\dots  &0   \\
     \end{array}\right.
      }_{d_2-1}
    \underbrace{
       \begin{array}{cccccc}     
       1     & 1    & \dots &  1 \\
             &\ddots& \ddots &    \\
       1     &1     & \dots & 1  \\
       \end{array}
       }_{d_2}
         \begin{array}{cccccccc}  
          0 &\dots&\dots&0 &\dots\\
            & \ddots & \ddots &  &    \\
          0 &\dots&\dots& 0 &\dots \\
         \end{array}
    \\
     \left.
     \begin{array}{cccccccc}     
       0      & \dots  &\dots  &0  \\
       \vdots &        &       &\vdots
     \end{array}
     \right.
       \begin{array}{cccccc}     
        \, 0  \,  & \, 0\,    & \,\dots & 0 \\
        \vdots &\vdots&      &\vdots
       \end{array}
         \begin{array}{cccccccc} 
          0 & \dots&\dots& 0 & \dots   \\
          \vdots& & &\vdots&\ddots
         \end{array}
\end{array}\right ] .
$$
\end{small}
\end{corollary}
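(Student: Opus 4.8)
The plan is to reduce everything to the Hilbert function formula for $\vartheta_\X$ established in the previous proposition, combined with the well-known Hilbert function of a complete intersection in~$\popo$. Since $\X$ is a $CI(d_1,d_2)$, we have $r=\#\pi_1(\X)=d_1$ and $t=\#\pi_2(\X)=d_2$, so Proposition~\ref{CBCI} applies and yields the closed formula
$$
\HF_{\vartheta_\X}(i,j) = \HF_\X(i-d_1+1,\, j-d_2+1)
$$
for all $(i,j)\in\N^2$. The first observation is that forming the first difference commutes with this shift of arguments: substituting the formula into the definition of $\Delta$ gives
$$
\Delta\HF_{\vartheta_\X}(i,j) = \Delta\HF_\X(i-d_1+1,\, j-d_2+1).
$$
Thus the difference table of $\HF_{\vartheta_\X}$ is obtained from that of $\HF_\X$ simply by translating it by $(d_1-1,d_2-1)$, and it remains only to identify $\Delta\HF_\X$ explicitly.

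For this I would invoke the Hilbert function of the complete intersection. By \cite[Thm.~4.2]{GV04} (equivalently, from the Koszul resolution of $I_\X=\langle F_1,F_2\rangle$ with $\deg F_1=(d_1,0)$ and $\deg F_2=(0,d_2)$), the coordinate ring $R_\X$ has
$$
\HF_\X(i,j) = \min\{i+1,d_1\}\cdot\min\{j+1,d_2\}
$$
for $(i,j)\succeq(0,0)$. Writing $\HF_\X(i,j)=f(i)g(j)$ with $f(i)=\min\{i+1,d_1\}$ and $g(j)=\min\{j+1,d_2\}$ (and $f(i)=g(j)=0$ for negative arguments), the product structure lets the difference factor as
$$
\Delta\HF_\X(i,j) = \big(f(i)-f(i-1)\big)\big(g(j)-g(j-1)\big).
$$
Since $f(i)-f(i-1)=1$ for $0\le i\le d_1-1$ and vanishes otherwise, and likewise for $g$, the difference $\Delta\HF_\X$ equals $1$ exactly on the rectangle $0\le i\le d_1-1$, $0\le j\le d_2-1$ and is zero elsewhere.

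Combining the two displays, $\Delta\HF_{\vartheta_\X}(i,j)=1$ precisely when $d_1-1\le i\le 2d_1-2$ and $d_2-1\le j\le 2d_2-2$, and is zero for all other $(i,j)$. This is exactly the $d_1\times d_2$ block of $1$'s positioned with top-left corner at $(d_1-1,d_2-1)$ and surrounded by zeros, as displayed in the corollary. No genuine difficulty arises here; the only point requiring care is the bookkeeping of the shift by $(d_1-1,d_2-1)$, to confirm that the $d_1-1$ zero rows at the top and the $d_2-1$ zero columns on the left of the figure line up correctly with the translated block.
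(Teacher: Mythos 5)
Your proposal is correct and follows essentially the route the paper intends: the paper states this corollary as an immediate consequence of Proposition~\ref{CBCI} (giving $\HF_{\vartheta_\X}(i,j)=\HF_\X(i-d_1+1,j-d_2+1)$) together with the known Hilbert function $\HF_\X(i,j)=\min\{i+1,d_1\}\min\{j+1,d_2\}$ of a $CI(d_1,d_2)$, and your computation simply makes explicit the factorization of the difference operator and the shift by $(d_1-1,d_2-1)$. The bookkeeping at the boundary (both sides of the shift identity vanish for arguments not in $\N^2$) is handled correctly, so the block of $1$'s lands exactly where the displayed matrix places it.
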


\newpage
%
%

\begin{thebibliography}{99}
\bibitem{AH95} J. Alexander and A. Hirschowitz,
Polynomial interpolation in several variables,
J. Alg. Geom. {\bf 4} (1995), 201--222.

\bibitem{ApCoCoA} The ApCoCoA Team,
{\em ApCoCoA: Applied Computations in Commutative Algebra},
available at \texttt{http://apcocoa.uni-passau.de}.

\bibitem{BCS97}
P. B\"urgisser, M. Clausen, and M.A. Shokrollahi, {\it Algebraic
Complexity Theory}, Grundl. der Math. Wiss. {\bf 315}, 
Springer Verlag, Berlin, 1997.

\bibitem{CGG05}
M.V. Catalisano, A.V. Geramita, and A. Gimigliano,
Higher secant varieties of the Segre varieties
$\mathbb{P}^1 \times \cdots \times \mathbb{P}^1$,
J. Pure Appl. Algebra {\bf 201} (2005), 367--380.

\bibitem{Co16} S. Cooper, G. Fatabbi, E. Guardo,
B. Harbourne, A. Lorenzini, J. Migliore, U. Nagel, A.
Seceleanu, J. Szpond, and A.\ Van Tuyl, Symbolic powers of
codimension two Cohen-Macaulay ideals, preprint 2016,
available at \texttt{arxiv:1606.00935 [math.AC]}.

\bibitem{DK99} G. de Dominicis and M.~Kreuzer,
K\"ahler differentials for points in~$\mathbb{P}^n$,
J. Pure Appl. Algebra {\bf 141} (1999), 153--173.

\bibitem{FG17} 
G. Favacchio and E. Guardo, The minimal free resolution
of fat almost complete intersections in $\mathbb{P}^1\times
\mathbb{P}^1$, Canad. J. Math. {\bf 69} (2017), 1274--1291.

\bibitem{GSS05}
L.D. Garcia, M. Stillman, and B. Sturmfels, 
Algebraic geometry of Bayesian networks, 
J. Symb. Comput. {\bf 39} (2005), 331--355.

\bibitem{GHKM01} 
D. Geiger, D. Hackerman, H. King, and C. Meek, Stratified 
exponential families: graphical models and model selection, 
Ann. Statist. {\bf 29} (2001), 505--527.

\bibitem{GMR92} 
S. Giuffrida, R. Maggioni, and A. Ragusa, On
the postulation of $0$-dimensional subschemes on a smooth quadric,
Pacific J. Math. {\bf 155} (1992), 251--282.

\bibitem{GV04}  E. Guardo and A. Van Tuyl,
Fat points in $\mathbb{P}^1 \times \mathbb{P}^1$
and their Hilbert functions,
Canad. J. Math. {\bf 56} (2004), 716--741.

\bibitem{GV08}
E. Guardo and A. Van Tuyl,
Separators of points in a multiprojective space,
Manuscripta Math. {\bf 126} (2008), 99--113.

\bibitem{GV12}
E. Guardo and A. Van Tuyl,
Separators of arithmetically Cohen-Macaulay
fat points in $\popo$,
J. Commut. Algebra {\bf 4} (2012), 255--268.

\bibitem{GV15} E. Guardo and A. Van Tuyl,
{\it Arithmetically Cohen-Macaulay sets of points in $\popo$},
Springer Briefs in Math., Springer Verlag,
Heidelberg, 2015.

\bibitem{KLL15}
M. Kreuzer, T.N.K.~Linh, and L.N.~Long,
K\"{a}hler differentials and K\"{a}hler differents
for fat point schemes,
J. Pure Appl. Algebra {\bf 219} (2015), 4479--4509.

\bibitem{KL16}
M. Kreuzer and L.N.~Long,
Characterizations of zero-dimensional complete intersections,
Beitr. Algebra Geom. {\bf 58} (2017), 93--129.

\bibitem{KR00}
M. Kreuzer and L. Robbiano,
{\it Computational Commutative Algebra 1},
Springer Verlag, Heidelberg, 2000.

\bibitem{KR05}
M. Kreuzer and L. Robbiano,
{\it Computational Commutative Algebra 2},
Springer Verlag, Heidelberg, 2005.

\bibitem{Ku86}
E. Kunz, {\it K\"{a}hler Differentials},
Adv. Lectures Math.,
Vieweg Verlag, Braunschweig, 1986.

\bibitem{Mat87}
H. Matsumura, {\it Commutative Ring Theory},
Cambridge Univ. Press,
Cambrige, 1987.

\bibitem{Nag59}
M. Nagata, On the 14-th problem of Hilbert, 
Amer.\ J.\ Math.\ {\bf 81} (1959), 766--772.

\bibitem{Pet98} 
C.S. Peterson, Quasi complete intersections,
powers of ideals, and deficiency modules, J. Algebra {\bf 204} 
(1998), 1--14.

\bibitem{SS04}
B. Strycharz-Szemberg and T. Szemberg, Remarks on the
Nagata conjecture, Serdica Math. J. {\bf 30} (2004), 405--430.

\bibitem{VT05}
A. Van Tuyl, An appendix to a paper of Catalisano, Geramita,
Gimigliano: The Hilbert function of generic sets of 2-fat
points in $\popo$, in: C. Ciliberto et al (eds.),
{\it Projective varieties with unexpected properties},
Proc. Conf. Siena 2004, de Gruyter, Berlin, 2005, pp.\ 109--112.

\bibitem{ZS60}
O. Zariski and P. Samuel, {\it Commutative Algebra, vol. II},
Univ. Series in Higher Math., Van Nostrand, Princeton, 1960.

\end {thebibliography}

\end{document}